\documentclass[11pt, reqno]{amsart}

\usepackage{amsmath, amssymb, amsthm, mathtools, amsfonts, amsbsy, amscd, mathrsfs}
\usepackage{graphicx, xcolor, bm}
\usepackage{multirow, tabularx, booktabs, array, longtable}
\usepackage{enumitem}
\usepackage{tikz-cd}
\usepackage{systeme}
\usepackage{float}
\usepackage{xurl} % Permet la césure des URL longues
\usepackage{hyperref}
\usepackage{caption}
\usepackage{capt-of}
\usepackage[numbers,sort]{natbib}
\newcommand{\emptycomment}[1]{}

\newcommand{\subheading}[1]{\medskip\noindent\textbf{#1}\enspace}

\allowdisplaybreaks

\renewcommand{\arraystretch}{1.0}
\numberwithin{equation}{section}

\newtheorem{defn}{Definition}[section]
\newtheorem{theorem}{Theorem}[section]  % <-- AJOUTEZ CETTE LIGNE
\newtheorem{proposition}{Proposition}[section]  % <-- AJOUTEZ CETTE LIGNE
\newtheorem{lemma}{Lemma}[section]  % <-- AJOUTEZ CETTE LIGNE
\newtheorem{remark}{Remark}[section]  % <-- AJOUTEZ CETTE LIGNE

\begin{document}
	\title[Non-abelian extensions of Hom-Jacobi-Jordan algebras]{Non-abelian extensions of Hom-Jacobi-Jordan algebras}
	
	\author[N. Saadaoui]{Nejib Saadaoui}

	%%%%%%%%%%%%%%%%%%%%%%%%%%%%%%%%%%%%%%%%%%%%%%%%%%%%%%%%%%%%%%%%%%%%%%%%%%
	\address{Laboratory of Mathematics and Applications LR17ES1,
		Higher Institute of Computer Science and Multimedia Gabes,
		University of Gabès, Tunisia }
	\email{najib.saadaoui@isimg.tn}
	%%%%%%%%%%%%%%%%%%%%%%%%%%%%%%%%%%%
	\maketitle

	%%%%%%%%%%%%%%%%%%%%%%%%%%%%%%%%%%%%%
	%%%%%%%%%%%%%%%%%%%%%%%%%%%%%%%%%%%%%%%%%%%%%%%%%%%%%%%%%%

	\allowdisplaybreaks
	
	\begin{abstract}
		In this paper, we develop a cohomology theory for Hom-Jacobi-Jordan
		algebras in which $2$-cocycles are pairs $(\rho,\theta)$ rather than
		single cochains, and apply it to the study of extensions. We prove that
		equivalence classes of split extensions are in one-to-one
		correspondence with the second cohomology set $H^2_{\mathrm{na}}(J,V)$.
		Extensions are described explicitly in terms of $2$-cocycles satisfying
		concrete compatibility conditions. We also establish structural results,
		including solvability, nilpotency, and a Fitting type decomposition
		for nonregular Hom-Jacobi-Jordan
		algebras.
		Finally, we classify low-dimensional Hom-Jacobi-Jordan algebras and
		describe their associated $2$-cocycles.
	\end{abstract}
	\paragraph{Keywords.}
	Hom-Jacobi-Jordan algebra, representation, non-abelian extensions, 
	cohomology, classification.
	
	\paragraph{MSC (2020).}
	17D30, 17C50, 17A30, 17B61.
	%%%%%%%%%%%%%%%%%%%%%%%%%%%%%%%%%%
	\tableofcontents
	
	\section*{Introduction}
	
	The theory of Hom-type algebras has attracted considerable attention as a natural generalization of classical algebraic structures. In particular, Hom-Lie algebras, introduced by Hartwig, Larsson, and Silvestrov in \cite{HLS}, are nonassociative algebras whose structure is governed by a twisting linear map and a modified Jacobi identity, recovering classical Lie algebras when the twisting map is the identity. Since their introduction, various structural properties of Hom-type algebras have been extensively studied, including representations, cohomology, deformations, solvability, and nilpotency.
	
	Jacobi--Jordan algebras (or mock-Lie algebras) are commutative analogues of Lie algebras, closely related to $\delta$-Jordan--Lie structures \cite{BB2, BCHM, Braiek, Burde-Fialowski2014, Kamia, MMM,Pasha}. Motivated by the development of Hom-type generalizations, Hom-Jacobi-Jordan algebras were introduced as deformations of mock-Lie algebras induced by suitable algebra morphisms \cite{BCHM2,Cyrille,Saadaoui}.
		
	In this paper, we introduce and study Hom-Jacobi-Jordan algebras with an emphasis on both their structural and cohomological properties. We first establish several fundamental structural results. In particular, we investigate solvability and nilpotency, and analyze the structure of their ideals. Moreover, using a Fitting type decomposition, we describe the structure of finite dimensional nonregular  Hom-Jacobi-Jordan algebras.
		
	The second part of the paper is devoted to the cohomology theory and extensions of Hom-Jacobi-Jordan algebras. While non-abelian extensions have been widely studied for various algebraic structures \cite{nab10, nab6, nab13}, their adaptation to the Hom-Jacobi-Jordan setting is not a trivial formal replacement. The general framework of describing extensions via a pair $(\rho, \theta)$ is inspired by analogous theories for Lie and Leibniz algebras \cite{nab10, nab13}; however, the genuinely new aspect lies in the specific compatibility conditions, which are heavily constrained by the \emph{symmetric} nature of the Hom-Jacobi-Jordan bracket and the intertwining of the twisting maps. Furthermore, this work significantly extends our previous study on abelian and quadratic extensions of Hom-Jacobi-Jordan algebras \cite{Saadaoui} by removing the assumption that the kernel is an abelian ideal. In this non-abelian context, the equivalence classes of $2$-cocycles do not form a group but rather a \emph{pointed set}, denoted $H^2_{na}(J, V)$, due to the nonlinear nature of the equivalence relation. We establish a natural bijection between this second cohomology set and the equivalence classes of split extensions.
	
	Finally, we investigate low-dimensional multiplicative Hom-Jacobi-Jordan algebras. We provide explicit classifications of decomposable and regular cases in small dimensions (up to dimension $3$), leaving the remaining indecomposable nonregular cases with nonzero nilpotent twisting maps for future work. We compute the corresponding cohomology sets, illustrating the rich interaction between structure theory and extension theory in the Hom-setting.
	
	The paper is organized as follows. In Section~\ref{sec1}, we recall basic definitions and establish structural properties. In Section~\ref{sec2}, we develop the second cohomology theory. Section~\ref{sec3} is devoted to extensions and their classification. In the final section, we present low-dimensional classifications and explicit cohomological computations.
	
	\medskip
	\noindent\textbf{Convention.} Unless stated otherwise, $\mathbb{K}$ is a field of characteristic $\neq 2$ and $\neq 3$; in Section~\ref{sec:application}, $\mathbb{K}=\mathbb{C}$.
	
	\section{Hom-Jacobi-Jordan algebras}\label{sec1}	
	In this section, we introduce Hom-Jacobi-Jordan algebras, 
	establish solvability and nilpotency properties extending a 
	classical nilpotency result to the Hom-setting, and show that 
	every finite-dimensional nonregular Hom-Jacobi-Jordan algebra $M$ 
	decomposes as $M = J \oplus V$, with $J$ a Hom-Jacobi-Jordan algebra and $V$ an 
	ideal of $M$.
	\begin{defn}\label{def:HJJ}\cite{Cyrille}
		A \emph{Hom-Jacobi-Jordan algebra} (HJJ algebra, for short) is a triple 
		$(J, [\cdot,\cdot], \alpha)$ consisting of a vector space $J$, 
		a symmetric bilinear map
		$[\cdot,\cdot] \colon J \times J \to J$,
		and a linear map $\alpha \colon J \to J$, satisfying
		\begin{align}
			\alpha([x,y]) &= [\alpha(x), \alpha(y)], 
			\label{def:multiplicative}\\
			[\alpha(x), [y,z]] + [\alpha(y), [z,x]] 
			+ [\alpha(z), [x,y]] &= 0,
			\label{def:jacobi}
		\end{align}
		for all $x, y, z \in J$. When $\alpha = \mathrm{id}$, 
		one recovers the classical Jacobi-Jordan algebras.
	\end{defn}

	Unless otherwise specified, we assume throughout this paper that the twisting map $\alpha$ is nonzero.
	
	%%%%%%%%%%%%%%%%%%%%%%%%%%%%%%%%%%%%%%%%%%%%%%%%%	
	\begin{defn}\label{def:types}
		A Hom-Jacobi-Jordan algebra $(J, [\cdot,\cdot], \alpha)$ 
		is said to be:
		\begin{enumerate}[label=(\arabic*)]
			\item an \emph{abelian Hom-Jacobi-Jordan algebra} if 
			$[\cdot,\cdot]$ vanishes identically on $J$;
			
		\item a \emph{regular Hom-Jacobi-Jordan algebra} if the twisting map $\alpha$ is bijective. Otherwise, $(J,[\cdot,\cdot],\alpha)$ is called \emph{nonregular}.
			
			%\item a \emph{perfect Hom-Jacobi-Jordan algebra} if 
		%	$J^2 := [J,J] = J$. Otherwise, 
			%$(J,[\cdot,\cdot],\alpha)$ is called 
			%\emph{non-perfect}.	
		\end{enumerate}
	\end{defn}
	%%%%%%%%%%%%%%%%%%%%%%%%%%%%%%%%%%%%%%%
	The following result shows that every regular 
	Hom-Jacobi-Jordan algebra gives rise to an 
	underlying classical Jacobi-Jordan algebra.
	\begin{proposition}\label{prop:Jordan}\cite{Cyrille}
		Let $(J, [\cdot,\cdot], \alpha)$ be a regular 
		Hom-Jacobi-Jordan algebra. Then the bracket
		\begin{equation}\label{eq:twisted-bracket}
			[x, y]_{\alpha^{-1}} := \alpha^{-1}([x,y]),
			\quad \text{for all } x, y \in J,
		\end{equation}
		defines a Jacobi-Jordan algebra structure on $J$.
	\end{proposition}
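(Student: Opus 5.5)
The plan is to check the two defining properties of a Jacobi-Jordan algebra for the new bracket $[x,y]_{\alpha^{-1}}=\alpha^{-1}([x,y])$: symmetry and the Jacobi identity (the case $\alpha=\mathrm{id}$ of \eqref{def:jacobi}). Symmetry and bilinearity come for free. The bracket $[\cdot,\cdot]$ is symmetric and bilinear, and $\alpha^{-1}$ is linear, so $[\cdot,\cdot]_{\alpha^{-1}}$ is symmetric and bilinear as well.

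The key preliminary step is to note that $\alpha^{-1}$ is also multiplicative. Applying \eqref{def:multiplicative} to $\alpha^{-1}(x)$ and $\alpha^{-1}(y)$ gives $\alpha([\alpha^{-1}(x),\alpha^{-1}(y)])=[x,y]$. Applying $\alpha^{-1}$ to both sides then yields
\[
\alpha^{-1}([x,y])=[\alpha^{-1}(x),\alpha^{-1}(y)].
\]

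Next, for the Jacobi identity, I will compute
\[
[x,[y,z]_{\alpha^{-1}}]_{\alpha^{-1}}
=\alpha^{-1}\bigl([x,\alpha^{-1}([y,z])]\bigr)
=\alpha^{-2}\bigl([\alpha(x),[y,z]]\bigr).
\]
The second equality uses the multiplicativity of $\alpha^{-1}$: write $x=\alpha^{-1}(\alpha(x))$, so that $[x,\alpha^{-1}([y,z])]=\alpha^{-1}([\alpha(x),[y,z]])$. Summing cyclically over $(x,y,z)$ and pulling out the linear map $\alpha^{-2}$ gives $\alpha^{-2}$ applied to the left-hand side of \eqref{def:jacobi}. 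That expression vanishes, which proves the identity.

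There is no real obstacle here. The only point needing care is the multiplicativity of $\alpha^{-1}$, and with it, placing the factor $\alpha$ on the correct argument so that the expression matches \eqref{def:jacobi} exactly.
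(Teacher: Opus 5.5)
Your proof is correct and complete. The paper gives no argument for this proposition and simply quotes it from the literature; your verification is the standard proof of this fact. You first show that $\alpha^{-1}$ is multiplicative, then rewrite $[x,[y,z]_{\alpha^{-1}}]_{\alpha^{-1}}=\alpha^{-2}([\alpha(x),[y,z]])$, so the cyclic sum is $\alpha^{-2}$ applied to the left-hand side of \eqref{def:jacobi}, which vanishes.
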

	%%%%%%%%%%%%%%%%%%%%%%%%%%%%%%%%%%%%%%%%%%%%%%%%%%%%%%%%%%	
	\begin{defn}\label{def:homo}
		A \emph{homomorphism of Hom-Jacobi-Jordan algebras}
		\[
		\phi \colon (J,[\cdot,\cdot],\alpha) \longrightarrow (J',[\cdot,\cdot]',\alpha')
		\]
		is a linear map $\phi : J \to J'$ such that
		\begin{align*}
			\phi \circ \alpha &= \alpha' \circ \phi, \\
			\phi([x,y]) &= [\phi(x),\phi(y)]',
		\end{align*}
		for all $x,y \in J$.
		
		The Hom-Jacobi-Jordan algebras $(J,[\cdot,\cdot],\alpha)$ and
		$(J',[\cdot,\cdot]',\alpha')$ are said to be \emph{isomorphic} if there
		exists a bijective homomorphism between them.
	\end{defn}

	%%%%%%%%%%%%%%%%%%%%%%%%%%%%%%%%%%%%%%%%%%%
	\begin{defn}\label{def:ideal}
		Let $(J, [\cdot,\cdot], \alpha)$ be a Hom-Jacobi-Jordan algebra.
		
		\begin{enumerate}[label=(\arabic*)]
			\item A linear subspace $H \subseteq J$ is called a 
			\emph{Hom-subalgebra} of $J$ if
			\[
			[H,H] \subseteq H \quad \text{and} \quad \alpha(H) \subseteq H.
			\]
			
			\item  A linear subspace $I \subseteq J$ is called an \emph{ideal} 
			of $J$ if
			\[
			[I,J] \subseteq I \quad \text{and} \quad \alpha(I) \subseteq  I.
			\]
			
			\item An ideal $I$ is called an \emph{abelian ideal} of $J$ if 
			$[I,I] = \{0\}$, that is, the bracket vanishes identically on $I$.
		\end{enumerate}
	\end{defn}
	%%%%%%%%%%%%%%%%%%%%%%%%%%%%%%%%%%%%%%%%%%%%%%%%%%
	\begin{defn}\label{def:solvable-nilpotent}
		Let $(J, [\cdot,\cdot], \alpha)$ be a Hom-Jacobi-Jordan algebra.
		\begin{enumerate}[label=(\roman*)]
			\item The \emph{derived series} of $J$ is defined by
			\[
			D^0(J) = J, \qquad 
			D^{k+1}(J) = [D^k(J), D^k(J)], \quad k \geq 0.
			\]
			\item The \emph{lower central series} of $J$ is defined by
			\[
			C^0(J) = J, \qquad 
			C^{k+1}(J) = [J, C^k(J)], \quad k \geq 0.
			\]
		\end{enumerate}
		The Hom-Jacobi-Jordan algebra $J$ is said to be \emph{solvable} 
		(resp.\ \emph{nilpotent}) if there exists an integer 
		$k \geq 0$ such that
		\[
		D^{k+1}(J) = \{0\} 
		\quad \bigl(\text{resp.}\ C^{k+1}(J) = \{0\}\bigr).
		\]
	\end{defn}
	
	\begin{remark}\label{rem:inclusion}
		It is easy to verify that $D^k(J) \subseteq C^k(J)$ 
		for all $k \geq 0$.
	\end{remark}
	
	As a direct consequence of Remark~\ref{rem:inclusion}, 
	we obtain the following result.
	
	\begin{proposition}\label{prop:nilpotent-solvable}
		Every nilpotent Hom-Jacobi-Jordan algebra is solvable.
	\end{proposition}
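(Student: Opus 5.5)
The plan is to deduce the statement directly from Remark~\ref{rem:inclusion}, and to include a short induction proving that inclusion so the argument does not rest on an unchecked claim.

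Suppose $J$ is nilpotent. By Definition~\ref{def:solvable-nilpotent}, there is an integer $k \geq 0$ with $C^{k+1}(J) = \{0\}$. Remark~\ref{rem:inclusion} applied at index $k+1$ gives $D^{k+1}(J) \subseteq C^{k+1}(J) = \{0\}$. Hence $D^{k+1}(J) = \{0\}$, so $J$ is solvable with the same integer $k$.

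The only point that needs care is the inclusion $D^k(J) \subseteq C^k(J)$. I would prove it by induction on $k$, using $C^k(J) \subseteq J$ for all $k$. For $k=0$ both sides equal $J$. Assume $D^k(J) \subseteq C^k(J)$. Since $D^k(J) \subseteq J$, bilinearity of the bracket gives
\[
D^{k+1}(J) = [D^k(J), D^k(J)] \subseteq [J, C^k(J)] = C^{k+1}(J).
\]
Here, as is standard, $[A,B]$ denotes the linear span of the brackets $[a,b]$ with $a \in A$ and $b \in B$, so the inclusion is monotone in each argument. The induction does not use the twisting map $\alpha$ or the Hom-Jacobi identity; it relies only on how the two series are defined.

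I expect no real obstacle here. The one thing to check is that both series are the plain bracket-generated ones of Definition~\ref{def:solvable-nilpotent}, with no $\alpha$-twists. That is the case, so the monotonicity argument goes through verbatim.
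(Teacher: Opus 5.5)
Your proposal is correct and follows the paper's approach: the paper derives the proposition directly from the inclusion $D^k(J)\subseteq C^k(J)$ of Remark~\ref{rem:inclusion}, applied at the index where the lower central series vanishes. Your induction proving that inclusion, which uses only $D^k(J)\subseteq J$ and monotonicity of the span-bracket of subspaces, validly supplies the step the paper calls ``easy to verify.''
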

	
	%\begin{proof}
	%	This follows directly from the inclusion 
	%	$D^k(J) \subseteq C^k(J)$ for all $k \geq 0$.
	%\end{proof}
	%%%%%%%%%%%%%%%%%%%%%%%%%%%%%%%%%%%%%%%%%%%%%%%%%%%%%%%%%%%%%%%
	\begin{proposition}\label{prop:subalgebra-ideal}
		For every $k \in \mathbb{N}$, the subspaces $D^k(J)$ and $C^k(J)$
		are Hom-subalgebras of $J$. Moreover, if $\alpha$ is surjective,
		then $D^k(J)$ and $C^k(J)$ are ideals of $J$.
	\end{proposition}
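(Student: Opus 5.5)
We argue by induction on $k$, treating the two series in parallel. Throughout, $[A,B]$ is read as the linear span of brackets $[a,b]$. The base case is trivial, since $D^0(J)=C^0(J)=J$ is both a Hom-subalgebra and an ideal. We first record that the lower central series is decreasing, $C^{k+1}(J)\subseteq C^k(J)$. This follows by induction: $C^1=[J,J]\subseteq J$, and if $C^k\subseteq C^{k-1}$ then $C^{k+1}=[J,C^k]\subseteq [J,C^{k-1}]=C^k$. The same argument gives $D^{k+1}(J)\subseteq D^k(J)$.

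For $\alpha$-invariance we use multiplicativity \eqref{def:multiplicative}. Assume $\alpha(D^k)\subseteq D^k$. Then for $x,y\in D^k$,
\[
\alpha([x,y])=[\alpha(x),\alpha(y)]\in[D^k,D^k]=D^{k+1}.
\]
Similarly, if $\alpha(C^k)\subseteq C^k$, then for $x\in J$ and $y\in C^k$,
\[
\alpha([x,y])=[\alpha(x),\alpha(y)]\in[J,C^k]=C^{k+1},
\]
since $\alpha(x)\in J$. Closure under the bracket follows from the decreasing property. We have $[D^{k+1},D^{k+1}]\subseteq[D^k,D^k]=D^{k+1}$, and $[C^{k+1},C^{k+1}]\subseteq[J,C^{k+1}]=C^{k+2}\subseteq C^{k+1}$. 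This proves that every term is a Hom-subalgebra. No use of the Hom-Jacobi identity is needed at this stage.

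For the ideal statement, note that $C^k$ is an ideal unconditionally, because $[J,C^k]=C^{k+1}\subseteq C^k$ together with the $\alpha$-invariance shown above. So the real content of the second part concerns $D^k$, and this is the step where surjectivity of $\alpha$ must enter. We show $[J,D^{k}]\subseteq D^{k}$ by induction. Given $z\in J$ and $x,y\in D^{k}$ (with $k\ge1$, writing a generator of $D^{k}$ as $[x,y]$ where $x,y\in D^{k-1}$), surjectivity lets us write $z=\alpha(w)$. The identity \eqref{def:jacobi} then gives
\[
[z,[x,y]]=[\alpha(w),[x,y]]=-[\alpha(x),[y,w]]-[\alpha(y),[w,x]].
\]
By the induction hypothesis that $D^{k-1}$ is an ideal, $[y,w],[w,x]\in D^{k-1}$. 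By $\alpha$-invariance, $\alpha(x),\alpha(y)\in D^{k-1}$. Both terms on the right therefore lie in $[D^{k-1},D^{k-1}]=D^{k}$.

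The main obstacle is exactly this use of \eqref{def:jacobi}. The twist sits on only one argument, so an arbitrary $z$ has to be rewritten as $\alpha(w)$, and the hypothesis that $\alpha$ is surjective is what makes this possible.
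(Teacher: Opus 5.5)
Your proof is correct and follows the paper's argument: induction on $k$, multiplicativity for $\alpha$-invariance, the decreasing property for closure under the bracket, and the Hom-Jacobi identity with $z=\alpha(w)$ to get $[J,D^{k}]\subseteq D^{k}$ from the hypothesis that $D^{k-1}$ is an ideal. The one difference is in your favor: the paper calls the lower central series ``analogous'' and so states the ideal property of $C^k$ only when $\alpha$ is surjective, whereas you observe that $[J,C^k]=C^{k+1}\subseteq C^k$ makes every $C^k$ an ideal with no hypothesis on $\alpha$; also, your ``$x,y\in D^{k}$'' in the last step is a slip for $x,y\in D^{k-1}$.
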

	%===============================
	
	\begin{proof}
		We prove the statements for the derived series; the lower central series is analogous. We proceed by induction on $k$.
		
		\textit{Step 1:} $D^k(J)$ is a Hom-subalgebra. The case $k=0$ is clear. If $D^k(J)$ is a Hom-subalgebra, then $[D^{k+1}(J),D^{k+1}(J)]\subseteq[D^k(J),D^k(J)]=D^{k+1}(J)$, and $\alpha([b,c])=[\alpha(b),\alpha(c)]\in D^{k+1}(J)$ for $b,c\in D^k(J)$, so $D^{k+1}(J)$ is a Hom-subalgebra.
		
		\textit{Step 2:} $D^k(J)$ is an ideal if $\alpha$ is surjective. For $a=[b,c]\in D^{k+1}(J)$ and $y=\alpha(x)\in J$, the Hom-Jacobi identity gives $[y,a]=-[\alpha(b),[c,x]]-[\alpha(c),[x,b]]\in D^{k+1}(J)$ since $D^k(J)$ is an ideal.
	\end{proof}
	%%%%%%%%%%%%%%%%%%%%%%%%%%%%%%%%%%%%%%%%%%%%%%%%%%%%%%%%%%
	\begin{lemma}\label{lem:JacobiNilpotent}\cite{Burde-Fialowski2014}
		Every finite-dimensional Jacobi-Jordan algebra is nilpotent.
	\end{lemma}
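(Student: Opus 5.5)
The plan is to show that every finite-dimensional Jacobi-Jordan algebra $J$ is a nil-algebra of bounded index (every cube vanishes), then pass from nil to nilpotent using the commutative power-associative theory. Throughout, $(J,[\cdot,\cdot])$ is commutative and satisfies the Jacobi identity, and $\operatorname{char}\mathbb{K}\neq 2,3$.

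\textbf{Step 1: cubes vanish.} Set $x=y=z$ in the Jacobi identity. This gives $3[x,[x,x]]=0$, hence $[x,[x,x]]=0$ for all $x\in J$.

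\textbf{Step 2: Jordan identity and power-associativity.} Next I would check that $J$ satisfies the Jordan identity $[[x,x],[y,x]]=[[[x,x],y],x]$; this is the standard fact that Jacobi-Jordan algebras are Jordan.
\begin{itemize}
\item Linearize $[x,[x,x]]=0$ to obtain $2[x,[x,y]]+[y,[x,x]]=0$, i.e.\ $L_x^2 y=-\tfrac12 L_{x^2}y$, where $L_x$ denotes left multiplication and $x^2=[x,x]$.
\item Apply the Jacobi identity to the triple $(x^2,x,y)$, and combine it with $[x^2,x]=0$ and the previous relation.
\end{itemize}
Once the Jordan identity holds, $J$ is a commutative power-associative algebra in characteristic $\neq 2,3$. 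Since $x^3=0$, every element is nilpotent, so $J$ is nil.

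\textbf{Step 3: nil implies nilpotent.} For a finite-dimensional Jordan nil-algebra, Albert's theorem gives nilpotency. Alternatively, one can show directly that the multiplication algebra generated by the $L_x$ is nilpotent. From $L_x^2=-\tfrac12 L_{x^2}$ one gets $L_x^3=-\tfrac12 L_{x^2}L_x$ and, iterating, $L_{x^2}L_x=L_{[x,x^2]}+\dots$. The aim is to show each $L_x$ is nilpotent and then apply an Engel-type argument to conclude that $C^k(J)=0$ for some $k$.

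\textbf{Main obstacle.} The hard step is the passage from nil to nilpotent. In the write-up I would cite the classical result (Albert, or Zhevlakov's theorem that finite-dimensional nil Jordan algebras are nilpotent, as used in \cite{Burde-Fialowski2014}) rather than reprove it. For a self-contained variant, I would first attempt an Engel-style induction: pick a maximal proper ideal, show it has codimension $1$ with $J^2$ contained in it, and induct on $\dim J$. I am not confident this induction closes without additional input from the Jordan structure theory.
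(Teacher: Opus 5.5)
Your argument is correct. It is the standard proof behind this lemma, which the paper does not prove itself but imports from \cite{Burde-Fialowski2014}: the Jacobi identity forces $[x,[x,x]]=0$ and the Jordan identity, so $J$ is a finite-dimensional nil Jordan algebra, and Albert's theorem then gives nilpotency.

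If you want a self-contained ending instead of the ideal-by-ideal induction you were unsure about, note that the Jacobi identity is exactly the operator identity $L_xL_y+L_yL_x=-L_{[x,y]}$. Taking $y=x$ and then $y=[x,x]$ gives $L_x^3=0$, so $\{L_x : x\in J\}$ is a weakly closed set of nilpotent operators, and Jacobson's theorem on such sets makes the associative algebra they generate nilpotent. Hence $C^N(J)=\{0\}$ for some $N$.
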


	The following theorem extends this result to the Hom-setting.
	\begin{theorem}\label{thm:regular-nilpotent}
		Every finite-dimensional regular  Hom-Jacobi-Jordan algebra is nilpotent.
	\end{theorem}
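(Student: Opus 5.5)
The plan is to reduce to Lemma~\ref{lem:JacobiNilpotent} via Proposition~\ref{prop:Jordan}. Since $\alpha$ is bijective, the bracket $[x,y]_{\alpha^{-1}}=\alpha^{-1}([x,y])$ makes $J$ a finite-dimensional Jacobi-Jordan algebra $J_{\alpha^{-1}}$. By Lemma~\ref{lem:JacobiNilpotent}, $J_{\alpha^{-1}}$ is nilpotent: writing $C^k_{\alpha^{-1}}(J)$ for its lower central series, there is $k$ with $C^{k+1}_{\alpha^{-1}}(J)=\{0\}$. It then remains to compare $C^k(J)$ with $C^k_{\alpha^{-1}}(J)$.

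The key step is to prove by induction that $C^k(J)=C^k_{\alpha^{-1}}(J)$ for all $k$. This follows from the claim that each $C^k(J)$ is $\alpha$-stable with $\alpha(C^k(J))=C^k(J)$. For $k=0$ the claim is clear. For the inductive step, $\alpha(C^{k+1}(J))=[\alpha(J),\alpha(C^k(J))]=[J,C^k(J)]=C^{k+1}(J)$, using multiplicativity \eqref{def:multiplicative} and the surjectivity of $\alpha$.

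Now assume $C^k_{\alpha^{-1}}(J)=C^k(J)$. Then
\[
C^{k+1}_{\alpha^{-1}}(J)=\alpha^{-1}\bigl([J,C^k(J)]\bigr)=\alpha^{-1}\bigl(C^{k+1}(J)\bigr)=C^{k+1}(J),
\]
where the last equality uses $\alpha(C^{k+1}(J))=C^{k+1}(J)$ and the injectivity of $\alpha$. Hence $C^{k+1}(J)=C^{k+1}_{\alpha^{-1}}(J)=\{0\}$, and $J$ is nilpotent in the sense of Definition~\ref{def:solvable-nilpotent}.

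The only delicate point is to check that the spans behave linearly. The set $\alpha^{-1}([J,C^k])$ is the image under the linear map $\alpha^{-1}$ of a span, and is therefore the span of the brackets $[x,y]_{\alpha^{-1}}$. This is immediate, so I expect no real obstacle beyond invoking Lemma~\ref{lem:JacobiNilpotent}.

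An alternative that avoids the equality of series altogether is to note the inclusion $C^k(J)\subseteq \alpha^{k}\bigl(C^k_{\alpha^{-1}}(J)\bigr)$ up to twisting. Since $\alpha$ is bijective, this directly gives the vanishing of $C^{k+1}(J)$.
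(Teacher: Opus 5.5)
Your proposal is correct and follows essentially the paper's route: you pass to the Jacobi--Jordan algebra $(J,[\cdot,\cdot]_{\alpha^{-1}})$ via Proposition~\ref{prop:Jordan}, invoke Lemma~\ref{lem:JacobiNilpotent}, and compare lower central series. The paper asserts $C^k(J,[\cdot,\cdot]_{\alpha^{-1}})=\alpha^{-k}(C^k(J,[\cdot,\cdot]))$, while you prove the equivalent statement $C^k_{\alpha^{-1}}(J)=C^k(J)$; the two agree because, as you show, $\alpha(C^k(J))=C^k(J)$. Your induction supplies the details the paper leaves as ``one checks by induction,'' and your vague closing ``alternative'' paragraph is unnecessary and can be dropped.
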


	\begin{proof}
		Let $(J, [\cdot,\cdot], \alpha)$ be a regular 
		Hom-Jacobi-Jordan algebra. By 
		Proposition~\ref{prop:Jordan}, the bracket
		\[
		[x, y]_{\alpha^{-1}} := \alpha^{-1}([x,y]),
		\quad \text{for all } x, y \in J,
		\]
		defines a classical Jacobi-Jordan algebra structure on $J$.
		By Lemma~\ref{lem:JacobiNilpotent}, this algebra is 
		nilpotent, so there exists $n \in \mathbb{N}$ such that
		\[
		C^n\bigl(J, [\cdot,\cdot]_{\alpha^{-1}}\bigr) = \{0\}.
		\]
		We claim that this implies 
		$C^n(J, [\cdot,\cdot]) = \{0\}$.
		Indeed, one checks by induction that
		\[
		C^k\bigl(J,[\cdot,\cdot]_{\alpha^{-1}}\bigr)
		=
		\alpha^{-k}\bigl(C^k(J,[\cdot,\cdot])\bigr),
		\quad \text{for all } k \geq 0.
		\] 
		Since $\alpha$ is bijective, 
		$\alpha^{-n}(C^n(J,[\cdot,\cdot])) = \{0\}$
		implies $C^n(J,[\cdot,\cdot]) = \{0\}$.
		Therefore, $(J, [\cdot,\cdot], \alpha)$ is nilpotent.
	\end{proof}
	%%%%%%%%%%%%%%%%%%%%%%%%%%%%%%%%%%
%	\begin{corollary}\label{cor:regular-nonperfect}
	%	Every nonzero finite-dimensional regular Hom-Jacobi-Jordan algebra is non-perfect,
	%	and therefore nilpotent by Theorem~\ref{thm:regular-nilpotent}.
	%\end{corollary}
%	\%begin{proof}
	%	If $[J,J]=J$, then $[J,J]_{\alpha^{-1}}=\alpha^{-1}([J,J])=J$, so $(J,[\cdot,\cdot]_{\alpha^{-1}})$ is perfect; but by Lemma~\ref{lem:JacobiNilpotent} it is nilpotent, hence $J=\{0\}$, a contradiction.
%	\end{proof}
	%%%%%%%%%%%%%%%%%%%%%%%%%%%%%%%%%%%%%%%%%%%%%%%%%
%\begin{lemma}\label{lem:decompostion JJ}
%	Every finite-dimensional Jacobi-Jordan algebra $(M,\brdot')$ can be decomposed as a direct sum $M = J \oplus V$, where $J$ is a Jacobi-Jordan algebra and $V$ is an ideal of $(M,\brdot')$.
%\end{lemma}
%{proof}
%	Let $M$ be a finite-dimensional Jacobi-Jordan algebra. 
%	By Lemma~\ref{lem:JacobiNilpotent}, $M$ is nilpotent, and hence solvable by Proposition~\ref{prop:nilpotent-solvable}.
	
%	Let $V = D^k(M)$, which is an ideal of $M$, and let $J$ be a vector space complement of $V$ in $M$. Let $[\cdot, \cdot]'_J$ denote the projection of the bracket $[\cdot, \cdot]'$ onto $J$ along $V$. Clearly, $(J, [\cdot, \cdot]'_J)$ is a Jacobi-Jordan algebra, and $V$ is an ideal of $M$.
%\end{proof}
%By Corollary~\ref{cor:regular-nonperfect}, the non-perfect hypothesis below is automatic when $M$ is regular; it is only a genuine restriction in the nonregular case.

\noindent\textbf{Theorem 1.2.} \textit{Every finite-dimensional nonregular Hom-Jacobi-Jordan algebra $(M, [\cdot, \cdot]_M, \alpha_M)$ decomposes as a direct sum of vector spaces}
$$ M = J \oplus K, $$
\textit{where $J$ is a regular Hom-Jacobi-Jordan algebra and $K$ is an ideal of $M$.}

\begin{proof}
	Let $(M, [\cdot, \cdot]_M, \alpha_M)$ be a finite-dimensional Hom-Jacobi-Jordan algebra, and set $m = \dim M$. By the Fitting decomposition theorem applied to $\alpha_M$,
	$$ M = J \oplus K, \quad J = \mathrm{Im}(\alpha_M^m), \quad K = \mathrm{ker}(\alpha_M^m). $$
	We verify that $K$ is an ideal of $M$. First, $\alpha_M(K) \subseteq K$ since for any $x \in K = \mathrm{ker}(\alpha_M^m)$,
	$$ \alpha_M^m(\alpha_M(x)) = \alpha_M^{m+1}(x) = \alpha_M(\alpha_M^m(x)) = 0. $$
	Second, for any $x \in K$ and $y \in M$, the multiplicativity of $\alpha_M$ implies that $[x, y]_M \in K$. Hence $K$ is an ideal of $M$.
	
	The restriction $\alpha := \alpha_M|_J$ is an automorphism of $J$. Define a bracket on $J$ by
	$$ [x, y] := \pi_J([x, y]_M), \quad x, y \in J, $$
	where $\pi_J : M \to J$ is the projection along $K$. Since $\alpha_M$ is multiplicative and $J$ is invariant under $\alpha_M$, the triple $(J, [\cdot, \cdot], \alpha)$ satisfies the multiplicativity condition and the Hom-Jacobi identity, hence it is a regular Hom-Jacobi-Jordan algebra.
\end{proof}

%===================================================

\medskip
\noindent\textbf{Notation.}\phantomsection\label{not:JV-decomposition}
Throughout this paper, by the notation $(M, [\cdot,\cdot]_M, \alpha_M)$ we mean a triple consisting of a vector space $M$ endowed with a symmetric bilinear map $[\cdot,\cdot]_M \colon M \times M \to M$ and a linear map $\alpha_M \colon M \to M$. We assume that $M$ admits a direct sum decomposition $M = J \oplus V$ into $\alpha_M$-invariant subspaces such that $[M,V]_M \subseteq V$, and we set $\alpha_J := \alpha_M|_J$ and $\beta := \alpha_M|_V$.

	Since $M = J \oplus V$, the bracket $[\cdot,\cdot]_M$ induces the following bilinear maps:
	\begin{itemize}
		\item $[\cdot,\cdot]_J \colon J \times J \to J$, the $J$-component of $[\cdot,\cdot]_M$ restricted to $J \times J$;
		\item $\theta \colon J \times J \to V$, the $V$-component of $[\cdot,\cdot]_M$ restricted to $J \times J$;
		\item $[\cdot,\cdot]_V \colon V \times V \to V$, the restriction of $[\cdot,\cdot]_M$ to $V \times V$, well defined since $[V,V]_M \subseteq [M,V]_M \subseteq V$.
	\end{itemize}
	
	Moreover, since $[M,V]_M \subseteq V$, we may define
	\[
	\rho \colon J \to \operatorname{End}(V), \qquad \rho(x)(v) := [x,v]_M.
	\]
	
	For all $x, y \in J$ and $u, v \in V$, the bracket $[\cdot,\cdot]_M$ then decomposes as
	\begin{equation}\label{eq:bracket-decomposition}
		[x+u,\,y+v]_M = [x,y]_J + \theta(x,y) + \rho(x)v + \rho(y)u + [u,v]_V =: [x+u,\,y+v]_{(\rho,\theta)}.
	\end{equation}

	%%%%%%%%%%%%%%%%%%%%%%%%%%%%%%%%%%%%%%%%%%%%%%%%%%%%%%%%%%%%%%
	%\section{The second cohomology group of Hom-Jacobi-Jordan algebras}\label{2group}
	%%%%%%%%%%%%%%%%%%%%%%%%%%%%%%%%%%%%%
	\section{Cohomology of Hom-Jacobi-Jordan 
		algebras}\label{sec2}

In this section, we develop a cohomology theory for 
Hom-Jacobi-Jordan algebras, introducing the second cohomology set 
(a group in the abelian case) and the algebraic framework used to 
classify split extensions in Section~\ref{sec3}.
	
	Throughout this section, $(J,[\cdot,\cdot],\alpha)$ and $(V,[\cdot,\cdot]_V,\beta)$ denote two Hom-Jacobi-Jordan algebras.

	\begin{defn}\label{def:kcochain}
		A \emph{$k$-cochain} of $J$ with values in $V$ is a 
		symmetric $k$-linear map $f \in S^k(J,V)$ satisfying 
		the compatibility condition
		\[
		\beta \circ f = f \circ \alpha^{\otimes k}.
		\]
		The space of all $k$-cochains is denoted by
		\begin{equation}
			C^k_{\alpha,\beta}(J,V) 
			= \bigl\{ f \in S^k(J,V) \mid 
			\beta \circ f = f \circ \alpha^{\otimes k} \bigr\}.
		\end{equation}
	\end{defn}
	\begin{remark}\label{rem:kcochain-eigenvalue}
		Let $J=\langle u\rangle$ with $\alpha(u)=au$. A $k$-cochain $f \in C^k_{\alpha,\beta}(J,V)$ is nonzero iff $a^k$ is an eigenvalue of $\beta$, with $f(u,\dots,u)$ as a corresponding eigenvector.
	\end{remark}
	%%%%%%%%%%%%%%%%%%%%%%%%%%%%%%%%%%%%%%%%%%%%%%%%%%%%%%%%%%%%%%%%%%%%%%
	Recall the notation $\rho$, $\theta$, $\beta$ introduced above (see 
	page~\pageref{not:JV-decomposition}); the following definition 
	characterizes such pairs axiomatically.
	
	%%%%%%%%%%%%%%%%%%%%%%%%%%%%%%%%%%%%%%%%%%%%%%%%%
		\begin{defn}
			A \emph{$2$-cocycle} of $J$ with values in $V$ is a pair $(\rho,\theta)$ where $\rho\colon J\to\operatorname{End}(V)$ is linear, $\theta$ satisfies
			\begin{equation}\label{def:2cohain}
				\theta\in C^2_{\alpha,\beta}(J,V),	 
			\end{equation}
			and for all $x,y,z\in J$, $u,v\in V$:
			\begin{gather}
				\rho(\alpha(x))\circ \beta
				=
				\beta\circ \rho(x), \label{def:rep1}
				\\
				\rho([x,y])\beta(v)
				=
				-\rho(\alpha(x))\rho(y)v
				-\rho(\alpha(y))\rho(x)v
				-[\theta(x,y),\beta(v)]_V,
				\label{def:rep2}
				\\
				\theta(\alpha(x),[y,z])
				+\theta(\alpha(y),[x,z])
				+\theta(\alpha(z),[x,y])
				\nonumber\\
				+\rho(\alpha(x))\theta(y,z)
				+\rho(\alpha(y))\theta(x,z)
				+\rho(\alpha(z))\theta(x,y)
				=
				0,
				\label{def:cocycle1}
				\\
				\rho(\alpha(x))[u,v]_V
				=
				-[\beta(u),\rho(x)v]_V
				-[\beta(v),\rho(x)u]_V.
				\label{def:rep3}
			\end{gather}
			The set of all $2$-cocycles of $J$ with values in $V$ is denoted by $Z^2_{\mathrm{na}}(J,V)$.
		\end{defn}
	%%%%%%%%%%%%%%%%%%%%%%%%%%%%%%%%%%%%%%%%%%%%%%%%%%%%%%%%%%%%%%
	\begin{theorem}\label{thm:structureMAY24}
		With the notation in \ref{not:JV-decomposition}, 
		$(M, [\cdot,\cdot]_M, \alpha_M)$ is a 
		Hom-Jacobi-Jordan algebra if and only if:
		\begin{enumerate}[label=(\arabic*)]
			\item $(J, [\cdot,\cdot]_J, \alpha_J)$ and $(V, [\cdot,\cdot]_V, \beta)$ are 
			Hom-Jacobi-Jordan algebras;
			\item $(\rho, \theta)$ is a $2$-cocycle of $J$ 
			with coefficients in $V$.
		\end{enumerate}
	\end{theorem}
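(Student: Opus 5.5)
The plan is a direct expansion. Write a general element of $M$ as $x+u$ with $x\in J$, $u\in V$, use the decomposition \eqref{eq:bracket-decomposition} of $[\cdot,\cdot]_M$, and write $\alpha_M(x+u)=\alpha(x)+\beta(u)$; here $\alpha=\alpha_J$, and the splitting is valid because $J$ and $V$ are $\alpha_M$-invariant. Both defining identities of Definition~\ref{def:HJJ} are multilinear. Multiplicativity \eqref{def:multiplicative} is bilinear and the Hom-Jacobi identity \eqref{def:jacobi} is trilinear. Symmetry of $[\cdot,\cdot]_M$ is automatic from the symmetry of $[\cdot,\cdot]_J$, $\theta$ and $[\cdot,\cdot]_V$ and from the symmetric form in which $\rho$ enters. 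Hence each identity on $M$ is equivalent to the collection of its restrictions to arguments lying in $J$ or in $V$. In each case I will then project onto the $J$- and $V$-components. The proof reduces to matching the resulting list of equations with conditions (1) and (2).

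For multiplicativity I take the argument types $(J,J)$, $(J,V)$ and $(V,V)$. For $x,y\in J$ we have $\alpha_M([x,y]_M)=\alpha([x,y]_J)+\beta(\theta(x,y))$, while $[\alpha x,\alpha y]_M=[\alpha x,\alpha y]_J+\theta(\alpha x,\alpha y)$. The $J$-part gives multiplicativity of $\alpha$ on $J$, and the $V$-part gives $\beta\circ\theta=\theta\circ\alpha^{\otimes 2}$, which is \eqref{def:2cohain}. For $x\in J$, $v\in V$ the identity reads $\beta(\rho(x)v)=\rho(\alpha x)\beta(v)$, which is \eqref{def:rep1}. On $(V,V)$ it is multiplicativity of $\beta$.

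For the Hom-Jacobi identity I take, by symmetry, the four types $(J,J,J)$, $(J,J,V)$, $(J,V,V)$ and $(V,V,V)$.
\begin{itemize}
\item $(V,V,V)$: this is exactly the Hom-Jacobi identity of $V$.
\item $(J,J,J)$: the $J$-component gives the Hom-Jacobi identity of $[\cdot,\cdot]_J$. The $V$-component gives \eqref{def:cocycle1}, after rewriting $[\alpha x,[y,z]_M]_M$ as $\theta(\alpha x,[y,z]_J)+\rho(\alpha x)\theta(y,z)$, and similarly for the cyclic terms, and using the symmetry $\theta(\alpha y,[z,x])=\theta(\alpha y,[x,z])$.
\item $(J,J,V)$ with $x,y\in J$, $v\in V$: the identity becomes $[\alpha x,\rho(y)v]_M+[\alpha y,\rho(x)v]_M+[\beta v,[x,y]_M]_M=0$. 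Here $[\beta v,[x,y]_J+\theta(x,y)]_M=\rho([x,y]_J)\beta v+[\beta v,\theta(x,y)]_V$, so this is precisely \eqref{def:rep2}.
\item $(J,V,V)$ with $x\in J$, $u,v\in V$: the identity reads $[\alpha x,[u,v]_V]_M+[\beta u,\rho(x)v]_V+[\beta v,\rho(x)u]_V=0$, which is \eqref{def:rep3}.
\end{itemize}
All these steps are reversible, so the equivalence holds in both directions.

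I expect the main obstacle to be bookkeeping rather than any conceptual step. The points that need care are the following. One must check that the component splits are legitimate; the $J$-component of each mixed term vanishes because $[M,V]_M\subseteq V$. One must track which arguments get twisted by $\alpha$ or $\beta$ in each cyclic term. Finally, the $(J,J,J)$ case has to reproduce exactly the argument order of \eqref{def:cocycle1}, which again uses symmetry. I would also remark that the trilinear reduction requires expanding $(x+u,y+v,z+w)$ completely, using symmetry of the identity under permuting its three arguments so that only the four listed types arise.
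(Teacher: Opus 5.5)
Your proposal is correct and takes the same route as the paper. Both expand multiplicativity on the argument types $(J,J)$, $(J,V)$, $(V,V)$ and the Hom-Jacobi identity on $(J,J,J)$, $(J,J,V)$, $(J,V,V)$, $(V,V,V)$, then separate $J$- and $V$-components using $[M,V]_M\subseteq V$ and the $\alpha_M$-invariance of $J$ and $V$. Your remark that both identities are multilinear and symmetric, so checking homogeneous argument types suffices, is what justifies the converse, which the paper only asserts by ``reversing'' the computations.
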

	\begin{proof}
		Let $x, y, z \in J$ and $u, v, w \in V$.
		
		Suppose that $(M, [\cdot,\cdot]_M, \alpha_M)$ is a Hom-Jacobi-Jordan algebra.
		From the multiplicativity condition
		\[
		\alpha_M([x+u,\, y+v]_M) = [\alpha(x)+\beta(u),\, \alpha(y)+\beta(v)]_M,
		\]
		and splitting into $J$- and $V$-components, by taking respectively 
		$(x,y)$, $(x,y)$, $(x,v)$, $(u,v)$, we obtain the following four identities:
		\[
		\alpha([x,y]) = [\alpha(x),\alpha(y)], \quad \theta(\alpha(x),\alpha(y))=\beta(\theta(x,y)), \quad
		\beta(\rho(x)v) = \rho(\alpha(x))\beta(v), \quad
		\beta([u,v]_{V}) = [\beta(u),\beta(v)]_V.
		\]
				Applying the Hom-Jacobi identity to $x, y, z \in J$ and using the decomposition $M = J \oplus V$, we obtain
		\begin{align*}
			&\underbrace{[\alpha(x),[y,z]]
				+ [\alpha(y),[x,z]]
				+ [\alpha(z),[x,y]]}_{\in\, J} \\
			&+ \underbrace{\theta(\alpha(x),[y,z])
				+ \theta(\alpha(y),[x,z])
				+ \theta(\alpha(z),[x,y])}_{\in\, V} \\
			&+ \underbrace{\rho(\alpha(x))\theta(y,z)
				+ \rho(\alpha(y))\theta(x,z)
				+ \rho(\alpha(z))\theta(x,y)}_{\in\, V}
			= 0.
		\end{align*}
		Since $J \cap V = \{0\}$, the $J$-component and the $V$-component vanish separately.
		
		Applying the Hom-Jacobi identity to $x, y \in J$ and $v \in V$ yields
		\[
		\rho([x,y])\beta(v)
		+ \rho(\alpha(x))\rho(y)v
		+ \rho(\alpha(y))\rho(x)v
		+ [\theta(x,y),\beta(v)]_V = 0.
		\]
		
		Applying the Hom-Jacobi identity to $x \in J$ and $u, v \in V$ yields
		\[
		\rho(\alpha(x))[u,v]_V 
		+ [\beta(u), \rho(x)v]_V 
		+ [\beta(v), \rho(x)u]_V = 0,
		\]
		which is condition~\eqref{def:rep3}.
		
		Finally, applying the Hom-Jacobi identity to $u, v, w \in V$ gives
		\[
		[\beta(u),[v,w]_V]_V
		+ [\beta(v),[w,u]_V]_V
		+ [\beta(w),[u,v]_V]_V
		= 0.
		\]
	This completes the verification of the necessary conditions, establishing (1) and (2).
	
	The converse follows by symmetrically reversing each of the four 
	identities above, together with the Hom-Jacobi computations, to 
	reconstruct multiplicativity and the Hom-Jacobi identity on $M$.	
	\end{proof}
	
	%%%%%%%%%%%%%%%%%%%%%%%%%%%%%%%%%%%%%%%%%%%%%%%%%%%%%%%%%%%%%%%%%%%%%%%%%%%%%%%%
	\subsection{The abelian case}\label{subsec:abelian}
	
	In this subsection, we assume that $[\cdot,\cdot]_V = 0$,
	that is, $V$ is an abelian Hom-Jacobi-Jordan algebra.
	Under this assumption, condition~\eqref{def:rep3} is 
	automatically satisfied, and the remaining conditions 
	of a $2$-cocycle reduce to:
	\begin{align}
		\rho(\alpha(x)) \circ \beta
		&= \beta \circ \rho(x),
		\label{rep1-ab}\\
		\rho([x,y])(\beta(v))
		&= -\rho(\alpha(x))\rho(y)v
		- \rho(\alpha(y))\rho(x)v,
		\label{rep2-ab}\\
		\theta(\alpha(x),[y,z])
		+ \theta(\alpha(y),[x,z])
		+ \theta(\alpha(z),[x,y])&
		\nonumber\\
		+ \rho(\alpha(x))\theta(y,z)
		+ \rho(\alpha(y))\theta(x,z)
		+ \rho(\alpha(z))\theta(x,y)
		&= 0.
		\label{cocycle-ab}
	\end{align}
	
	We now define the coboundary operators $d^1$ and $d^2$ 
	and introduce the second cohomology group 
	$H^2_{\alpha,\beta}(J,V)$ in this setting.
	%%%%%%%%%%%%%%%%%%%%%%%%%%%%%%%%%%%%%%%%%%%%%%%%%
	
	\begin{defn}\label{def:d1}
		The \emph{$1$-coboundary operator} associated with 
		the Hom-Jacobi-Jordan algebra $J$ is the map
		\[
		d^1 \colon
		C^1_{\alpha,\beta}(J,V)
		\longrightarrow
		C^2_{\alpha,\beta}(J,V),
		\qquad
		f \longmapsto d^1(f),
		\]
		defined by
		\begin{equation}\label{adjoint1}
			d^1(f)(x,y)
			= f([x,y])
			- \rho(x)f(y)
			- \rho(y)f(x),
		\end{equation}
		for all $x, y \in J$.
	\end{defn}
	
	\begin{defn}\label{def:d2}
		The \emph{$2$-coboundary operator} associated with 
		the Hom-Jacobi-Jordan algebra $J$ is the map
		\[
		d^2 \colon
		C^2_{\alpha,\beta}(J,V)
		\longrightarrow
		C^3_{\alpha,\beta}(J,V),
		\qquad
		f \longmapsto d^2(f),
		\]
		defined by
		\begin{align}\label{adjoint2}
			d^2(f)(x,y,z)
			&= f(\alpha(x),[y,z])
			+ f(\alpha(y),[x,z])
			+ f(\alpha(z),[x,y])
			\nonumber\\
			&\quad
			+ \rho(\alpha(x))f(y,z)
			+ \rho(\alpha(y))f(x,z)
			+ \rho(\alpha(z))f(x,y),
		\end{align}
		for all $x, y, z \in J$.
	\end{defn}
	\begin{remark}\label{rem:cocycle-operator}
		Thus, the explicit cocycle condition~\eqref{cocycle-ab} is equivalent to 
		$d^2(\theta) = 0$, i.e., $\theta \in \ker d^2$.
	\end{remark}
	
	%%%%%%%%%%%%%%%%%%%%%%%%%%%%%%%%%%%%%%%%%
	
	\begin{lemma}\label{lem:d1-welldefined}
	For every $f \in C^1_{\alpha,\beta}(J,V)$, we have
	\[
	\beta \circ d^1(f) = d^1(f) \circ \alpha^{\otimes 2}.
	\]
	In particular, the operator
	\(d^1:C^1_{\alpha,\beta}(J,V)\to C^2_{\alpha,\beta}(J,V)\)
	is well defined.		
	\end{lemma}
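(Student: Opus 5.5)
The plan is a direct computation: evaluate $\beta\circ d^1(f)$ on an arbitrary pair $(x,y)\in J\times J$ and rewrite each of the three terms of \eqref{adjoint1} until it becomes $d^1(f)(\alpha(x),\alpha(y))$. The ingredients are the linearity of $\beta$, the cochain condition $\beta\circ f=f\circ\alpha$ for $f\in C^1_{\alpha,\beta}(J,V)$, the multiplicativity \eqref{def:multiplicative} of $\alpha$, and the intertwining relation \eqref{rep1-ab}, namely $\beta\circ\rho(x)=\rho(\alpha(x))\circ\beta$.

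First, by linearity,
\[
\beta\bigl(d^1(f)(x,y)\bigr)=\beta\bigl(f([x,y])\bigr)-\beta\bigl(\rho(x)f(y)\bigr)-\beta\bigl(\rho(y)f(x)\bigr).
\]
For the first term, I use $\beta\circ f=f\circ\alpha$ and then \eqref{def:multiplicative}:
\[
\beta\bigl(f([x,y])\bigr)=f\bigl(\alpha([x,y])\bigr)=f\bigl([\alpha(x),\alpha(y)]\bigr).
\]
For the second term, I apply \eqref{rep1-ab} and then the cochain condition once more:
\[
\beta\bigl(\rho(x)f(y)\bigr)=\rho(\alpha(x))\beta\bigl(f(y)\bigr)=\rho(\alpha(x))f(\alpha(y)).
\]
The third term is handled in the same way with $x$ and $y$ interchanged. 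Collecting the three terms gives exactly $d^1(f)(\alpha(x),\alpha(y))$, which is the required identity $\beta\circ d^1(f)=d^1(f)\circ\alpha^{\otimes 2}$.

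To conclude that $d^1$ is well defined, I also need $d^1(f)$ to be symmetric and bilinear. Bilinearity is immediate. Symmetry follows because $[\cdot,\cdot]$ is symmetric and the two $\rho$-terms are interchanged when $x$ and $y$ are swapped. Hence $d^1(f)\in C^2_{\alpha,\beta}(J,V)$.

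I expect no step to be a serious obstacle. The only point needing care is the second and third terms, where \eqref{rep1-ab} must be applied in the direction $\beta\circ\rho(x)=\rho(\alpha(x))\circ\beta$. This relation is part of the $2$-cocycle data (condition \eqref{def:rep1}), so it is available in the standing setting.
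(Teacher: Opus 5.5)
Your proposal is correct and follows the same route as the paper. The paper's one-line proof cites exactly the cochain condition on $f$, condition~\eqref{def:rep1}, and the multiplicativity of $\alpha$; your term-by-term computation spells this out, correctly using $\beta\circ f=f\circ\alpha$ for a $1$-cochain, and your extra check that $d^1(f)$ is symmetric, which is needed for $d^1(f)\in C^2_{\alpha,\beta}(J,V)$, is a detail the paper leaves implicit.
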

	
	\begin{proof}
	This follows directly from the compatibility condition
	$f \circ \alpha^{\otimes 2} = \beta \circ f$, condition~\eqref{def:rep1}, 
	and the multiplicativity of $\alpha$.	
	\end{proof}

	\begin{theorem}\label{thm:d2d1=0}
		The coboundary operators satisfy $d^2 \circ d^1 = 0$.
	\end{theorem}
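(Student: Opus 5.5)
The proof is a direct expansion of $d^2(d^1(f))(x,y,z)$ for $f\in C^1_{\alpha,\beta}(J,V)$ and arbitrary $x,y,z\in J$. The resulting terms split into three groups, and each group vanishes by a different identity. Throughout I use that the bracket is symmetric, so $[x,z]=[z,x]$ and the three summands in each half of \eqref{adjoint2} can be read as a cyclic sum over $(x,y,z)$.

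Set $g=d^1(f)$; by Lemma~\ref{lem:d1-welldefined}, $g\in C^2_{\alpha,\beta}(J,V)$, so $d^2(g)$ makes sense. Substituting \eqref{adjoint1} into \eqref{adjoint2}, the first half of $d^2(g)$ becomes
\[
\sum_{\mathrm{cyc}} \Bigl( f([\alpha(x),[y,z]]) - \rho(\alpha(x))f([y,z]) - \rho([y,z])f(\alpha(x)) \Bigr).
\]
The second half becomes
\[
\sum_{\mathrm{cyc}} \Bigl( \rho(\alpha(x))f([y,z]) - \rho(\alpha(x))\rho(y)f(z) - \rho(\alpha(x))\rho(z)f(y) \Bigr).
\]

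First, the terms $f([\alpha(x),[y,z]])$ sum to $f$ applied to the left side of the Hom-Jacobi identity \eqref{def:jacobi}, hence vanish by linearity of $f$. Second, the terms $\pm\rho(\alpha(x))f([y,z])$ cancel directly between the two halves. What remains is
\[
-\sum_{\mathrm{cyc}}\rho([y,z])f(\alpha(x)) \;-\; \sum_{\mathrm{cyc}}\bigl(\rho(\alpha(x))\rho(y)f(z)+\rho(\alpha(x))\rho(z)f(y)\bigr).
\]

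For this remaining part, I use the cochain condition $f\circ\alpha=\beta\circ f$ to rewrite $f(\alpha(x))=\beta(f(x))$. Then I apply \eqref{rep2-ab} with $v=f(x)$:
\[
\rho([y,z])\beta(f(x)) = -\rho(\alpha(y))\rho(z)f(x)-\rho(\alpha(z))\rho(y)f(x).
\]
Summing cyclically, the first sum becomes $+\sum_{\sigma}\rho(\alpha(a))\rho(b)f(c)$, where $\sigma$ runs over all six orderings $(a,b,c)$ of $(x,y,z)$. The second sum is exactly $-\sum_\sigma \rho(\alpha(a))\rho(b)f(c)$ over the same six orderings, so the two cancel. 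Hence $d^2(d^1(f))=0$.

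The only real obstacle is bookkeeping: one must check that the six terms produced by \eqref{rep2-ab} match the six terms of the second group exactly, each ordered triple appearing once with the opposite sign. I would verify this by listing both sets of triples $(a,b,c)$ explicitly. Note that \eqref{rep1-ab} is needed only implicitly, through Lemma~\ref{lem:d1-welldefined}. In the non-abelian setting the extra term $[\theta(x,y),\beta(v)]_V$ in \eqref{def:rep2} would obstruct this argument, which is why the statement lives in the abelian subsection.
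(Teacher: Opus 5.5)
Your proof is correct and follows the paper's route: expand $d^2(d^1f)$, kill the $f([\alpha(x),[y,z]])$ terms by the Hom-Jacobi identity, cancel the $\pm\rho(\alpha(x))f([y,z])$ terms, and rewrite $\rho([y,z])f(\alpha(x))$ using $f\circ\alpha=\beta\circ f$ together with \eqref{rep2-ab}. Your final bookkeeping, in which two copies of the full six-ordering sum appear with opposite signs, is more accurate than the paper's displayed intermediate expression: as printed, its six terms involve six distinct orderings and do not literally cancel pairwise.
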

	
	\begin{proof}
		Let $f \in C^1_{\alpha,\beta}(J,V)$. 
		A direct computation shows that
		\[
		d^2(d^1f)(x,y,z) = A(x,y,z) + B(x,y,z),
		\]
		where
		\begin{align*}
			A(x,y,z)
			&= f([\alpha(x),[y,z]])
			+ f([\alpha(y),[x,z]])
			+ f([\alpha(z),[x,y]]),
		\end{align*}
		and $B(x,y,z)$ collects all terms involving $\rho$.
		By the Hom-Jacobi identity~\eqref{def:jacobi}, 
		$A(x,y,z) = 0$.
		For the terms in $B(x,y,z)$, using $f\circ\alpha^{\otimes 2}=\beta\circ f$ and~\eqref{rep2-ab} to expand 
		$\rho([y,z])f(\alpha(x))$ (and cyclically), $B(x,y,z)$ rewrites as
		\[
		-\rho(\alpha(y))\rho(z)f(x)-\rho(\alpha(z))\rho(x)f(y)-\rho(\alpha(x))\rho(y)f(z)
		+\rho(\alpha(z))\rho(y)f(x)+\rho(\alpha(x))\rho(z)f(y)+\rho(\alpha(y))\rho(x)f(z),
		\]
		where all six terms cancel pairwise. Therefore, $d^2 \circ d^1 = 0$.	
	\end{proof}
	%%%%%%%%%%%%%%%%%%%%%%%%%%%%%%%%
	\begin{defn}\label{def:H2}
		The \emph{second cohomology group} of $J$ with 
		coefficients in $V$ is defined as the quotient
		\[
		H^2_{\alpha,\beta}(J,V)
		=
		Z^2_{\alpha,\beta}(J,V)
		\,/\,
		B^2_{\alpha,\beta}(J,V),
		\]
		where
		\[
		Z^2_{\alpha,\beta}(J,V) = \ker d^2
		\]
		is the space of \emph{$2$-cocycles}, and
		\[
		B^2_{\alpha,\beta}(J,V) = \operatorname{Im} d^1
		\]
		is the space of \emph{$2$-coboundaries}.
		Note that $B^2_{\alpha,\beta}(J,V) \subseteq 
		Z^2_{\alpha,\beta}(J,V)$ by 
		Theorem~\ref{thm:d2d1=0}, so the quotient 
		is well defined.
	\end{defn}
	%%%%%%%%%%%%%%%%%%%%%%%%%%%%%%%%%%%%
	\subsection{The non-abelian case}\label{subsec:nonabelian}
	
	In this subsection, we consider the general case 
	where $[\cdot,\cdot]_V$ is not necessarily zero.
	
	Given a $2$-cocycle $(\rho,\theta)$ of $J$ with values in $V$, we write
	$[\cdot,\cdot]_{(\rho,\theta)}$ for the bracket on $J\oplus V$ defined by
	\[
	[x+u,y+v]_{(\rho,\theta)} := [x,y] + \theta(x,y) + \rho(x)v + \rho(y)u + [u,v]_V,
	\]
	which, by Theorem~\ref{thm:structureMAY24}, makes 
	$(J\oplus V,[\cdot,\cdot]_{(\rho,\theta)},\alpha+\beta)$ a 
	Hom-Jacobi-Jordan algebra.
	\begin{defn}\label{def:equivalent-cocycles}
		Two $2$-cocycles $(\rho, \theta)$ and $(\rho', \theta')$ are said to be \emph{equivalent} if there exists a $1$-cochain
		$h \in C^1_{\alpha,\beta}(J,V)$ such that the map
		\[
		\Phi_{h}:(J\oplus V,[\cdot ,\cdot ]_{(\rho, \theta)},\alpha+\beta) \longrightarrow (J\oplus V,[\cdot ,\cdot ]_{(\rho', \theta')},\alpha+\beta)
		\]
		defined by
		\[
		\Phi_{h}(x+v)=x+h(x)+v
		\]
		is an isomorphism of Hom-Jacobi-Jordan algebras. 	In this case, we write $(\rho,\theta) \sim (\rho',\theta')$.
	\end{defn}
	\begin{proposition}\label{prop:equivalence-relation}
		The relation $\sim$ is an equivalence relation on the 
		set of $2$-cocycles. We denote by 
		$H^2_{\mathrm{na}}(J,V)$ the set of equivalence 
		classes of $2$-cocycles under this relation.
	\end{proposition}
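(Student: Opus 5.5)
The plan is to check reflexivity, symmetry and transitivity directly from Definition~\ref{def:equivalent-cocycles}. The key observation is that the maps $\Phi_h$, for $h\in C^1_{\alpha,\beta}(J,V)$, are closed under composition and inversion:
\[
\Phi_{h'}\circ\Phi_h=\Phi_{h+h'},\qquad \Phi_h^{-1}=\Phi_{-h}.
\]
Indeed, $\Phi_{h'}(\Phi_h(x+v))=\Phi_{h'}(x+(h(x)+v))=x+h'(x)+h(x)+v$. In particular each $\Phi_h$ is a linear bijection of $J\oplus V$, with inverse $\Phi_{-h}$. Moreover $C^1_{\alpha,\beta}(J,V)$ is a vector space, since the condition $\beta\circ f=f\circ\alpha$ is linear in $f$; so $0$, $-h$ and $h+h'$ are again $1$-cochains.

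\textbf{Reflexivity.} Take $h=0$. Then $\Phi_0=\mathrm{id}$, which is trivially an isomorphism of $(J\oplus V,[\cdot,\cdot]_{(\rho,\theta)},\alpha+\beta)$ onto itself, so $(\rho,\theta)\sim(\rho,\theta)$.

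\textbf{Symmetry.} Suppose $\Phi_h$ is an isomorphism from the $(\rho,\theta)$-structure to the $(\rho',\theta')$-structure. The inverse of a bijective homomorphism of Hom-Jacobi-Jordan algebras is again a homomorphism. It preserves the bracket, since $\Phi_h^{-1}[a,b]'=\Phi_h^{-1}[\Phi_h\Phi_h^{-1}a,\Phi_h\Phi_h^{-1}b]'=[\Phi_h^{-1}a,\Phi_h^{-1}b]$. It commutes with the twisting maps by applying $\Phi_h^{-1}$ on both sides of $\Phi_h\circ(\alpha+\beta)=(\alpha+\beta)\circ\Phi_h$. Since $\Phi_h^{-1}=\Phi_{-h}$ with $-h\in C^1_{\alpha,\beta}(J,V)$, this gives $(\rho',\theta')\sim(\rho,\theta)$.

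\textbf{Transitivity.} If $\Phi_h$ realizes $(\rho,\theta)\sim(\rho',\theta')$ and $\Phi_{h'}$ realizes $(\rho',\theta')\sim(\rho'',\theta'')$, then $\Phi_{h'}\circ\Phi_h=\Phi_{h+h'}$. A composition of isomorphisms is an isomorphism, and $h+h'$ is a $1$-cochain, so $(\rho,\theta)\sim(\rho'',\theta'')$.

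There is no real obstacle. The only point needing care is that the inverse and the composite are again of the prescribed form $\Phi_k$ with $k$ a $1$-cochain satisfying the compatibility $\beta\circ k=k\circ\alpha$, and this is exactly the composition law above. It follows that $H^2_{\mathrm{na}}(J,V)$ is well defined as a set of classes.
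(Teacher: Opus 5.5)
Your proof is correct and follows the paper's argument, using the same witnesses $0$, $-h$ and $h+h'$ for reflexivity, symmetry and transitivity. Your composition law $\Phi_{h'}\circ\Phi_h=\Phi_{h+h'}$, $\Phi_h^{-1}=\Phi_{-h}$ makes precise what the paper only calls ``direct substitution''. It is also cleaner than the paper's remark that the conditions hold ``with opposite sign'', which is not literally true of the quadratic term $[h(x),h(y)]_V$.
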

	
	\begin{proof}
		\textit{Reflexivity}: Take $h = 0$. The isomorphism condition on $\Phi_h$ is trivially 
		satisfied, so $(\rho,\theta) \sim (\rho,\theta)$.
		
		\textit{Symmetry}: If $(\rho,\theta) \sim (\rho',\theta')$ 
		via $h \in C^1_{\alpha,\beta}(J,V)$, then 
		$(\rho',\theta') \sim (\rho,\theta)$ via $-h$, 
		since $-h \in C^1_{\alpha,\beta}(J,V)$ and the 
		conditions are satisfied with opposite sign.
		
		\textit{Transitivity}: If $(\rho,\theta) \sim (\rho',\theta')$ via $h \in C^1_{\alpha,\beta}(J,V)$ 
		and $(\rho',\theta') \sim (\rho'',\theta'')$ via $h' \in C^1_{\alpha,\beta}(J,V)$, 
		then $(\rho,\theta) \sim (\rho'',\theta'')$ via $h + h'$. 
		This follows by direct substitution, noting that $h + h' \in C^1_{\alpha,\beta}(J,V)$ 
		by linearity. 
	\end{proof}
	%%%%%%%%%%%%%%%%%%%%%%%%%%%%%%%%%%%%%%%%%%%%%%%%%%%%%%
	\begin{proposition}\label{prop:cocycle-equivalence-formulas}
		Let $(\rho, \theta)$ and $(\rho', \theta')$ be two 
		equivalent $2$-cocycles. Then there exists a $1$-cochain 
		$h \in C^1_{\alpha,\beta}(J,V)$ such that	
		\begin{align}
			\rho'(x)v
			&= \rho(x)v - [h(x), v]_V,
			\label{eq:equivalent-cocycle1}\\
			\theta'(x,y)
			&= \theta(x,y)
			+ h([x,y])
			- \rho(x)h(y)
			- \rho(y)h(x)
			+ [h(x), h(y)]_V,
			\label{eq:equivalent-cocycle2}
		\end{align}
		for all $x, y \in J$ and $v \in V$.
	\end{proposition}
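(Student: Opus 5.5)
By Definition~\ref{def:equivalent-cocycles}, there is a $1$-cochain $h\in C^1_{\alpha,\beta}(J,V)$ for which $\Phi_h(x+v)=x+h(x)+v$ is an isomorphism of Hom-Jacobi-Jordan algebras
\[
(J\oplus V,[\cdot,\cdot]_{(\rho,\theta)},\alpha+\beta)\longrightarrow(J\oplus V,[\cdot,\cdot]_{(\rho',\theta')},\alpha+\beta).
\]
We will show that this same $h$ gives \eqref{eq:equivalent-cocycle1} and \eqref{eq:equivalent-cocycle2}. Compatibility with the twisting maps holds automatically, since $\beta\circ h=h\circ\alpha$. So the content of the statement lies in the bracket condition
\[
\Phi_h\bigl([a,b]_{(\rho,\theta)}\bigr)=[\Phi_h(a),\Phi_h(b)]_{(\rho',\theta')},
\]
which we will evaluate on suitable pairs $(a,b)$ and then separate into $J$- and $V$-components, using $J\cap V=\{0\}$.

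First take $a=x\in J$ and $b=v\in V$. On the left, $[x,v]_{(\rho,\theta)}=\rho(x)v\in V$, and $\Phi_h$ fixes $V$, so the left side is $\rho(x)v$. On the right, expanding with the decomposition \eqref{eq:bracket-decomposition} gives
\[
[x+h(x),\,v]_{(\rho',\theta')}=\rho'(x)v+[h(x),v]_V .
\]
Equating the two sides yields $\rho(x)v=\rho'(x)v+[h(x),v]_V$, which is exactly \eqref{eq:equivalent-cocycle1}.

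Next take $a=x$ and $b=y$ in $J$. The left side is
\[
\Phi_h\bigl([x,y]+\theta(x,y)\bigr)=[x,y]+h([x,y])+\theta(x,y).
\]
The right side is
\[
[x+h(x),\,y+h(y)]_{(\rho',\theta')}=[x,y]+\theta'(x,y)+\rho'(x)h(y)+\rho'(y)h(x)+[h(x),h(y)]_V .
\]
The $J$-components agree trivially. Equating the $V$-components gives
\[
\theta'(x,y)=\theta(x,y)+h([x,y])-\rho'(x)h(y)-\rho'(y)h(x)-[h(x),h(y)]_V .
\]
Now substitute \eqref{eq:equivalent-cocycle1} twice: $\rho'(x)h(y)=\rho(x)h(y)-[h(x),h(y)]_V$ and $\rho'(y)h(x)=\rho(y)h(x)-[h(y),h(x)]_V$. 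Since $[\cdot,\cdot]_V$ is symmetric, this adds $+2[h(x),h(y)]_V$. Combined with the existing $-[h(x),h(y)]_V$, the net bracket term is $+[h(x),h(y)]_V$, which gives \eqref{eq:equivalent-cocycle2}.

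Two further checks are not needed for the formulas but make the argument complete. The pair $(u,v)$ in $V$ gives the identity $[u,v]_V=[u,v]_V$, so it imposes no condition. Bijectivity of $\Phi_h$ is automatic, with inverse $\Phi_{-h}$.

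The only step that needs care is the sign and symmetry bookkeeping for the $[h(x),h(y)]_V$ terms in the $\theta'$ formula. Everything else is a direct comparison of components, so I expect no real obstacle.
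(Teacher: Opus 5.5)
Your proof is correct and follows the same route as the paper. You apply the homomorphism condition for $\Phi_h$ to pairs $(x,v)$ and $(x,y)$, compare $V$-components, and then substitute \eqref{eq:equivalent-cocycle1} using the symmetry of $[\cdot,\cdot]_V$ to get the net $+[h(x),h(y)]_V$ term; your sign bookkeeping checks out and is more explicit than the paper's sketch.
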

	\begin{proof}
		Applying the homomorphism condition $\Phi_h([x,v]_{\rho,\theta}) = [\Phi_h(x), \Phi_h(v)]_{\rho',\theta'}$ 
		for $x \in J$ and $v \in V$ yields \eqref{eq:equivalent-cocycle1}. Similarly, applying 
		$\Phi_h([x,y]_{(\rho,\theta)}) = [\Phi_h(x), \Phi_h(y)]_{(\rho',\theta')}$ for $x,y \in J$, and substituting \eqref{eq:equivalent-cocycle1} together with the symmetry of $[\cdot,\cdot]_V$, gives \eqref{eq:equivalent-cocycle2}.
	\end{proof}
	\begin{remark}
		Unlike the abelian case, the quadratic term $[h(x),h(y)]_V$ 
		in~\eqref{eq:equivalent-cocycle2} prevents $H^2_{\mathrm{na}}(J,V)$ 
		from carrying a natural group structure: it is only a pointed set, 
		with base point the class of $(0,0)$.
	\end{remark}
	\begin{remark}\label{rem:abelian-reduction}
		In the abelian case $[\cdot,\cdot]_V = 0$, 
		condition~\eqref{eq:equivalent-cocycle1} reduces to 
		$\rho' = \rho$, and 
		condition~\eqref{eq:equivalent-cocycle2} reduces to
		\[
		\theta'(x,y) = \theta(x,y) + d^1(h)(x,y),
		\]
		which is the classical coboundary relation.
	\end{remark}
	%%%%%%%%%%%%%%%%%%%%%%%%%%%%%%%%%%%%%
	%%%%%%%%%%%%%%%%%%%%%%%%
	%%%%%%%%%%%%%%%%%%%%%%%%%%%%%%%%%%%%%%%%%%%%%%%%%%%%%%%%%%%%%%
	\section{Extensions of Hom-Jacobi-Jordan 
		algebras}\label{sec3}
%%%%%%%%%%%%%%%%%%%%%%%%%%%%%%%%%%%%%%%%%
%%%%%%%%%%%%%%%%%%%%%%%%%%%%%%%%%
%%%%%%%%%%%%%%%%%%%%%%%%%%%%	
In this section, we study extensions of Hom-Jacobi-Jordan algebras 
and relate them to the cohomology theory of Section~\ref{sec2}, 
culminating in Theorem~\ref{thm:ext-cohomology}: a natural bijection 
between $H^2_{\mathrm{na}}(J,V)$ and the set $\mathrm{Ext}(J,V)$ of 
equivalence classes of split extensions of $J$ by $V$.		
	\begin{defn}\label{def:extension}
		Let $(J, [\cdot,\cdot], \alpha)$, 
		$(V, [\cdot,\cdot]_V, \beta)$, and 
		$(M, [\cdot,\cdot]_M, \alpha_M)$ be 
		Hom-Jacobi-Jordan algebras, and let
		\[
		i \colon V \to M, \qquad \pi \colon M \to J
		\]
		be morphisms of Hom-Jacobi-Jordan algebras.
		The sequence
		\[
		0
		\longrightarrow
		(V, [\cdot,\cdot]_V, \beta)
		\stackrel{i}{\longrightarrow}
		(M, [\cdot,\cdot]_M, \alpha_M)
		\stackrel{\pi}{\longrightarrow}
		(J, [\cdot,\cdot], \alpha)
		\longrightarrow
		0
		\]
		is called a \emph{short exact sequence} if 
		$i$ is injective, $\pi$ is surjective, and
		$\operatorname{Im}(i) = \ker(\pi)$.
		The structure maps are required to satisfy
		\begin{equation}\label{extension9}
			\alpha_M \circ i = i \circ \beta,
			\qquad
			\alpha \circ \pi = \pi \circ \alpha_M.
		\end{equation}
		In this case, $(M, [\cdot,\cdot]_M, i,\pi)$ 
		is called an \emph{extension} of 
		$(J, [\cdot,\cdot], \alpha)$ by 
		$(V, [\cdot,\cdot]_V, \beta)$.
		
		The extension is called \emph{split} if there exists a linear map 
		$s \colon J \to M$, called a \emph{Hom-section}, satisfying
		\[
		\pi \circ s = \operatorname{id}_J \qquad \text{and} \qquad \alpha_M \circ s = s \circ \alpha.
		\]
	\end{defn}

	%%%%%%%%%%%%%%%%%%%%%%%%%%%%%%%%%%%%%%%%%%%%
	\begin{remark}\label{rem:split-invariant}
		The existence of a Hom-section $s \colon J \to M$ is equivalent to the existence of an 
		$\alpha_M$-invariant subspace $X \subseteq M$ such that $M = i(V) \oplus X$. 
		Indeed, if $s$ is a Hom-section, then $X = s(J)$ is clearly $\alpha_M$-invariant and 
		$M = i(V) \oplus s(J)$. Conversely, if such an $X$ exists, the restriction 
		$\pi|_X \colon X \to J$ is a linear isomorphism. Defining $s := (\pi|_X)^{-1}$, 
		the morphism property of $\pi$ together with the invariance $\alpha_M(X) \subseteq X$ 
		implies $\alpha_M \circ s = s \circ \alpha$, so $s$ is a Hom-section.
	\end{remark}
	%%%%%%%%%%%%%%%%%%%%%%%%%%
	\begin{proposition}\label{prop:split-extension}
		Let $(J, [\cdot,\cdot], \alpha)$ and 
		$(V, [\cdot,\cdot]_V, \beta)$ be Hom-Jacobi-Jordan 
		algebras. The sequence
		\[
		E \colon 0 
		\longrightarrow 
		(V, [\cdot,\cdot]_V, \beta)
		\stackrel{i_0}{\longrightarrow}
		(J \oplus V, [\cdot,\cdot]_{J\oplus V}, \alpha+\beta)
		\stackrel{\pi_0}{\longrightarrow}
		(J, [\cdot,\cdot], \alpha)
		\longrightarrow 0,
		\]
		where $i_0$ and $\pi_0$ are the natural inclusion 
		and projection maps, defines a split extension of 
		$J$ by $V$ if and only if
		\[
		[x+u, y+v]_{J\oplus V}
		= [x,y] + \theta(x,y) 
		+ \rho(x)v + \rho(y)u + [u,v]_V,
		\]
		for all $x, y \in J$ and $u, v \in V$,
		where $(\rho, \theta)$ is a $2$-cocycle of $J$ 
		with values in $V$.
		
	\end{proposition}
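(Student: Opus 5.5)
The plan is to reduce everything to Theorem~\ref{thm:structureMAY24}, after first checking that the natural maps $i_0,\pi_0$ and the canonical section behave as required.

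\emph{Forward direction.} Suppose $E$ is a split extension of $J$ by $V$, so $(J\oplus V,[\cdot,\cdot]_{J\oplus V},\alpha+\beta)$ is a Hom-Jacobi-Jordan algebra and $i_0,\pi_0$ are morphisms.

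First, since $i_0$ is a morphism, $[u,v]_{J\oplus V}=[u,v]_V$ for $u,v\in V$. Since $\pi_0$ is a morphism, the $J$-component of $[x,y]_{J\oplus V}$ is $[x,y]$.

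Next, $\operatorname{Im} i_0=\ker\pi_0$ gives $\pi_0([x+u,v]_{J\oplus V})=[x,0]=0$. Hence $[J\oplus V,V]_{J\oplus V}\subseteq V$, i.e.\ $V$ is an ideal. Also $V$ and $J$ are $(\alpha+\beta)$-invariant.

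We are therefore exactly in the setting of the Notation on page~\pageref{not:JV-decomposition} with $M=J\oplus V$. Define $\rho(x)v:=[x,v]_{J\oplus V}$ and let $\theta(x,y)$ be the $V$-component of $[x,y]_{J\oplus V}$. Bilinearity and symmetry give the decomposition~\eqref{eq:bracket-decomposition}, which is exactly the displayed formula.

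Finally, Theorem~\ref{thm:structureMAY24} applied to $M$ shows that $(\rho,\theta)$ is a $2$-cocycle. In particular $\theta\in C^2_{\alpha,\beta}(J,V)$, coming from the $V$-component of multiplicativity.

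\emph{Converse direction.} Given a $2$-cocycle $(\rho,\theta)$, define the bracket by the formula. By the converse part of Theorem~\ref{thm:structureMAY24}, using that $J$ and $V$ are Hom-Jacobi-Jordan algebras, $J\oplus V$ with $\alpha+\beta$ is a Hom-Jacobi-Jordan algebra. It then remains to check the extension data:
\begin{itemize}
\item $i_0$ is a morphism, because $[u,v]_{(\rho,\theta)}=[u,v]_V$ and $(\alpha+\beta)i_0=i_0\beta$.
\item $\pi_0$ is a morphism, because the $J$-component of the bracket is $[x,y]$ and $\alpha\pi_0=\pi_0(\alpha+\beta)$.
\item Exactness is clear.
\item The inclusion $s(x)=x$ satisfies $\pi_0 s=\mathrm{id}_J$ and $(\alpha+\beta)s=s\alpha$, so $s$ is a Hom-section and the extension is split.
\end{itemize}

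\emph{Expected obstacle.} The only delicate point is in the forward direction: showing $[J\oplus V,V]\subseteq V$ and identifying the $J$-part of $[x,y]_{J\oplus V}$ with $[x,y]$ purely from the morphism and exactness properties of $\pi_0$. Once that is done, Theorem~\ref{thm:structureMAY24} applies verbatim.
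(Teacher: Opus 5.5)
Your proposal is correct and follows the paper's proof. The converse is exactly the paper's argument: Theorem~\ref{thm:structureMAY24} plus the canonical section $s(x)=x$, which you verify directly instead of through Remark~\ref{rem:split-invariant}. Your forward direction supplies the details the paper calls straightforward: you use that $i_0,\pi_0$ are morphisms and $\ker\pi_0=V$ to place $J\oplus V$ in the setting of the Notation, then invoke Theorem~\ref{thm:structureMAY24}.
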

	
	\begin{proof}
	The direct implication is straightforward. Conversely, 
	assume the bracket has the stated form for a $2$-cocycle 
	$(\rho,\theta)$ of $J$ with values in $V$; by 
	Theorem~\ref{thm:structureMAY24}, $(J \oplus V, 
	[\cdot,\cdot]_{J\oplus V}, \alpha+\beta)$ is then a 
	Hom-Jacobi-Jordan algebra. It is straightforward to check 
	that $i_0$ and $\pi_0$ satisfy the compatibility 
	conditions~\eqref{extension9}, and that
	\[
	J \oplus V = i_0(V) \oplus s(J), \qquad s(x)=(x,0),
	\]
	with $s(J)$ an $(\alpha+\beta)$-invariant complement to 
	$i_0(V)$; by Remark~\ref{rem:split-invariant}, $s$ is 
	accordingly a Hom-section of $\pi_0$. This shows that $E$ 
	is a split extension of $J$ by $V$, completing the proof.
\end{proof}

	\noindent We refer to $E$ as the \emph{standard split extension} of $J$ by $V$ associated with the $2$-cocycle $(\rho,\theta)$, 
	and we denote the corresponding bracket on $J \oplus V$ by $[\cdot,\cdot]_{(\rho,\theta)}$.
	%%%%%%%%%%%%%%%%%%%%
	
	We now introduce the notion of equivalence between 
	two extensions of $J$ by $V$.
	
	\begin{defn}\label{def:equivalent-extensions}
		Two extensions
		\[
		0 \longrightarrow 
		(V, [\cdot,\cdot]_V, \beta)
		\stackrel{i_1}{\longrightarrow}
		(M_1, [\cdot,\cdot]_{M_1}, \alpha_{M_1})
		\stackrel{\pi_1}{\longrightarrow}
		(J, [\cdot,\cdot], \alpha)
		\longrightarrow 0
		\]
		and
		\[
		0 \longrightarrow 
		(V, [\cdot,\cdot]_V, \beta)
		\stackrel{i_2}{\longrightarrow}
		(M_2, [\cdot,\cdot]_{M_2}, \alpha_{M_2})
		\stackrel{\pi_2}{\longrightarrow}
		(J, [\cdot,\cdot], \alpha)
		\longrightarrow 0
		\]
		are said to be \emph{equivalent} if there exists a 
		Hom-Jacobi-Jordan algebra isomorphism
		\[
		\Phi \colon 
		(M_1, [\cdot,\cdot]_{M_1}, \alpha_{M_1})
		\longrightarrow
		(M_2, [\cdot,\cdot]_{M_2}, \alpha_{M_2})
		\]
		such that the following diagram commutes:
		\[
		\begin{tikzcd}
			0 \arrow[r]
			& (V,[\cdot,\cdot]_V,\beta)
			\arrow[d,"\mathrm{id}_V"']
			\arrow[r,"i_1"]
			& (M_1,[\cdot,\cdot]_{M_1},\alpha_{M_1})
			\arrow[d,"\Phi"]
			\arrow[r,"\pi_1"]
			& (J,[\cdot,\cdot],\alpha)
			\arrow[d,"\mathrm{id}_J"]
			\arrow[r]
			& 0
			\\
			0 \arrow[r]
			& (V,[\cdot,\cdot]_V,\beta)
			\arrow[r,"i_2"]
			& (M_2,[\cdot,\cdot]_{M_2},\alpha_{M_2})
			\arrow[r,"\pi_2"]
			& (J,[\cdot,\cdot],\alpha)
			\arrow[r]
			& 0
		\end{tikzcd}
		\]
		that is,
		\[
		\Phi \circ i_1 = i_2
		\qquad \text{and} \qquad
		\pi_2 \circ \Phi = \pi_1.
		\]
		We denote by $\mathrm{Ext}(J,V)$ the set of 
		equivalence classes of split extensions of 
		$(J,[\cdot,\cdot],\alpha)$ by 
		$(V,[\cdot,\cdot]_V,\beta)$.
	\end{defn}
	%%%%%%%%%%%%%%%%%%%%%%%%%%%%%%%%%%%%%%%%%%%%%%%
	\begin{theorem}\label{thm:every-extension-standard}
		Let
		\[
		E \colon 0 
		\longrightarrow 
		(V, [\cdot,\cdot]_V, \beta)
		\stackrel{i}{\longrightarrow}
		(M, [\cdot,\cdot]_M, \alpha_M)
		\stackrel{\pi}{\longrightarrow}
		(J, [\cdot,\cdot], \alpha)
		\longrightarrow 0
		\]
		be a split extension of $J$ by $V$. Then there 
		exists a $2$-cocycle $(\rho, \theta) \in 
		Z^2_{\mathrm{na}}(J,V)$ such that the bracket
		\[
		[x+u, y+v]_{(\rho,\theta)}
		= [x,y] + \theta(x,y) 
		+ \rho(x)v + \rho(y)u + [u,v]_V
		\]
		defines a Hom-Jacobi-Jordan algebra structure 
		on $J \oplus V$, and the standard split extension
		\[
		E_0 \colon 0 
		\longrightarrow 
		(V, [\cdot,\cdot]_V, \beta)
		\stackrel{i_0}{\longrightarrow}
		(J \oplus V, [\cdot,\cdot]_{(\rho,\theta)}, \alpha+\beta)
		\stackrel{\pi_0}{\longrightarrow}
		(J, [\cdot,\cdot], \alpha)
		\longrightarrow 0
		\]
		is equivalent to $E$.
	\end{theorem}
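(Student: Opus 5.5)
We use the Hom-section to transport the structure of $M$ onto $J\oplus V$ and then read off $(\rho,\theta)$ from the transported bracket. Fix a Hom-section $s\colon J\to M$, so that $\pi\circ s=\mathrm{id}_J$ and $\alpha_M\circ s=s\circ\alpha$. Define
\[
\Psi\colon J\oplus V\longrightarrow M,\qquad \Psi(x+v)=s(x)+i(v).
\]
Since $M=i(V)\oplus s(J)$ (Remark~\ref{rem:split-invariant}) and $i$ is injective, $\Psi$ is a linear isomorphism. We also have $\alpha_M\circ\Psi=\Psi\circ(\alpha+\beta)$, by $\alpha_M\circ s=s\circ\alpha$ and \eqref{extension9}. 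Transport the bracket by $[a,b]' := \Psi^{-1}[\Psi(a),\Psi(b)]_M$. Then $(J\oplus V,[\cdot,\cdot]',\alpha+\beta)$ is a Hom-Jacobi-Jordan algebra isomorphic to $M$ via $\Psi$.

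Next we identify the components of $[\cdot,\cdot]'$ in order to extract $\rho$ and $\theta$.
\begin{itemize}
\item For $x,y\in J$, we have $\pi([s(x),s(y)]_M-s([x,y]))=0$, so $[s(x),s(y)]_M=s([x,y])+i(\theta(x,y))$ for a unique $\theta(x,y)\in V$. Symmetry and bilinearity of $\theta$ are inherited from $[\cdot,\cdot]_M$.
\item For $x\in J$ and $v\in V$, the element $[s(x),i(v)]_M$ lies in $\ker\pi=\operatorname{Im} i$ because $\pi(i(v))=0$. So $\rho(x)v:=i^{-1}[s(x),i(v)]_M$ is well defined and linear in both arguments.
\item For $u,v\in V$, the element $[i(u),i(v)]_M$ equals $i([u,v]_V)$ because $i$ is a morphism.
\end{itemize}
Expanding bilinearly then shows that $[x+u,y+v]'=[x+u,y+v]_{(\rho,\theta)}$. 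This also means $[J\oplus V,V]'\subseteq V$, and $J$ and $V$ are $(\alpha+\beta)$-invariant. We are therefore exactly in the setting of the Notation preceding Section~\ref{sec2}, and the $J$-component of the bracket on $J\times J$ is the given $[\cdot,\cdot]$ of $J$.

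Now apply Theorem~\ref{thm:structureMAY24}, direct implication. Since $(J\oplus V,[\cdot,\cdot]_{(\rho,\theta)},\alpha+\beta)$ is a Hom-Jacobi-Jordan algebra, $(\rho,\theta)$ is a $2$-cocycle, i.e.\ $(\rho,\theta)\in Z^2_{\mathrm{na}}(J,V)$. The cochain condition $\theta\in C^2_{\alpha,\beta}(J,V)$ can also be checked directly: apply $\alpha_M$ to the defining relation of $\theta$ and use $\alpha_M s=s\alpha$, $\alpha_M i=i\beta$, and injectivity of $i$.

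Finally, $\Psi$ gives the equivalence $E_0\simeq E$. We have $\Psi\circ i_0=i$ by construction, and $\pi\circ\Psi(x+v)=\pi s(x)+\pi i(v)=x=\pi_0(x+v)$. By construction $\Psi$ is a bracket-preserving bijection commuting with the twisting maps, so it is an isomorphism of Hom-Jacobi-Jordan algebras making the diagram of Definition~\ref{def:equivalent-extensions} commute.

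The only delicate point is the component identification. One must check that the $J$-component of $[s(x),s(y)]_M$ really is $s([x,y])$; this uses that $\pi$ is a morphism. One must also check that the twisting map transported to $J\oplus V$ is exactly $\alpha+\beta$ rather than something with off-diagonal terms. That is precisely where the Hom-section condition $\alpha_M\circ s=s\circ\alpha$, rather than a mere linear section, is needed.
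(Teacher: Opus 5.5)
Your proposal is correct and follows the same route as the paper. In both, the bracket is transported along $\Psi(x+v)=s(x)+i(v)$, $(\rho,\theta)$ is read off from the transported bracket, Theorem~\ref{thm:structureMAY24} supplies the cocycle conditions, and $\Psi$ itself serves as the equivalence $E_0\simeq E$. You also fill in the component identification that the paper leaves implicit: it follows from $\pi$ and $i$ being morphisms and $\ker\pi=\operatorname{Im} i$, whereas the paper simply attributes the form of the transported bracket to Theorem~\ref{thm:structureMAY24}.
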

\begin{proof}
	By Remark~\ref{rem:split-invariant}, there exists a Hom-section $s\colon J\to M$ with $M=i(V)\oplus s(J)$ and $\alpha_M\circ s=s\circ\alpha$. Define $\Phi\colon J\oplus V\to M$ by $\Phi(x,v)=s(x)+i(v)$. Then $\Phi$ is an isomorphism satisfying $\Phi\circ i_0=i$, $\pi\circ\Phi=\pi_0$, and $\Phi\circ(\alpha+\beta)=\alpha_M\circ\Phi$. By Theorem~\ref{thm:structureMAY24}, the bracket on $J\oplus V$ transported via $\Phi$ has the form $[(x,u),(y,v)]=[x,y]+\theta(x,y)+\rho(x)v+\rho(y)u+[u,v]_V$ for some $2$-cocycle $(\rho,\theta)$, so $E_0$ is equivalent to $E$.
\end{proof}	
	%%%%%%%%%%%%%%%%%%%%%%%%%%%%%%%%
	\begin{proposition}\label{prop:equivalent-extensions}
		Let
		\[
		E_0 \colon 0 
		\longrightarrow 
		(V, [\cdot,\cdot]_V, \beta)
		\stackrel{i_0}{\longrightarrow}
		(J \oplus V, [\cdot,\cdot]_{(\rho,\theta)}, \alpha+\beta)
		\stackrel{\pi_0}{\longrightarrow}
		(J, [\cdot,\cdot], \alpha)
		\longrightarrow 0
		\]
		and
		\[
		E'_0 \colon 0 
		\longrightarrow 
		(V, [\cdot,\cdot]_V, \beta)
		\stackrel{i_0}{\longrightarrow}
		(J \oplus V, [\cdot,\cdot]_{(\rho',\theta')}, \alpha+\beta)
		\stackrel{\pi_0}{\longrightarrow}
		(J, [\cdot,\cdot], \alpha)
		\longrightarrow 0
		\]
		be two standard split extensions associated with 
		the $2$-cocycles $(\rho,\theta)$ and $(\rho',\theta')$, 
		respectively.
		Then $E_0$ and $E'_0$ are equivalent if and only if 
		there exists a $1$-cochain 
		$h \in C^1_{\alpha,\beta}(J,V)$ such that
		\begin{align}
			\rho'(x)v
			&= \rho(x)v -[h(x),v]_V,
			\label{eq:ext-equiv1}\\
			\theta'(x,y)
			&= \theta(x,y) + h([x,y])
			- \rho(x)h(y) - \rho(y)h(x)
			+ [h(x),h(y)]_V.
			\label{eq:ext-equiv2}
		\end{align}
		That is, $(\rho,\theta) \sim (\rho',\theta')$ in the sense of Definition~\ref{def:equivalent-cocycles}.
		
	\end{proposition}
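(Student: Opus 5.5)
The plan is to reduce both directions to the shape of an arbitrary isomorphism $\Phi\colon J\oplus V\to J\oplus V$ that makes the extension diagram commute. Once we show that such a $\Phi$ is necessarily of the form $\Phi_h(x+v)=x+h(x)+v$ for some $h\in C^1_{\alpha,\beta}(J,V)$, the statement becomes Definition~\ref{def:equivalent-cocycles} combined with Proposition~\ref{prop:cocycle-equivalence-formulas}.

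\emph{Step 1 (form of $\Phi$).} Suppose $E_0$ and $E'_0$ are equivalent via $\Phi$. The condition $\Phi\circ i_0=i_0$ gives $\Phi(v)=v$ for all $v\in V$. The condition $\pi_0\circ\Phi=\pi_0$ gives $\Phi(x)-x\in\ker\pi_0=V$ for $x\in J$, so we may write $\Phi(x)=x+h(x)$ with $h\colon J\to V$ linear. Since $\Phi$ is a morphism of Hom-Jacobi-Jordan algebras, we have $\Phi\circ(\alpha+\beta)=(\alpha+\beta)\circ\Phi$. Evaluating this on $x\in J$ gives $\alpha(x)+h(\alpha(x))=\alpha(x)+\beta(h(x))$, so $h\circ\alpha=\beta\circ h$ and hence $h\in C^1_{\alpha,\beta}(J,V)$. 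Therefore $\Phi=\Phi_h$ is an isomorphism from $[\cdot,\cdot]_{(\rho,\theta)}$ to $[\cdot,\cdot]_{(\rho',\theta')}$, which means $(\rho,\theta)\sim(\rho',\theta')$. Proposition~\ref{prop:cocycle-equivalence-formulas} then yields \eqref{eq:ext-equiv1} and \eqref{eq:ext-equiv2}. To be safe, I would re-derive these two formulas directly. First compare $\Phi_h([x,v]_{(\rho,\theta)})=\rho(x)v$ with $[x+h(x),v]_{(\rho',\theta')}=\rho'(x)v+[h(x),v]_V$. Then compare the $V$-components of $\Phi_h([x,y]_{(\rho,\theta)})=[x,y]+h([x,y])+\theta(x,y)$ and of $[x+h(x),y+h(y)]_{(\rho',\theta')}=[x,y]+\theta'(x,y)+\rho'(x)h(y)+\rho'(y)h(x)+[h(x),h(y)]_V$. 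Substituting \eqref{eq:ext-equiv1} and using the symmetry of $[\cdot,\cdot]_V$ gives the stated formula.

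\emph{Step 2 (converse).} Given $h\in C^1_{\alpha,\beta}(J,V)$ satisfying \eqref{eq:ext-equiv1}--\eqref{eq:ext-equiv2}, define $\Phi_h$ as above. It is bijective, with inverse $x+v\mapsto x-h(x)+v$. It commutes with $\alpha+\beta$ because $h\circ\alpha=\beta\circ h$. It satisfies $\Phi_h\circ i_0=i_0$ and $\pi_0\circ\Phi_h=\pi_0$ by construction. It remains to check that $\Phi_h$ preserves brackets. By bilinearity and symmetry it suffices to check the three cases $(x,y)$, $(x,v)$ and $(u,v)$. The case $(u,v)$ is immediate, since both brackets restrict to $[\cdot,\cdot]_V$ on $V$. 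The other two cases are exactly the computations of Step 1 read backwards.

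The only point requiring care is the mixed term in the $(x,y)$ case. Expanding $[x+h(x),y+h(y)]_{(\rho',\theta')}$ produces $\rho'(x)h(y)+\rho'(y)h(x)+[h(x),h(y)]_V$. After substituting \eqref{eq:ext-equiv1}, this becomes $\rho(x)h(y)+\rho(y)h(x)-[h(x),h(y)]_V$, where the two cross terms combine by symmetry. Adding $\theta'(x,y)$ from \eqref{eq:ext-equiv2} then recovers $\theta(x,y)+h([x,y])$, as required. I expect this sign bookkeeping to be the main (mild) obstacle; every other step is formal.
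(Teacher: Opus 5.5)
Your proposal is correct and follows the same route as the paper. You show that any equivalence $\Phi$ must be $\Phi_h$ with $h\circ\alpha=\beta\circ h$, using $\Phi\circ i_0=i_0$, $\pi_0\circ\Phi=\pi_0$ and compatibility with the twist, then read off \eqref{eq:ext-equiv1}--\eqref{eq:ext-equiv2} from the $(x,v)$ and $(x,y)$ bracket conditions and reverse the computation for the converse; your explicit sign check in the $(x,y)$ case, which the paper leaves as a ``direct computation,'' is accurate.
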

	
	\begin{proof}
		Suppose first that $E_0$ and $E'_0$ are equivalent.
		Then there exists a Hom-Jacobi-Jordan algebra 
		isomorphism
		\[
		\Phi \colon 
		(J \oplus V, [\cdot,\cdot]_{(\rho,\theta)}, \alpha+\beta)
		\longrightarrow
		(J \oplus V, [\cdot,\cdot]_{(\rho',\theta')}, \alpha+\beta)
		\]
		satisfying $\Phi \circ i_0 = i_0$ and 
		$\pi_0 \circ \Phi = \pi_0$.
		From these two conditions, one deduces that $\Phi$ 
		has the form
		\[
		\Phi(x+v) = x + h(x) + v,
		\]
		for some linear map $h \colon J \to V$.
		The compatibility condition
		$\Phi \circ (\alpha+\beta) = (\alpha+\beta) \circ \Phi$
		gives
		\[
		h \circ \alpha = \beta \circ h,
		\]
		hence $h \in C^1_{\alpha,\beta}(J,V)$.

	The derivation of \eqref{eq:ext-equiv1} and \eqref{eq:ext-equiv2} 
	from the isomorphism condition on $\Phi$ is then identical to 
	the proof of Proposition~\ref{prop:cocycle-equivalence-formulas}, 
	applied respectively to $x \in J,\ v \in V$ and to $x, y \in J$.	
	
		Conversely, let $h \in C^1_{\alpha,\beta}(J,V)$ 
		satisfy~\eqref{eq:ext-equiv1} 
		and~\eqref{eq:ext-equiv2}. Define
		\[
		\Phi \colon J \oplus V \to J \oplus V,
		\qquad \Phi(x+v) = x + h(x) + v.
		\]
		A direct computation shows that $\Phi$ is an 
		isomorphism of Hom-Jacobi-Jordan algebras 
		satisfying $\Phi \circ i_0 = i_0$ and 
		$\pi_0 \circ \Phi = \pi_0$.
		Therefore, $E_0$ and $E'_0$ are equivalent.
	\end{proof}
	%%%%%%%%%%%%%%%%%%%%%%%%%%%%
	Before stating the main classification result, we summarize the three-step correspondence underlying it.
	\begin{enumerate}
		\item \textbf{Cocycle $\to$ extension:} For $(\rho,\theta)\in Z^2_{\mathrm{na}}(J,V)$, Theorem~\ref{thm:structureMAY24} and Proposition~\ref{prop:split-extension} show that
		\[
		[x+u,y+v]_{(\rho,\theta)} = [x,y] + \theta(x,y) + \rho(x)v + \rho(y)u + [u,v]_V
		\]
		defines a standard split extension $\mathcal{E}_{(\rho,\theta)}$ of $J$ by $V$.
		\item \textbf{Equivalence:} By Proposition~\ref{prop:equivalent-extensions}, $\mathcal{E}_{(\rho,\theta)} \sim \mathcal{E}_{(\rho',\theta')}$ iff the cocycles are equivalent (Definition~\ref{def:equivalent-cocycles}) via some $h \in C^1_{\alpha,\beta}(J,V)$.
		\item \textbf{Every extension is standard:} By Theorem~\ref{thm:every-extension-standard}, every split extension of $J$ by $V$ is equivalent to some $\mathcal{E}_{(\rho,\theta)}$.
	\end{enumerate}
	Together, these steps yield the natural bijection $H^2_{\mathrm{na}}(J,V) \leftrightarrow \mathrm{Ext}(J,V)$.
	Recall that $\mathrm{Ext}(J,V)$ denotes the set 
	of equivalence classes of split extensions of 
	$(J, [\cdot,\cdot], \alpha)$ by 
	$(V, [\cdot,\cdot]_V, \beta)$, as introduced in 
	Definition~\ref{def:equivalent-extensions}.
	For an extension $\mathcal{E}$, we denote its 
	equivalence class by $[\mathcal{E}]$, and for a 
	$2$-cocycle $c = (\rho,\theta)$, we denote its 
	equivalence class in $H^2_{\mathrm{na}}(J,V)$ 
	by $[c]$.
	
	\begin{theorem}\label{thm:ext-cohomology}
		There exists a natural bijection
		\[
		\Psi \colon 
		H^2_{\mathrm{na}}(J,V) 
		\longrightarrow 
		\mathrm{Ext}(J,V),
		\qquad
		[c] \longmapsto [\mathcal{E}_c],
		\]
		where $\mathcal{E}_c$ denotes the standard split 
		extension associated with the $2$-cocycle 
		$c = (\rho,\theta)$.
	\end{theorem}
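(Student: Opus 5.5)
The plan is to check in turn that $\Psi$ is well defined, injective and surjective, using the three results summarized just before the statement.

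\emph{Well-definedness.} For a cocycle $c=(\rho,\theta)\in Z^2_{\mathrm{na}}(J,V)$, Theorem~\ref{thm:structureMAY24} together with Proposition~\ref{prop:split-extension} shows that $\mathcal{E}_c$ is a split extension. Its class $[\mathcal{E}_c]\in\mathrm{Ext}(J,V)$ is therefore meaningful. If $c\sim c'$, Definition~\ref{def:equivalent-cocycles} gives some $h\in C^1_{\alpha,\beta}(J,V)$ for which $\Phi_h(x+v)=x+h(x)+v$ is an isomorphism
\[
(J\oplus V,[\cdot,\cdot]_{c},\alpha+\beta)\to(J\oplus V,[\cdot,\cdot]_{c'},\alpha+\beta).
\]
I will check directly that $\Phi_h\circ i_0=i_0$ (since $\Phi_h(v)=v$) and that $\pi_0\circ\Phi_h=\pi_0$ (since $h(x)\in V$). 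Hence $\Phi_h$ is an equivalence of extensions, $[\mathcal{E}_c]=[\mathcal{E}_{c'}]$, and $\Psi$ does not depend on the representative.

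\emph{Injectivity.} Suppose $[\mathcal{E}_c]=[\mathcal{E}_{c'}]$. Proposition~\ref{prop:equivalent-extensions} then yields $h\in C^1_{\alpha,\beta}(J,V)$ satisfying \eqref{eq:ext-equiv1}--\eqref{eq:ext-equiv2}. I must conclude $c\sim c'$ in the sense of Definition~\ref{def:equivalent-cocycles}, which asks that $\Phi_h$ itself be an isomorphism. The cleanest route is to reuse the first half of the proof of Proposition~\ref{prop:equivalent-extensions}. Any equivalence $\Phi$ between $\mathcal{E}_c$ and $\mathcal{E}_{c'}$ fixes $V$ pointwise and induces the identity on $J$, so it must have the form $\Phi=\Phi_h$ with $h\in C^1_{\alpha,\beta}(J,V)$. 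That same $\Phi_h$ is an isomorphism of Hom-Jacobi-Jordan algebras, and this is exactly the requirement in Definition~\ref{def:equivalent-cocycles}. So $[c]=[c']$.

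\emph{Surjectivity.} This is immediate from Theorem~\ref{thm:every-extension-standard}: every split extension $E$ is equivalent to a standard one $\mathcal{E}_c$, so $[E]=\Psi([c])$.

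\emph{Main obstacle.} Logically this is bookkeeping. The one point needing care is matching the two notions of equivalence. Definition~\ref{def:equivalent-cocycles} requires $\Phi_h$ to be an isomorphism, whereas Proposition~\ref{prop:cocycle-equivalence-formulas} only extracts necessary formulas from it. I therefore argue directly with $\Phi_h$, as above, rather than through formulas \eqref{eq:ext-equiv1}--\eqref{eq:ext-equiv2}; this avoids needing their converse. In both directions I will also explicitly check that $\Phi_h$ is bijective, with inverse $\Phi_{-h}$, and that it commutes with $\alpha+\beta$ because $h\circ\alpha=\beta\circ h$.

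\emph{Naturality.} Finally, I will note that $\Psi$ sends the base point $[(0,0)]$ to the class of the trivial split extension $J\oplus V$ with componentwise bracket.
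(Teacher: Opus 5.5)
Your proposal is correct and follows essentially the same route as the paper: well-definedness and injectivity rest on the fact that an equivalence of standard split extensions is exactly a map $\Phi_h$, $h\in C^1_{\alpha,\beta}(J,V)$, that is an isomorphism (the content of Proposition~\ref{prop:equivalent-extensions}), and surjectivity is Theorem~\ref{thm:every-extension-standard}. Arguing directly with $\Phi_h$ instead of through the formulas \eqref{eq:ext-equiv1}--\eqref{eq:ext-equiv2} is a harmless and slightly cleaner way to match the two notions of equivalence.
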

	
	\begin{proof}
		We show that $\Psi$ is well defined, injective, 
		and surjective.
		
		\medskip
		\noindent\textit{Well-definedness and injectivity.}
		By Proposition~\ref{prop:equivalent-extensions}, 
		two $2$-cocycles are equivalent if and only if 
		their associated standard split extensions are 
		equivalent. Hence $\Psi$ is well defined, and 
		distinct cohomology classes give rise to 
		non-equivalent extensions, so $\Psi$ is injective.
		
		\medskip
		\noindent\textit{Surjectivity.}
		By Theorem~\ref{thm:every-extension-standard}, 
		every split extension of $J$ by $V$ is equivalent 
		to a standard split extension $\mathcal{E}_c$ 
		for some $2$-cocycle $c \in Z^2_{\mathrm{na}}(J,V)$.
		Hence $\Psi$ is surjective.
		
		\medskip
		Therefore, $\Psi$ is a bijection.
	\end{proof}
	%%%%%%%%%%%%%%%%%%%%%%%%%%%%%%%%%%%%%%%
%%%%%%%%%%%%%%%%%%%%%%%%%%%%%%%%%%%%%%%%%%%%%%%%%%%%%%%	
%%%%%%\section{Classification of Low-Dimensional Hom-Jacobi-Jordan Algebras}
	%%%%%%%%%%%%%%%%%%%%%%%%%%%%%%%%%%%%%%%%%%%%%%%%%%%%%%%%%%%%%%%%%%%%%%%%%%%%%%%%%%%%%
%%%%%%%%%%%%%%%%%%%%%%%%%%%
\section{Application: low-dimensional cohomology and split extensions}\label{sec:application}
%%%%%%%%%%%%%%%%%%%%%%%%%%%%%%%%%%%%%%%%%%%
In this section, we apply the cohomological framework of 
Sections~\ref{sec2} and~\ref{sec3} to compute explicit $2$-cocycles 
of low-dimensional Hom-Jacobi-Jordan algebras, yielding via 
Theorem~\ref{thm:ext-cohomology} all decomposable split extensions, 
and via Proposition~\ref{prop:Jordan} all indecomposable regular 
algebras by twisting classical Jacobi--Jordan algebras. The only 
excluded case is $\alpha_M^m=0$.
%%%%%%%%%%%%%%%%%%%%%%%%%%%%%%%%%%%%%%%
%%%%%%%%%%%%%%%%%%%%
	\subsection{Classification and Cohomology of $1$-dimensional 
		Hom-Jacobi-Jordan Algebras}
	\label{subsec:dim1}

We begin with the one-dimensional case, classifying 
Hom-Jacobi-Jordan algebras and computing their second cohomology 
sets with one-dimensional coefficients. We determine normal forms 
of the $2$-cocycles, showing triviality when the module's twisting 
map vanishes, laying the groundwork for the higher-dimensional 
classifications.

	\begin{proposition}\label{prop:1dim-class}
		Let $(J, [\cdot,\cdot], \alpha)$ be a one-dimensional regular 
		Hom-Jacobi-Jordan algebra with basis $\{e_1\}$. Then $[\cdot,\cdot] = 0$ 
		and there exists $a \in \mathbb{C}^*$ such that $\alpha(e_1) = a\, e_1$. 
		We denote this algebra by $J_1^0(a)$. Moreover, two one-dimensional 
		algebras $J_1^0(a)$ and $J_1^0(a')$ are isomorphic if and only if 
		$a = a'$.
	\end{proposition}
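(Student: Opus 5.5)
The plan is to handle the structure statement and the isomorphism statement separately, each by a short direct computation in the basis $\{e_1\}$.

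\textbf{Step 1: the form of $\alpha$ and of the bracket.} Since $J$ is one-dimensional, any linear map $J\to J$ is a scalar multiple of the identity. So $\alpha(e_1)=a\,e_1$ for some $a\in\mathbb{C}$, and regularity (bijectivity of $\alpha$) forces $a\neq 0$. Likewise $[e_1,e_1]=\lambda e_1$ for some $\lambda\in\mathbb{C}$. To show $\lambda=0$, I will evaluate the Hom-Jacobi identity \eqref{def:jacobi} at $x=y=z=e_1$. This gives $3[\alpha(e_1),[e_1,e_1]]=0$, that is, $3a\lambda^2 e_1=0$. Since $\operatorname{char}\mathbb{C}=0$ and $a\neq 0$, we get $\lambda=0$, so the bracket vanishes. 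Alternatively, one could invoke Theorem~\ref{thm:regular-nilpotent}: $J$ is nilpotent, and a one-dimensional algebra with $[e_1,e_1]\neq0$ satisfies $C^k(J)=J$ for all $k$. The direct computation is more self-contained, so I will use it. Multiplicativity \eqref{def:multiplicative} then holds trivially, since both sides are zero. Hence every such algebra is $J_1^0(a)$, and conversely each $J_1^0(a)$ with $a\in\mathbb{C}^*$ is a regular HJJ algebra.

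\textbf{Step 2: isomorphism classification.} If $a=a'$, the identity map is an isomorphism. Conversely, suppose $\phi\colon J_1^0(a)\to J_1^0(a')$ is a bijective homomorphism. Then $\phi(e_1)=c\,e_1'$ with $c\neq 0$, and the condition $\phi\circ\alpha=\alpha'\circ\phi$ evaluated on $e_1$ gives $ac\,e_1'=a'c\,e_1'$. Therefore $a=a'$. The bracket condition imposes nothing further, since both brackets are zero.

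There is no real obstacle here. The only point requiring care is the use of $a\neq0$ together with $\operatorname{char}\neq 3$ (automatic over $\mathbb{C}$) to kill $\lambda$. Note that this step genuinely needs regularity: if $a=0$, the Hom-Jacobi identity would allow any $\lambda$.
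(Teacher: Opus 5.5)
Your proof is correct and takes the same direct route as the paper, whose proof is the single line that the result follows from \eqref{def:multiplicative} and \eqref{def:jacobi} since $\dim J=1$. You fill in that computation properly: evaluating \eqref{def:jacobi} at $(e_1,e_1,e_1)$ gives $3a\lambda^2=0$, which kills $\lambda$ for every $a\neq0$, including $a=1$ where \eqref{def:multiplicative} alone would not suffice. Your argument that $\phi\circ\alpha=\alpha'\circ\phi$ forces $a=a'$ is also correct.
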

	
	\begin{proof}
		The result follows immediately from \eqref{def:multiplicative} and \eqref{def:jacobi}, since $\dim J = 1$.
	\end{proof}
	%%%%%%%%%%%%%%%%%%%%%%%%%%%%%%%%
	\begin{remark}
		The nonregular one-dimensional case corresponds to $\alpha = 0$,
		and yields exactly two algebras up to isomorphism: the abelian one
		and $J^{1z}_1$ defined by $[e_1, e_1] = e_1$.
	\end{remark}
	%%%%%%%%%%%%%%%%%%%%%%%%%%%%%%%%%%%%%%%%%%%%%%%%%%%%%%%%%%%%%%%%%
	\begin{proposition}\label{prop:1dim-reps-J1}
		Let $\{u_1\}$ be a basis of $J=J_1^0(a)$ and $\{v_1\}$ be a basis of $V=J_1^0(b)$.
		Then every nontrivial $2$-cocycle $(\rho,\theta)$ of $J$ with coefficients in $V$ is equivalent to the cocycle $C^1_{1,1}(a,a^2)$ given by  $\rho(u_1)=0$, and $\theta(u_1,u_1)=v_1$.
	\end{proposition}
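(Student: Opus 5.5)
The plan is to write a general $2$-cocycle in coordinates, reduce the four defining conditions \eqref{def:2cohain}--\eqref{def:rep3} to scalar equations, and then use the equivalence of Definition~\ref{def:equivalent-cocycles} (together with a rescaling of $v_1$) to normalize. By Proposition~\ref{prop:1dim-class}, both brackets vanish, $\alpha(u_1)=a u_1$ and $\beta(v_1)=b v_1$ with $a,b\in\mathbb{C}^*$. Every $2$-cocycle then has the form $\rho(u_1)v_1=\lambda v_1$ and $\theta(u_1,u_1)=t v_1$ for scalars $\lambda,t\in\mathbb{C}$.

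\textbf{Step 1: the conditions on $\rho$.} Condition \eqref{def:rep1} reads $a\lambda b=b\lambda$ on the one-dimensional space $V$. Condition \eqref{def:rep2} is more useful. Its left side vanishes since $[u_1,u_1]=0$, and the term $[\theta,\beta v]_V$ vanishes since $V$ is abelian. It therefore reduces to $-2a\lambda^2 v_1=0$. Since $\operatorname{char}\neq 2$ and $a\neq 0$, this forces $\lambda=0$, i.e.\ $\rho=0$. With $\rho=0$ the relation \eqref{def:rep1} holds trivially. Condition \eqref{def:rep3} is automatic because $[\cdot,\cdot]_V=0$.

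\textbf{Step 2: the conditions on $\theta$.} Condition \eqref{def:cocycle1} is automatic, since all brackets in $J$ vanish and $\rho=0$. The only remaining constraint is the cochain condition \eqref{def:2cohain}: $\beta\theta(u_1,u_1)=\theta(\alpha u_1,\alpha u_1)$, i.e.\ $bt=a^2t$. By Remark~\ref{rem:kcochain-eigenvalue}, a nonzero cocycle exists only when $b=a^2$, and then $t\in\mathbb{C}^*$ is arbitrary. This explains the label $C^1_{1,1}(a,a^2)$.

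\textbf{Step 3: normalization, the main obstacle.} Applying Proposition~\ref{prop:cocycle-equivalence-formulas} with $h(u_1)=s v_1$ gives $\rho'=\rho$, because $[\cdot,\cdot]_V=0$. It also gives $\theta'=\theta$, because $h([u_1,u_1])=0$, $\rho=0$ and $[h,h]_V=0$. So equivalence via $\Phi_h$ alone cannot change $t$, and the class of $(0,t)$ in $H^2_{\mathrm{na}}(J,V)$ is a single point for each $t$. To reach $\theta(u_1,u_1)=v_1$, I will therefore rescale the basis of $V$ by $v_1\mapsto t v_1$. This is an automorphism of $J_1^0(a^2)$, since it commutes with $\beta$ and preserves the zero bracket. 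The map $x+v\mapsto x+t^{-1}v$ is then an isomorphism of Hom-Jacobi-Jordan algebras from $(J\oplus V,[\cdot,\cdot]_{(0,t)})$ onto $(J\oplus V,[\cdot,\cdot]_{(0,1)})$. The delicate point is to state explicitly that ``equivalent'' in the proposition is understood up to this change of basis in $V$, that is, isomorphism of the resulting extensions. In the strict sense of Definition~\ref{def:equivalent-cocycles}, the nontrivial cocycles form the one-parameter family $(0,t)$, $t\in\mathbb{C}^*$, all carried to $C^1_{1,1}(a,a^2)$ by this rescaling.
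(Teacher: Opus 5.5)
Your argument is correct and follows the paper's proof: condition \eqref{def:rep2} with $[u_1,u_1]=0$ gives $-2a\lambda^2=0$, so $\rho=0$; the cochain condition \eqref{def:2cohain} then forces $b=a^2$ whenever $\theta\neq 0$; and rescaling $v_1$ normalizes $\theta(u_1,u_1)=v_1$. Your remark that this final rescaling is an automorphism of $V$, not an equivalence in the strict sense of Definition~\ref{def:equivalent-cocycles}, is accurate: under that definition the cocycles $(0,t)$, $t\in\mathbb{C}^*$, stay pairwise inequivalent, and you make explicit a looseness that the paper's proof passes over silently.
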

	\begin{proof}
		Let $\rho(u_1)v_1 = x_1 v_1$ and $\theta(u_1,u_1) = xv_1$. Since $[u_1,u_1]=0$, \eqref{def:rep2} yields $-2ax_1^2=0$, forcing $x_1=0$ and thus $\rho=0$. Then \eqref{def:2cohain} gives $\beta(\theta(u_1,u_1)) = a^2 \theta(u_1,u_1)$. By Remark~\ref{rem:kcochain-eigenvalue}, $\theta(u_1,u_1) \neq 0$ forces $b=a^2$.
 Rescaling $v_1$ sets $x=1$, yielding $C^1_{1,1}(a,a^2)$. Otherwise, the cocycle is trivial.
	\end{proof}	
	%%%%%%%%%%%%%%%%%%%%%%%%%%%%%%%%%%%%%%%%	
	\begin{proposition}\label{prop:1dim-regular-trivial-beta}
		Let $J = J^0_1(a)$ be a regular 1-dimensional Hom-Jacobi-Jordan algebra (i.e., $a \neq 0$) and let $(V, [\cdot, \cdot]_V, \beta)$ be a 1-dimensional module with a trivial twisting map $\beta = 0$. Then any 2-cocycle $(\rho, \theta)$ of $J$ with coefficients in $V$ is trivial.
	\end{proposition}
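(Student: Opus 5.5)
We will show directly that every $2$-cocycle $(\rho,\theta)$ is the zero pair $(0,0)$, which represents the base point of $H^2_{\mathrm{na}}(J,V)$. Fix bases $\{u_1\}$ of $J$ and $\{v_1\}$ of $V$. Since $V$ is one-dimensional, Proposition~\ref{prop:1dim-class} and the Remark following it leave two possibilities for its bracket: $[v_1,v_1]_V=0$, or $[v_1,v_1]_V=c\,v_1$ for some $c$. The argument below never uses the value of $[\cdot,\cdot]_V$, so we will not need to separate these cases. Write
\[
\rho(u_1)v_1=x_1v_1,\qquad \theta(u_1,u_1)=x\,v_1 .
\]
By Proposition~\ref{prop:1dim-class}, $\alpha(u_1)=a\,u_1$ with $a\neq 0$ and $[u_1,u_1]=0$.

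\emph{Step 1: $\theta=0$.} Condition~\eqref{def:2cohain} requires $\beta\circ\theta=\theta\circ\alpha^{\otimes 2}$. Evaluating at $(u_1,u_1)$ gives
\[
0=\beta(\theta(u_1,u_1))=\theta(au_1,au_1)=a^2x\,v_1 .
\]
Since $a\neq0$, this forces $x=0$. Equivalently, by Remark~\ref{rem:kcochain-eigenvalue}, $a^2\neq 0$ is not an eigenvalue of $\beta=0$.

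\emph{Step 2: $\rho=0$.} Condition~\eqref{def:rep1} at $x=u_1$ reads $\rho(\alpha(u_1))\circ\beta=\beta\circ\rho(u_1)$. Both sides vanish because $\beta=0$, so this gives no information. We therefore turn to~\eqref{def:rep2} with $x=y=u_1$. Its left-hand side is $\rho([u_1,u_1])\beta(v)=0$, and the bracket term $[\theta(u_1,u_1),\beta(v)]_V$ also vanishes because $\beta(v)=0$. What remains is
\[
0=-2\rho(au_1)\rho(u_1)v_1=-2a\,x_1^2\,v_1 .
\]
Since $\operatorname{char}\mathbb K\neq2$ and $a\neq0$, we get $x_1=0$.

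\emph{Step 3: the remaining conditions.} With $\rho=0$ and $\theta=0$, conditions~\eqref{def:cocycle1} and~\eqref{def:rep3} hold automatically. Hence $(0,0)$ is the only $2$-cocycle, and $H^2_{\mathrm{na}}(J,V)$ reduces to the single base point.

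The only potential obstacle is Step 2. Because $\beta=0$, the natural constraint~\eqref{def:rep1} is vacuous, so $\rho$ cannot be killed by the Hom-compatibility condition. One must instead see that the quadratic condition~\eqref{def:rep2} still survives. It does, because the $\beta(v)$ factors remove the left-hand side and the $\theta$-term together, leaving only the product of the two $\rho$ terms. This mirrors the argument in Proposition~\ref{prop:1dim-reps-J1}.
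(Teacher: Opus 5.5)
Your proof is correct and follows the paper's route. Condition~\eqref{def:2cohain} with $\beta=0$ forces $\theta=0$, and condition~\eqref{def:rep2} at $x=y=u_1$ reduces to $-2a x_1^2=0$, so $\rho=0$. You also check explicitly that the term $[\theta(u_1,u_1),\beta(v)]_V$ vanishes because $\beta=0$, so $V$ need not be abelian as in Proposition~\ref{prop:1dim-reps-J1}; the paper leaves this step implicit.
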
	
	\begin{proof}
		Since $[u_1,u_1]=0$ and $a\ne0$, condition~\eqref{def:rep2} gives 
	$\rho=0$ (as in Proposition~\ref{prop:1dim-reps-J1}); since moreover 
	$\beta=0$, condition~\eqref{def:2cohain} gives $\theta=0$.	
	\end{proof}	
	%%%%%%%%%%%%%%%%%%%%%%%%%%%%%%%%%%%%%%%%%%%%%%%%%
	%
	%%
	%%%%%%%%%%%%%%%%%%%%%%%%%
	%%%%%%%%%%%%%%%%%
	\subsection{Classification and Cohomology of 
		$2$-Dimensional Hom-Jacobi-Jordan Algebras}	\label{subsec:dim2}
		%%%%%%%%%%%%%%%%%%%%%%%%%%%%%%%%%%%%%%%%%%%%%%
	We classify $2$-dimensional Hom-Jacobi-Jordan algebras, treating the non-regular non-abelian and regular cases separately. We then compute the second cohomology sets $H^2_{\mathrm{na}}(J,V)$ for extensions where either $\dim J = 1$ and $\dim V = 2$ (with $V$ ranging over regular, zero-twist, and nilpotent-twist algebras), or $\dim J = 2$ is regular and $\dim V = 1$. These computations yield the explicit $2$-cocycles required in Subsection~\ref{subsec:dim3} to construct and classify the corresponding $3$-dimensional split extensions.
	
%%%%%%%%%%%%%%%%%%%%%%%%%%%%%%%%%%%%%%%%%%%%%%%%%%%%%%%%%%%%%%%%%%%%%%%%%	
\begin{theorem}\label{thm:2dim-nonregular-unified}
Every $2$-dimensional non-regular non-abelian Hom-Jacobi-Jordan algebra over $\mathbb{C}$ is isomorphic to exactly one of the following pairwise non-isomorphic algebras.

\renewcommand{\arraystretch}{1.3}
\begin{longtable}{|c|l|c|}
	\caption{$2$-dimensional non-regular non-abelian Hom-Jacobi-Jordan algebras.} \label{tab:hjj-2d-nonregular} \\
	\hline
	\textbf{Name} & \textbf{Nonzero products} & \textbf{Twisting map $\alpha$} \\
	\hline
	\endfirsthead
	\hline
	\textbf{Name} & \textbf{Nonzero products} & \textbf{Twisting map $\alpha$} \\
	\hline
	\endhead
	\hline
	\endfoot
	\hline
	\endlastfoot
	\multicolumn{3}{|c|}{\textit{\small Zero-twist case ($\alpha = 0$)}} \\ \hline
	$J_2^{1z}$ & $[e_1, e_1] = e_2$ & $0$ \\ \hline
	$J_2^{2z}$ & $[e_1, e_1] = e_1,\ [e_2, e_2] = e_1$ & $0$ \\ \hline
	$J_2^{3z}(\mu)$ & $[e_1, e_1] = e_1,\ [e_1, e_2] = \mu e_2$ & $0$ \\ \hline
	$J_2^{4z}(\lambda)$ & $[e_1, e_1] = e_1,\ [e_1, e_2] = \lambda e_2,\ [e_2, e_2] = e_2$ & $0$ \\ \hline
	$J_2^{5z}$ & $[e_1, e_1] = e_1 + e_2,\ [e_1, e_2] = \tfrac{1}{2} e_2$ & $0$ \\ \hline
	$J_2^{6z}$ & $[e_1, e_2] = e_1,\ [e_2, e_2] = -e_2$ & $0$ \\ \hline
	$J_2^{7z}$ & $[e_1, e_1] = e_2,\ [e_1, e_2] = e_1,\ [e_2, e_2] = -e_2$ & $0$ \\ \hline
	$J_2^{8z}$ & $[e_1, e_2] = e_1$ & $0$ \\ \hline
	\multicolumn{3}{|c|}{\textit{\small Decomposable case ($\mathrm{rank}(\alpha) = 1$, diagonalizable)}} \\ \hline
	$J_{1,1}^{1n}(a)$ & $[e_2, e_2] = e_2$ & $\mathrm{diag}(a,0),\ a \neq 0$ \\ \hline
	\multicolumn{3}{|c|}{\textit{\small Nilpotent-twist case ($\mathrm{rank}(\alpha) = 1$, nilpotent)}} \\ \hline
	$J_2^{1n}$ & $[e_1, e_1] = e_2$ & $\begin{pmatrix} 0 & 0 \\ 1 & 0 \end{pmatrix}$ \\ \hline
	$J_2^{2n}$ & $[e_1, e_2] = e_2$ & $\begin{pmatrix} 0 & 0 \\ 1 & 0 \end{pmatrix}$ \\ \hline
\end{longtable}
\end{theorem}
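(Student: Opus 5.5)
The plan is to split by the rank of $\alpha$. Since $\dim J=2$ and $J$ is nonregular, $\alpha$ has rank $0$ or $1$. Isomorphisms must intertwine the twisting maps (Definition~\ref{def:homo}), so $\alpha$ can first be put in Jordan normal form. This gives three cases: $\alpha=0$; $\alpha=\mathrm{diag}(a,0)$ with $a\neq 0$; or $\alpha$ the nilpotent Jordan block $\alpha(e_1)=e_2$, $\alpha(e_2)=0$. These cases are already pairwise non-isomorphic, because the conjugacy class of $\alpha$ is an isomorphism invariant. Within each case I then classify the brackets up to the subgroup $G_\alpha=\{\phi\in GL_2(\mathbb{C}) : \phi\alpha=\alpha\phi\}$.

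\emph{Zero-twist case.} When $\alpha=0$, both \eqref{def:multiplicative} and \eqref{def:jacobi} hold trivially, since every term contains a factor $\alpha$. So these algebras are exactly the $2$-dimensional commutative algebras, and $G_\alpha=GL_2(\mathbb{C})$. I would invoke the known classification of $2$-dimensional commutative (nonassociative) algebras over $\mathbb{C}$ and keep the nonabelian ones. To check completeness directly, I would sort by whether some $x$ has $[x,x]$ independent of $x$, and by the dimension of $[J,J]$. Then I would normalize using idempotents when they exist, and otherwise nilpotent elements, reading off the parameters $\mu,\lambda$. Non-isomorphism would be shown with invariants: $\dim[J,J]$, the number of idempotents, the existence of $x\neq 0$ with $[x,J]=0$, and the eigenvalues of $\mathrm{ad}_e$ on idempotents $e$, which give $\mu$ and $\lambda$.

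\emph{Rank-one diagonalizable case.} Take $\alpha(e_1)=ae_1$ with $a\neq0$ and $\alpha(e_2)=0$. Multiplicativity gives three constraints: $[e_1,e_1]$ lies in the $a^2$-eigenspace of $\alpha$, while $\alpha([e_1,e_2])=0$ and $\alpha([e_2,e_2])=0$ put $[e_1,e_2]$ and $[e_2,e_2]$ in $\mathbb{C}e_2$. If $a\neq1$, then $[e_1,e_1]=0$. If $a=1$, then $[e_1,e_1]=ce_1$, and \eqref{def:jacobi} at $(e_1,e_1,e_1)$ gives $3c^2e_1=0$, so $c=0$ again. Next, write $[e_1,e_2]=pe_2$ and $[e_2,e_2]=qe_2$. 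Evaluating \eqref{def:jacobi} at $(e_1,e_1,e_2)$ gives $2a\,p^2e_2=0$, so $p=0$. This leaves $[e_2,e_2]=qe_2$, which must be nonzero for $J$ to be nonabelian; rescaling $e_2$ (an element of $G_\alpha$) gives $q=1$. The result is $J^{1n}_{1,1}(a)$, and $a$ is an invariant because it is the nonzero eigenvalue of $\alpha$.

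\emph{Nilpotent case.} Take $\alpha(e_1)=e_2$ and $\alpha(e_2)=0$. Multiplicativity gives $[e_1,e_2],[e_2,e_2]\in\ker\alpha=\mathbb{C}e_2$ and $\alpha([e_1,e_1])=[e_2,e_2]$. Write $[e_1,e_1]=xe_1+ye_2$, so that $[e_2,e_2]=xe_2$, and $[e_1,e_2]=pe_2$. Then $\alpha([e_2,e_2])=0=x\alpha(e_2)$ holds automatically. The Hom-Jacobi identity at $(e_1,e_1,e_1)$ reads $3[e_2,[e_1,e_1]]=0$, which gives $x(p+x)=0$ together with the companion conditions. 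I would expand all of \eqref{def:jacobi} to get polynomial constraints on $(x,y,p)$, which I expect to force $x=0$. The group $G_\alpha$ consists of the maps $e_1\mapsto se_1+te_2$, $e_2\mapsto se_2$ with $s\neq 0$. I would use it to normalize the solutions to $[e_1,e_1]=e_2$ or $[e_1,e_2]=e_2$. These two are separated by the invariant of whether $[J,\ker\alpha]=0$.

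The main obstacle is the zero-twist case. Showing that the list of $2$-dimensional commutative algebras is complete and irredundant requires a careful orbit analysis under $GL_2(\mathbb{C})$, including identifying the exceptional parameter values of $J_2^{3z}(\mu)$ and $J_2^{4z}(\lambda)$ that coincide with other entries. I would cite the existing classification of $2$-dimensional algebras rather than redo that orbit computation by hand, and check that the table matches its commutative part.
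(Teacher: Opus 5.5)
\textbf{The statement is false for $\alpha=0$, so your plan cannot be completed there.}

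Your two rank-one cases are correct, and more explicit than the paper's appeal to ``triviality of $2$-cocycles''. There is one slip in the nilpotent case. At $(e_1,e_1,e_1)$ the Hom-Jacobi identity gives $3x(p+y)e_2=0$, not $x(p+x)=0$. The evaluation that actually forces $x=0$ is $(e_1,e_2,e_2)$, which gives $x^2e_2=0$; the paper uses this one too. After that, $G_\alpha$ acts by $(y,p)\mapsto(sy+2tp,\,sp)$, and this yields exactly $J_2^{1n}$ and $J_2^{2n}$.

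The genuine gap is the zero-twist case. Your step ``check that the table matches the commutative part of the known classification'' must fail, because the zero-twist list is incomplete.

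\emph{Dimension count.} For $\alpha=0$ every commutative algebra qualifies. Commutative structures on $\mathbb{C}^2$ form a $6$-dimensional space, and $GL_2(\mathbb{C})$ is $4$-dimensional. The listed algebras are finitely many points plus two one-parameter families. So the structures isomorphic to a listed algebra form a set of dimension at most $5$, and a generic commutative algebra is not on the list.

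\emph{Concrete counterexample.} Take $[e_1,e_1]=e_1$, $[e_2,e_2]=e_2$, $[e_1,e_2]=e_1+e_2$, with $\alpha=0$. Its nonzero idempotents are exactly $e_1$, $e_2$ and $\tfrac13(e_1+e_2)$.
\begin{itemize}
\item Counting idempotents separates it from $J_2^{1z},J_2^{5z},J_2^{8z}$ (none) and from $J_2^{2z},J_2^{6z}$ and $J_2^{3z}(\mu)$ with $\mu\neq\tfrac12$ (one). It also separates it from $J_2^{3z}(\tfrac12)$ (infinitely many) and from $J_2^{4z}(\tfrac12)$ (two).
\item In $J_2^{4z}(\lambda)$ the idempotents $e_1,e_2$ satisfy $[e_1,e_2]=\lambda e_2$. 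Here no product of two distinct idempotents is a scalar multiple, possibly zero, of either factor. This rules out $J_2^{4z}(\lambda)$ for every $\lambda$.
\item The three idempotents $-e_2$ and $\pm\tfrac{\sqrt3}{2}e_1+\tfrac12e_2$ of $J_2^{7z}$ sum to $0$. Here $e_1+e_2+\tfrac13(e_1+e_2)\neq 0$, which rules out $J_2^{7z}$.
\end{itemize}

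Your own normalization step already exposes the problem. Once two independent idempotents $e_1,e_2$ exist, $[e_1,e_2]=\sigma e_1+\tau e_2$ carries two free parameters. The only remaining freedom is permuting the (generically three) idempotents. So $J_2^{4z}(\lambda)$ is just the slice $\sigma=0$ of a genuinely two-parameter family.

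The paper's proof has the same defect, since it simply asserts the reduction to eight zero-twist representatives. What your argument actually proves is the rank-one part of the table. The zero-twist part would have to be replaced by the full classification of $2$-dimensional commutative algebras, which by the parameter count above contains two-parameter families.
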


\begin{proof}
Let $(A, [\cdot, \cdot], \alpha)$ be a $2$-dimensional non-regular HJJ algebra over $\mathbb{C}$. Since $\alpha$ is non-bijective, $\mathrm{rank}(\alpha) \in \{0, 1\}$.

\textbf{Case 1: $\alpha = 0$ (Zero-twist case).}
The homomorphism and Hom-Jacobi conditions hold trivially, reducing the classification to complex $2$-dimensional commutative algebras. Referring to the complete classification of $2$-dimensional complex algebras \cite{Bekbaev:2023}, imposing the commutativity condition $[e_1, e_2] = [e_2, e_1]$ eliminates the non-commutative families $A_1, A_6, A_7, A_8, A_9$. Through explicit linear basis transformations and reparametrizations on the remaining admissible families, we reduce them to the eight canonical zero-twist Hom-Jacobi-Jordan representatives $J_2^{1z}$--$J_2^{8z}$.

\textbf{Case 2: $\mathrm{rank}(\alpha) = 1$, diagonalizable.}
Up to isomorphism, $\alpha = \mathrm{diag}(a,0)$ with $a \neq 0$. Thus $A = J \oplus V$, where $J \cong J^0_1(a)$ is a $1$-dimensional regular HJJ algebra and $\dim V = 1$ with $\alpha|_V = 0$. The triviality of $2$-cocycles restricts the non-zero bracket on $V$ to $[e_2, e_2] = e_2$, yielding $J_{1,1}^{1n}(a)$.

\textbf{Case 3: $\mathrm{rank}(\alpha) = 1$, nilpotent.}
In Jordan form, $\alpha(e_1) = e_2$ and $\alpha(e_2) = 0$. Multiplicativity forces $[e_1, e_1] = pe_1 + be_2$, $[e_1, e_2] = de_2$, and $[e_2, e_2] = pe_2$. Evaluating the Hom-Jacobi identity at $(e_1, e_2, e_2)$ forces $p = 0$. Automorphisms commuting with $\alpha$ take the form $\varphi = \begin{pmatrix} x & 0 \\ z & x \end{pmatrix}$ ($x \neq 0$), under which $(b, d)$ normalizes to $(1,0)$ ($J_2^{1n}$) or $(0,1)$ ($J_2^{2n}$), excluding the abelian case $(0,0)$.

\textbf{Pairwise Non-isomorphism.}
The primary classes are partitioned by $\mathrm{rank}(\alpha)$ and its spectral properties. Within the zero-twist branch, pairwise non-isomorphism follows from algebraic invariants: $\dim(A^2)$, existence of non-zero idempotents, nilpotency ($A^3 = 0$), and the spectral character of the multiplication operators $L_x$; these invariants are read off directly from Table~\ref{tab:hjj-2d-nonregular} and separate all eight zero-twist representatives.
\end{proof}
\begin{remark}[Isomorphism conditions]
The parameterized families satisfy the following isomorphism conditions:
$J_2^{3z}(\mu) \cong J_2^{3z}(\mu')$ if and only if $\mu = \mu'$; and
$J_2^{4z}(\lambda) \cong J_2^{4z}(\lambda')$ if and only if $\lambda' = \lambda$ or $\lambda' = 1 - \lambda$.
\end{remark}
%%%%%%%%%%%%%%%%%%%%%%%%%%%%%%%%%%%%%%%
%%%%%%%%%%%%%%%%%%%%%%%%%%%%%	
%%%%%%%%%%%%%%%%%%%%%%%%%%%%%%%%%%%%%	
	%%%%%%%%%%%%%%%%%%%%%%%%%%%%%%%%%%%%%%
	%\paragraph{Convention on representations.}
	\subheading{Convention on representations.}
	In the classification tables below, when only $\rho(u_1)v_1 = y_1v_2$ (or $\rho(u_1)v_2 = z_1v_1$) is listed, it is understood that the other basis vector is mapped to zero, i.e., $\rho(u_1)v_2 = 0$ (resp. $\rho(u_1)v_1 = 0$).
	\begin{theorem}\label{thm:normal-form-spec}
	Let $\{u_1\}$ be a basis of $J$ and $\{v_1, v_2\}$ be a basis of $V$, where
	\[
	J \cong J^0_1(a) : \quad [u_1, u_1] = 0, \quad \alpha(u_1) = a u_1, \qquad a \neq 0,
	\]
	and $[v_1,v_1]_V = \lambda v_2$, $\lambda \in \{0,1\}$, and $\beta = \mathrm{diag}(b,c)$ with $b,c \neq 0$. Write
	\[
	\rho(u_1) = \begin{pmatrix} x_1 & z_1 \\ y_1 & t_1 \end{pmatrix} \quad \text{and} \quad \theta(u_1,u_1) = xv_1 + yv_2.
	\]
	
		Any $2$-cocycle $(\rho,\theta)$ of $J$ with coefficients in $V$ satisfies one of the following  forms:
		
		\vspace{0.4cm}
		\noindent\textbf{1. Case $\lambda = 0$:}
		
		\begingroup
		\small
		\renewcommand{\arraystretch}{1.3}
		\begin{longtable}{cccc}
			
			\toprule
			\textbf{Conditions on $a,b,c$} & \textbf{Sub-conditions} & $\rho(u_1)$ & $\theta(u_1,u_1)$ \\
			\midrule
			\endhead
			
			\multirow{4}{*}{$a\neq \pm1,\ c\neq ab,\ b\neq ac$}
			& $a^2=b\neq c$ & \multirow{4}{*}{$0$} & $xv_1$ \\
			\cmidrule{2-2}\cmidrule{4-4}\\
			%& $a^2=c\neq b$ & & $yv_2$ \\
			%\cmidrule{2-2}\cmidrule{4-4}
			& $a^2=b=c$ & & $xv_1+yv_2$ \\
			\cmidrule{2-2}\cmidrule{4-4}
			& $a^2\notin\{b,c\}$ & & $0$ \\
			\midrule
			
			\multirow{3}{*}{$a\neq \pm1,\ c=ab$}
			& $ b=a$ & $\rho(u_1)v_1= y_1v_2$ & $yv_2$ \\
			\cmidrule{2-4}
			& $ b\neq a$ & $\rho(u_1)v_1= y_1v_2$ & $0$ \\
			\cmidrule{2-4}
			& $ b=a^2$ & $0$ & $xv_1$ \\
			\midrule
			
		%	\multirow{3}{*}{$a\neq \pm1,\ b=ac$}
			%& $ c=a$ & $\rho(u_1)v_2= z_1v_1$ & $xv_1$ \\
		%	\cmidrule{2-4}
		%	& $c\notin \{a,a^2\}$ & $\rho(u_1)v_2= z_1v_1$  & $0$ \\
		%	\cmidrule{2-4}
		%	& $ c=a^2$ & $0$ & $yv_2$ \\
		%	\midrule
			
			\multirow{3}{*}{$a=1,\ b\neq c$}
			& $b=1$ & \multirow{3}{*}{$0$} & $xv_1$ \\
			\cmidrule{2-2}\cmidrule{4-4}
			%& $c=1$ & & $yv_2$ \\
			%\cmidrule{2-2}\cmidrule{4-4}
			& $1\notin\{b,c\}$ & & $0$ \\
			\midrule
			
\multirow{5}{*}{$a=1,\ b=c$}
& \multirow{3}{*}{$b=1$} & $0$ & $xv_1+yv_2$ \\
\cmidrule{3-4}
& & $\rho(u_1)v_1= x_1v_1+y_1v_2$,  $\rho(u_1)v_2=-\frac{x_1}{y_1}\left( x_1v_1+y_1v_2\right) $ & $y\,(x_1v_1+y_1v_2)$ \\
\cmidrule{3-4}
& & $\rho(u_1)v_2= z_1v_1$  & $xv_1$ \\
\cmidrule{2-4}
& \multirow{2}{*}{$b\neq 1$} & $\rho(u_1)v_1= x_1v_1+y_1v_2$,  $\rho(u_1)v_2=-\frac{x_1}{y_1}\left( x_1v_1+y_1v_2\right) $ & $0$ \\
\cmidrule{3-4}
& & $\rho(u_1)v_2= z_1v_1$  & $0$ \\
\midrule			
			\multirow{4}{*}{$a=-1,\ c\neq-b$}
			& $b=1,\ c\neq1$ & \multirow{4}{*}{$0$} & $xv_1$ \\
			\cmidrule{2-2}\cmidrule{4-4}
			%& $c=1,\ b\neq1$ & & $yv_2$ \\
		%	\cmidrule{2-2}\cmidrule{4-4}
			& $b=c=1$ & & $xv_1+yv_2$ \\
			\cmidrule{2-2}\cmidrule{4-4}
			& $1\notin\{b,c\}$ & & $0$ \\
			\midrule
			
			\multirow{7}{*}{$a=-1,\ c=-b$}
			& \multirow{3}{*}{$b=1$} & $0$ & $xv_1$ \\
			\cmidrule{3-4}
			& & $\rho(u_1)v_1= y_1v_2$ & $0$ \\
			\cmidrule{3-4}
			& & $\rho(u_1)v_2= z_1v_1$  & $xv_1$ \\
			%\cmidrule{2-4}
			%& \multirow{3}{*}{$b=-1$} & $0$ & $yv_2$ \\
			\cmidrule{3-4}
			%& &$\rho(u_1)v_1= y_1v_2$  & $yv_2$ \\
			%\cmidrule{3-4}
			%& & $\rho(u_1)v_2= z_1v_1$ & $0$ \\
			\cmidrule{2-4}
			& \multirow{3}{*}{$b\neq \pm 1$} & $\rho(u_1)v_1= y_1v_2$  & $0$ \\
			\cmidrule{3-4}
			& & $\rho(u_1)v_2= z_1v_1$ & $0$ \\
			\bottomrule
		\caption{Form of $2$-cocycles for $\lambda = 0$.}
			\label{tab:normal-forms-lambda-0}\\
		\end{longtable}
		\endgroup
		\vspace{0.4cm}
		\noindent\textbf{2. Case $\lambda = 1$:}
		
	\begin{center}
		\renewcommand{\arraystretch}{1.5}
		\begin{tabular}{cccc}
			\toprule
			\textbf{Conditions on $a,b,c$} & \textbf{Sub-conditions} & $\rho(u_1)$ & $\theta(u_1,u_1)$ \\
			\midrule
		\multirow{1}{*}{$c=ab$} & $b=a$ & $\rho(u_1)v_1=y_1v_2$ & $yv_2$ \\
		\midrule
		\multirow{2}{*}{$c\neq ab$} & $c=a^2$ & $0$ & $yv_2$ \\
		\cmidrule{2-4}
		& $b=a^2$ & $0$ & $xv_1$ \\	
			\bottomrule
		\end{tabular}
	\end{center}
	\captionof{table}{Form of $2$-cocycles for $\lambda = 1$.}
		\label{tab:normal-forms-lambda-1}
		The remaining case $a\neq\pm1,\ b=ac$ is obtained from 
		$a\neq\pm1,\ c=ab$ after the change of basis $(v_1,v_2)\mapsto(v_2,v_1)$.
	\end{theorem}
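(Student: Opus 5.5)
The plan is to write out the four defining conditions of a $2$-cocycle, \eqref{def:2cohain}, \eqref{def:rep1}, \eqref{def:rep2} and \eqref{def:rep3}, in coordinates on the bases $\{u_1\}$, $\{v_1,v_2\}$. Then solve the resulting polynomial system case by case in $(a,b,c,\lambda)$. Since $J$ is one-dimensional with $[u_1,u_1]=0$, condition~\eqref{def:cocycle1} reduces to $3\rho(au_1)\theta(u_1,u_1)=0$, that is, $\rho(u_1)\theta(u_1,u_1)=0$ (using $a\neq0$ and $\operatorname{char}\neq 3$). The conditions therefore become:

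\begin{enumerate}
\item[(i)] $\beta\,\theta(u_1,u_1)=a^2\theta(u_1,u_1)$, i.e.\ $(b-a^2)x=0$ and $(c-a^2)y=0$, as in Remark~\ref{rem:kcochain-eigenvalue};
\item[(ii)] from \eqref{def:rep1}, $a\,\rho(u_1)\beta=\beta\,\rho(u_1)$, i.e.\ $(ab-b)x_1=0$, $(ac-c)t_1=0$, $(ac-b)z_1=0$ and $(ab-c)y_1=0$;
\item[(iii)] from \eqref{def:rep2} with $[u_1,u_1]=0$, $2a\,\rho(u_1)^2=-[\theta(u_1,u_1),\beta(\cdot)]_V$;
\item[(iv)] $\rho(u_1)\theta(u_1,u_1)=0$;
\item[(v)] from \eqref{def:rep3} for $\lambda=1$, $a\lambda\,\rho(u_1)v_2=-2\lambda[bv_1,\rho(u_1)v_1]_V$ and the analogous relations on $(v_1,v_2)$ and $(v_2,v_2)$.
\end{enumerate}

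For $\lambda=0$ the right side of (iii) vanishes, so $\rho(u_1)$ must be a square-zero matrix.

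The case split is driven by (ii). If $a\neq\pm1$ and $b\neq ac$, $c\neq ab$, then all entries of $\rho(u_1)$ vanish except possibly $x_1,t_1$ when $a=1$, which is excluded here. Hence $\rho=0$, and (i) alone gives the three sub-rows according to whether $a^2$ equals $b$, $c$, or both; the $a^2=c$ row is obtained by the basis swap. If $c=ab$ and $a\neq\pm1$, only $y_1$ survives. This $\rho$ is automatically square-zero, and (iv) forces $y_1x=0$, so either $\rho=0$ with $\theta=xv_1$ ($b=a^2$), or $\theta\in\langle v_2\rangle$ with $c=a^2$, i.e.\ $b=a$. For $a=1$, (ii) allows $x_1,t_1$ free and $y_1,z_1$ free exactly when $b=c$. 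A square-zero $2\times2$ matrix is either $0$, or a rank-one nilpotent $\begin{pmatrix}x_1&z_1\\y_1&-x_1\end{pmatrix}$ with $x_1^2+y_1z_1=0$. This yields the listed parametrisations: $y_1\neq0$ gives $z_1=-x_1^2/y_1$, and $y_1=0$ gives $x_1=0$ with only $z_1$. Condition (iv) then forces $\theta$ into $\ker\rho(u_1)=\operatorname{Im}\rho(u_1)$, which is the stated $y(x_1v_1+y_1v_2)$ or $xv_1$, and (i) requires $b=1$ for $\theta\neq0$. For $a=-1$, (ii) kills $x_1,t_1$ and allows $y_1$ (resp.\ $z_1$) only when $c=-b$. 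Square-zero forces $y_1z_1=0$, and then (iv) with (i) ($b=1$ or $c=1$) gives the remaining rows, using the swap symmetry $v_1\leftrightarrow v_2$ to omit mirrored rows.

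For $\lambda=1$ I add (v) and the nonzero right side of (iii). By multiplicativity of $[\cdot,\cdot]_V$, $\beta[v_1,v_1]=[\beta v_1,\beta v_1]$ forces $c=b^2$, which I use throughout. Condition (iii) becomes $2a\rho(u_1)^2v_1=-x\,b\,\lambda v_2$ and $2a\rho(u_1)^2v_2=0$. Condition (v) on $(v_1,v_1)$ gives $a\,\rho(u_1)v_2=-2b[v_1,\rho(u_1)v_1]_V=-2b x_1 v_2$, so $z_1=0$ and $at_1=-2bx_1$. The pairs $(v_1,v_2)$ and $(v_2,v_2)$ give $[v_1,\rho(u_1)v_2]=0$, hence $z_1=0$ again. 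Combining these with (ii) should eliminate $x_1,t_1$ (since $x_1\neq0$ needs $a=1$, and then $t_1=-2bx_1$ with $t_1\neq0$ needing $c=c$; I will have to check that (iii) then fails). This leaves only $y_1$, allowed iff $c=ab$, i.e.\ $b=a$ given $c=b^2$. Then (iii) forces $x=0$ and (i) forces $c=a^2$. The remaining rows come from $\rho=0$ and (i).

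The main obstacle is bookkeeping rather than ideas. In the degenerate overlaps ($a=1$ with $b=c$; $\lambda=1$ with $x_1\neq0$) I must make sure that every branch of the square-zero/rank-one analysis is either listed or correctly ruled out. I must also use the $v_1\leftrightarrow v_2$ symmetry only when $\lambda=0$, since it breaks the bracket when $\lambda=1$.
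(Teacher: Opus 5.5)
Your proposal is correct and takes essentially the paper's approach: write \eqref{def:2cohain}, \eqref{def:rep1}, \eqref{def:rep2}, \eqref{def:cocycle1} and \eqref{def:rep3} in coordinates, as in \eqref{eq:proof-cond1}--\eqref{eq:proof-cond4}, and split cases by the constraints from \eqref{def:rep1}. The check you leave open for $\lambda=1$ goes through: once $z_1=0$, \eqref{def:rep2} gives $x_1^2=t_1^2=0$ and hence $x=0$, and you also need $x=0$ in the $\rho=0$ branch (e.g.\ $a=-1$, $b=c=1$, where (i) alone would allow $xv_1+yv_2$), so the $\lambda=1$ row $b=a^2$, $\theta=xv_1$ occurs only with $x=0$.
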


%	=================================================================
\begin{proof}
	Condition~\eqref{def:rep1} yields $a\rho(u_1)\beta = \beta\rho(u_1)$, which gives
	\begin{equation}\label{eq:proof-cond1}
		x_1 b(a-1) = 0, \quad y_1(ab-c) = 0, \quad z_1(ac-b) = 0, \quad t_1 c(a-1) = 0.
	\end{equation}
	
	Since $[v_1,v_1]_V = \lambda v_2$ with $\lambda \in \{0,1\}$, condition~\eqref{def:rep3} yields
	\begin{equation}\label{eq:proof-cond2}
		a\lambda(z_1 v_1 + t_1 v_2) = -2\lambda b x_1 v_2.
	\end{equation}
	If $\lambda \neq 0$, this forces $z_1 = 0$ and $a t_1 = -2b x_1$.
	
	Condition~\eqref{def:rep2} gives $-2a A_1^2 v_1 = b x \lambda v_2$ and $-2a A_1^2 v_2 = 0$. Evaluating these equations yields
	\begin{equation}\label{eq:proof-cond3}
		x_1^2 + y_1 z_1 = 0, \quad y_1(x_1+t_1) = -\lambda\frac{bx}{2a}, \quad z_1(x_1+t_1) = 0, \quad t_1^2 + y_1 z_1 = 0.
	\end{equation}
	By Remark~\ref{rem:kcochain-eigenvalue}, $\theta(u_1,u_1) \neq 0$ if and only if $a^2 \in \{b,c\}$, in which case it is an eigenvector of $\beta$ associated to the eigenvalue $a^2$. By condition~\eqref{def:cocycle1}, we have $\rho(u_1)\theta(u_1,u_1) = 0$, that is,
	\begin{equation}\label{eq:proof-cond4}
		x_1 x + z_1 y = 0, \qquad y_1 x + t_1 y = 0.
	\end{equation}
	
	We illustrate the method on one representative case; the remaining entries follow analogously. For $a^2=b\neq c$, $\lambda=0$, $a\neq\pm1$, $c\neq ab$, $b\neq ac$: since $b\neq0$, $c\neq0$, $a\neq1$, $ab\neq c$, and $ac\neq b$, equation~\eqref{eq:proof-cond1} forces $x_1=y_1=z_1=t_1=0$, i.e.\ $\rho(u_1)=0$; equations~\eqref{eq:proof-cond3} and~\eqref{eq:proof-cond4} are then automatically satisfied. By Remark~\ref{rem:kcochain-eigenvalue}, since $a^2=b\neq c$, $\theta(u_1,u_1)$ is an eigenvector of $\beta$ for the eigenvalue $b$, hence $\theta(u_1,u_1)=xv_1$. The remaining cases in Tables~\ref{tab:normal-forms-lambda-0} and~\ref{tab:normal-forms-lambda-1} are treated by the same direct analysis of equations~\eqref{eq:proof-cond1}--\eqref{eq:proof-cond4}.
\end{proof}
%============================================================================
\begin{theorem}\label{thm:normal-form-jordan}
	Let $\{u_1\}$ be a basis of $J$ and $\{v_1,v_2\}$ be a basis of $V$, where
	\[
	J \cong J^0_1(a) \colon \quad [u_1,u_1]=0, \quad \alpha(u_1)=au_1, \qquad a\neq0,
	\]
	and $[v_1,v_1]_V=\lambda v_2$ with $\lambda\in\{0,1\}$, $\displaystyle \beta=\begin{pmatrix}b&0\\ 1&b\end{pmatrix}$, $b\neq0$. Write
	\[
	\rho(u_1)=\begin{pmatrix}x_1&z_1\\ y_1&t_1\end{pmatrix}, \qquad \theta(u_1,u_1)=xv_1+yv_2.
	\]
	Any $2$-cocycle $(\rho,\theta)$ of $J$ with coefficients in $V$ satisfies one of the following forms, \emph{independently of $\lambda$}:
	
	\begin{center}
		\renewcommand{\arraystretch}{1.6}
		\begin{tabular}{cccc}
			\toprule
			\textbf{Conditions on $a,b$} & \textbf{Sub-conditions} & $\rho(u_1)$ & $\theta(u_1,u_1)$ \\
			\midrule
			\multirow{2}{*}{$a\neq1$} & $b=a^2$ & \multirow{2}{*}{$0$} & $yv_2$ \\
			\cmidrule{2-2}\cmidrule{4-4}
			& $b\neq a^2$ & & $0$ \\
			\midrule
			\multirow{2}{*}{$a=1$} & $b=1$ & $	\rho(u_1)v_1=y_1v_2 $ & $yv_2$ \\
			\cmidrule{2-4}
			& $b\neq1$ & $	\rho(u_1)v_1=y_1v_2 $  & $0$\\
			\bottomrule
		\end{tabular}
	%	\caption{Form of $2$-cocycles for $\lambda = 0$.}
		\captionof{table}{Forms of $2$-cocycles of $J^0_1(a)$ with coefficients in $V$, $\beta$ a Jordan block.}
		\label{tab:normal-forms-jordan}
	\end{center}
\end{theorem}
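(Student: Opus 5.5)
The plan is to repeat, for the Jordan block $\beta=\begin{pmatrix}b&0\\1&b\end{pmatrix}$ (so $\beta v_1=bv_1+v_2$ and $\beta v_2=bv_2$), the direct analysis used in Theorem~\ref{thm:normal-form-spec}. I would write down conditions \eqref{def:2cohain}, \eqref{def:rep1}, \eqref{def:rep2}, \eqref{def:cocycle1} and \eqref{def:rep3} in coordinates for $\rho(u_1)=\begin{pmatrix}x_1&z_1\\ y_1&t_1\end{pmatrix}$ and $\theta(u_1,u_1)=xv_1+yv_2$. I would then solve them in the order \eqref{def:rep1}, then \eqref{def:rep2}, then \eqref{def:2cohain}, and finally check \eqref{def:rep3} and \eqref{def:cocycle1}. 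The point of this order is that the linear condition \eqref{def:rep1} already kills most of $\rho$.

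\textbf{Step 1 (condition \eqref{def:rep1}).} Since $\alpha(u_1)=au_1$, condition \eqref{def:rep1} reads $a\,\rho(u_1)\beta=\beta\,\rho(u_1)$. Comparing entries gives
\[
a(bx_1+z_1)=bx_1,\quad abz_1=bz_1,\quad a(by_1+t_1)=x_1+by_1,\quad abt_1=z_1+bt_1 .
\]
\begin{itemize}
\item If $a\neq1$: since $b\neq0$, the second equation gives $z_1=0$. The fourth then gives $t_1=0$, the first gives $x_1=0$, and the third gives $y_1=0$. Hence $\rho=0$.
\item If $a=1$: the first equation gives $z_1=0$ and the third gives $t_1=x_1$, so $\rho(u_1)=\begin{pmatrix}x_1&0\\ y_1&x_1\end{pmatrix}$.
\end{itemize}

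\textbf{Step 2 (condition \eqref{def:rep2}).} Since $[u_1,u_1]=0$, condition \eqref{def:rep2} becomes $2a\,\rho(u_1)^2v=-[\theta(u_1,u_1),\beta v]_V$ for all $v\in V$. The only nonzero product in $V$ is $[v_1,v_1]_V=\lambda v_2$. Hence the right-hand side equals $-\lambda bx\,v_2$ for $v=v_1$ and vanishes for $v=v_2$. For $a=1$ we have $\rho(u_1)^2=\begin{pmatrix}x_1^2&0\\ 2x_1y_1&x_1^2\end{pmatrix}$. The $v_2$-equation forces $x_1^2=0$, so $x_1=0$ and $\rho(u_1)v_1=y_1v_2$, with $\rho(u_1)^2=0$. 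The $v_1$-equation then reduces to $\lambda bx=0$.

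\textbf{Step 3 (condition \eqref{def:2cohain}).} By Remark~\ref{rem:kcochain-eigenvalue}, $\theta(u_1,u_1)\neq0$ only if it is an eigenvector of $\beta$ for the eigenvalue $a^2$. The Jordan block has the single eigenvalue $b$, with eigenline $\langle v_2\rangle$. Therefore $\theta(u_1,u_1)=yv_2$ when $b=a^2$, and $\theta=0$ otherwise. In particular $x=0$ always. This makes the constraint $\lambda bx=0$ from Step 2 automatic, which is precisely why the list is \emph{independent of $\lambda$}. For $a=1$ the condition $b=a^2$ becomes $b=1$.

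\textbf{Step 4 (remaining conditions).} Condition \eqref{def:cocycle1} reduces to $3a\,\rho(u_1)\theta(u_1,u_1)=0$, and this holds because $\rho(u_1)v_2=0$. For condition \eqref{def:rep3}, both sides vanish on every pair of basis vectors, since $\rho(u_1)$ maps into $\langle v_2\rangle$ and $v_2$ brackets trivially with all of $V$. For instance, for $(v_1,v_1)$ the left-hand side is $\lambda\rho(u_1)v_2=0$ and the right-hand side is $-2y_1[bv_1+v_2,v_2]_V=0$.

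Collecting Steps 1–4 gives exactly the four rows of Table~\ref{tab:normal-forms-jordan}. The only delicate point is Step 2. There one must notice that the $\lambda$-dependent term $[\theta,\beta v]_V$ disappears only after Step 3 has forced $x=0$. So the eigenvector argument should be invoked before concluding anything from the $v_1$-component of \eqref{def:rep2}.
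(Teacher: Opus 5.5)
Your proof is correct and follows essentially the same route as the paper. Condition \eqref{def:rep1} kills $\rho$ for $a\neq1$ and gives $z_1=0$, $t_1=x_1$ for $a=1$; then \eqref{def:rep2} forces $x_1=0$, and Remark~\ref{rem:kcochain-eigenvalue} places $\theta(u_1,u_1)$ in $\langle v_2\rangle$ exactly when $b=a^2$. The differences are minor. You read $x_1^2=0$ off the $v_2$-equation rather than the $v_1$-component, and you also verify \eqref{def:rep3} and make the $\lambda$-independence explicit via $x=0$, both of which the paper leaves implicit.
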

\begin{proof}
	Condition~\eqref{def:rep1} forces $x_1=y_1=z_1=t_1=0$ when $a\neq1$, and 
	$z_1=0$, $t_1=x_1$ when $a=1$. For $a=1$, condition~\eqref{def:rep2} 
	(comparing $v_1$-components) then gives $x_1^2=0$, hence $x_1=t_1=0$. 
	In all cases $\rho(u_1)=\begin{pmatrix}0&0\\ y_1&0\end{pmatrix}$, with 
	$y_1=0$ unless $a=1$. By Remark~\ref{rem:kcochain-eigenvalue}, 
	$\theta(u_1,u_1)=yv_2$ with $y=0$ unless $b=a^2$. 
	Condition~\eqref{def:cocycle1} is automatic. This yields exactly the 
	forms stated in Table~\ref{tab:normal-forms-jordan}.
\end{proof}
	%%%%%%%%%%%%%%%%%%%%%%%%%%%%%%%%%%%%%%	
	%%%%%%%%%%%%%%%%%%%%%%%%%%%%%%%%%%%%%%%%%%%%%%%%%%%%%%%%%%%%%%%%%%%%%%%%%%%%%%%%
	\begin{lemma}\label{prop:cocycle-J1-beta-zero}
Let $J = J_1^0(a)$, with basis $\{u_1\}$, and let $(V, [\cdot,\cdot]_V, \beta)$, where $\beta = \begin{pmatrix} 0 & 0 \\ d & 0 \end{pmatrix}$, be a two-dimensional zero-twist HJJ algebra with basis $\{v_1, v_2\}$. Then every $2$-cocycle $(\rho, \theta)$ of $J$ with values in $V$ satisfies $\theta = 0$, and:
\begin{enumerate}[label=(\roman*)]
    \item If $[V,V]_V \subseteq \langle v_2\rangle$, there exists a unique nontrivial cocycle, given by
    $\rho(u_1)=\begin{pmatrix}0&0\\ 1&0\end{pmatrix}$.
    \item If $v_1 \in [V,V]_V$, every $2$-cocycle is trivial ($\rho=0$).
\end{enumerate}
\end{lemma}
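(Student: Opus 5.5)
The plan is to reduce everything to the four matrix entries of $\rho(u_1)=\begin{pmatrix}x_1&z_1\\ y_1&t_1\end{pmatrix}$ and the vector $\theta(u_1,u_1)=xv_1+yv_2$, and then impose the conditions \eqref{def:2cohain}--\eqref{def:rep3} one at a time, as in the proofs of Theorems~\ref{thm:normal-form-spec} and~\ref{thm:normal-form-jordan}. I will assume $d\neq 0$, so that $\beta$ is a nonzero nilpotent map with $\beta(v_1)=dv_2$ and $\beta(v_2)=0$. The degenerate case $d=0$ should be checked separately; there \eqref{def:rep1} gives nothing.

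\textbf{Step 1: $\theta=0$.} Condition \eqref{def:2cohain} reads $\beta(\theta(u_1,u_1))=a^2\,\theta(u_1,u_1)$. By Remark~\ref{rem:kcochain-eigenvalue}, a nonzero $\theta$ would force $a^2$ to be an eigenvalue of $\beta$. The map $\beta$ is nilpotent, so its only eigenvalue is $0$, while $a^2\neq 0$. Hence $\theta=0$.

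\textbf{Step 2: shape of $\rho$.} Condition \eqref{def:rep1} becomes $a\,\rho(u_1)\beta=\beta\rho(u_1)$. Comparing entries gives $dz_1=0$ and $a d t_1=d x_1$, so $z_1=0$ and $x_1=at_1$. Since $[u_1,u_1]=0$ and $\theta=0$, condition \eqref{def:rep2} reduces to $2a\,\rho(u_1)^2=0$, i.e. $\rho(u_1)^2=0$. With $z_1=0$ this gives $x_1^2=t_1^2=0$. So $\rho(u_1)=\begin{pmatrix}0&0\\ y_1&0\end{pmatrix}$; equivalently, $\rho(u_1)$ kills $v_2$ and sends $v_1$ to $y_1v_2$. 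Condition \eqref{def:cocycle1} is then automatic because $\theta=0$.

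\textbf{Step 3: condition \eqref{def:rep3}, the main obstacle.} This step splits into cases (i) and (ii), and it needs the structure of $V$. First I will use the multiplicativity $\beta[u,v]_V=[\beta u,\beta v]_V$ and the Hom-Jacobi identity of $V$. Since $\beta(V)=\langle v_2\rangle$, these should force $[v_2,v_2]_V=0$ and restrict $[v_1,v_2]_V$. Note that $\beta[v_1,v_1]_V=d^2[v_2,v_2]_V$, and $\beta$ kills $\langle v_2\rangle$. I expect to compare with the normal forms $J_2^{1n},J_2^{2n}$ of Theorem~\ref{thm:2dim-nonregular-unified}, after swapping basis vectors.

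Next, both $\beta$ and $\rho(u_1)$ take values in $\langle v_2\rangle$. So the right-hand side $-[\beta u,\rho(u_1)v]_V-[\beta v,\rho(u_1)u]_V$ is a combination of $dy_1[v_2,v_2]_V$, which vanishes. The left-hand side is $a\,\rho(u_1)[u,v]_V=a y_1\,\pi_1([u,v]_V)\,v_2$, where $\pi_1$ denotes the $v_1$-coordinate.
\begin{itemize}
\item[(i)] If $[V,V]_V\subseteq\langle v_2\rangle$, then $\pi_1([u,v]_V)=0$ for all $u,v$. So \eqref{def:rep3} holds for every $y_1$.
\item[(ii)] If $v_1\in[V,V]_V$, pick $u,v$ with $\pi_1([u,v]_V)\neq 0$. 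Then $ay_1=0$, so $y_1=0$ and $\rho=0$.
\end{itemize}

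\textbf{Step 4: normalization.} In case (i) it remains to show that all cocycles with $y_1\neq0$ are identified and distinct from $(0,0)$. A $1$-cochain $h$ must satisfy $\beta h(u_1)=a\,h(u_1)$, which forces $h=0$. So equivalence in Definition~\ref{def:equivalent-cocycles} is trivial, and in particular $y_1\neq0$ is never equivalent to $\rho=0$. Uniqueness must therefore mean normalizing $y_1=1$. I plan to do this by rescaling $u_1\mapsto u_1/y_1$, which preserves $J_1^0(a)$, as in the rescaling used in Proposition~\ref{prop:1dim-reps-J1}. I expect the delicate points to be the structural constraints on $[\cdot,\cdot]_V$ in Step 3 and this interpretation of uniqueness.
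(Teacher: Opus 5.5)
\textbf{There is a genuine gap: the case $d=0$.} For $d\neq 0$ your argument is sound, and in places more careful than the paper's:
\begin{itemize}
\item you pin down $\rho(u_1)$ through \eqref{def:rep1} rather than a change of basis;
\item you correctly see that the right-hand side of \eqref{def:rep3} is a multiple of $[v_2,v_2]_V$, which does vanish: multiplicativity puts it in $\ker\beta=\langle v_2\rangle$, and the Hom-Jacobi identity at $(v_1,v_2,v_2)$ then kills it;
\item you note that $\beta h(u_1)=a\,h(u_1)$ forces $h=0$, so ``unique'' can only mean up to rescaling.
\end{itemize}
The problem is the case you set aside. It is not a degenerate side case. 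It is exactly the zero-twist situation named in the hypothesis, and it is the case the paper's proof actually treats (``Since $\beta=0$\ldots''). It is also the only case in which (ii) has content. Your own Step~3 computation shows that for $d\neq 0$ multiplicativity forces $[V,V]_V\subseteq\langle v_2\rangle$, so the hypothesis of (ii) is never met there.

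For $d=0$ your Step~2 breaks down. Condition \eqref{def:rep1} is vacuous, so nothing yields $z_1=0$ or $x_1=at_1$. All you have is $\rho(u_1)^2=0$ from \eqref{def:rep2}. Meanwhile \eqref{def:rep3}, whose right-hand side now vanishes, reduces to $\rho(u_1)\bigl([V,V]_V\bigr)=0$.

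Carrying this case out changes the picture.
\begin{itemize}
\item If $[V,V]_V=\langle v_2\rangle$, the two constraints do force $\rho(u_1)v_1=y_1v_2$ and $\rho(u_1)v_2=0$.
\item If $V$ is abelian, every square-zero $\rho(u_1)$ is a cocycle. Since $h=0$ is forced, reaching the stated normal form needs an automorphism of $V$ (the paper's ``suitable choice of basis of $V$''), not a rescaling of $u_1$.
\item For (ii), the statement cannot be proved as written. Suppose $[V,V]_V$ is exactly the line $\langle v_1\rangle$, as for $J_2^{2z}$, $J_2^{8z}$, or $J_2^{3z}(\mu)$ with $\mu=0$. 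Then $\rho(u_1)v_1=0$, $\rho(u_1)v_2=v_1$, $\theta=0$ squares to zero and kills $[V,V]_V$, hence satisfies \eqref{def:2cohain}--\eqref{def:rep3}. It is not equivalent to $(0,0)$, because $h=0$.
\end{itemize}

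So for $\beta=0$ the correct dichotomy is different from the one in the statement:
\begin{itemize}
\item if $[V,V]_V$ spans $V$, then $\rho=0$;
\item if $[V,V]_V$ lies in a line $\ell$, nontrivial cocycles exist, with $\ker\rho(u_1)=\operatorname{Im}\rho(u_1)=\ell$ whenever $[V,V]_V\neq 0$.
\end{itemize}
The condition ``$v_1\in[V,V]_V$'' in a fixed basis is not the right invariant. The paper's proof sidesteps this only because its basis of $V$ is chosen after $\rho(u_1)$, and adapted to it.
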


	\begin{proof}
		Since $\beta = 0$ and $a \neq 0$, neither $a$ nor $a^2$ is an eigenvalue of $\beta$. By Remark~\ref{rem:kcochain-eigenvalue}, $\theta(u_1,u_1) = 0$. 
		 Condition~\eqref{def:rep2} yields $\rho(u_1)^2=0$, so, up to a suitable choice of basis of $V$, $\rho(u_1) = \begin{pmatrix} 0 & 0 \\ y_1 & 0 \end{pmatrix}$ with $y_1 \in \{0,1\}$.
 Condition~\eqref{def:rep3} is automatic if $[V,V]_V \subseteq \langle v_2 \rangle$, yielding the unique nontrivial cocycle for $y_1=1$. If $v_1 \in [V,V]_V$, it forces $y_1=0$, so the cocycle is trivial.
	\end{proof}	
	%%%%%%%%%%%%%%%%%%%%%%%%%%%%%%%%%%%%%%%%%%%
	%%%%%%%%%%%%%%%%%%%%%%%%%%%%%%%%%%%%	
	\begin{theorem}\label{thm:cocycles-J1-Jz2-unified}
		Let $\{u_1\}$ be a basis of $J\cong J_1^0(a)$, and let $\{v_1,v_2\}$ be a basis of $V$, where $(V,[\cdot,\cdot]_V,\beta)$ where beta is is one of
		 $2$-dimensional zero-twist Hom-Jacobi-Jordan algebras of Theorem~\ref{thm:2dim-nonregular-unified} and the abelian zero-twist Hom-Jacobi-Jordan algebra denoted $J^{0z}$.
		
		For the HJJ algebras $V\cong J_2^{0z}$, $J_2^{1z}$,  (i.e. whenever
		$[V,V]_V\subseteq \langle v_2\rangle$), every nontrivial $2$-cocycle $(\rho,\theta)$ of $J$
		with coefficients in $V$ is equivalent to \[
		\rho(u_1) = \begin{pmatrix}
			0&0\\1&0
		\end{pmatrix}, \qquad \theta(u_1,u_1) = 0.
		\]

		For the HJJ algebras $V\cong J_2^{2z}$, $J_2^{3z}(\mu)$ 
		, $J_2^{4z}(\lambda)$,$J_2^{5z}$, $J_2^{6z}$, $J_2^{7z}$
		, or $J_2^{8z}$ (i.e. whenever $v_1\in[V,V]_V$), every $2$-cocycle
		$(\rho,\theta)$ of $J$ with coefficients in $V$ is equivalent to the trivial cocycle, that is,
		$\rho=0$ and $\theta=0$.
	\end{theorem}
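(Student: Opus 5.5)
The plan is to work directly with the four cocycle conditions \eqref{def:2cohain}--\eqref{def:rep3}, with $\alpha(u_1)=au_1$ ($a\neq0$) and $\beta=0$ on $V$, since every zero-twist algebra in Theorem~\ref{thm:2dim-nonregular-unified}, together with $J^{0z}$, has $\beta=0$. The analysis splits into three steps.

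\textbf{Step 1: reduce to a square-zero operator.}
\begin{itemize}
\item Since $\beta=0$, condition \eqref{def:2cohain} gives $0=\beta\theta(u_1,u_1)=a^2\theta(u_1,u_1)$. Hence $\theta=0$, exactly as in Lemma~\ref{prop:cocycle-J1-beta-zero}.
\item Condition \eqref{def:rep1} reads $0=0$, because both sides contain a factor $\beta$.
\item Condition \eqref{def:cocycle1} is automatic once $\theta=0$.
\item Condition \eqref{def:rep2} with $x=y=u_1$ has left side $\rho(0)\beta(v)=0$ and a vanishing $\theta$-term. It reduces to $-2a\rho(u_1)^2=0$, so $A:=\rho(u_1)$ satisfies $A^2=0$.
\end{itemize}
If $A\neq0$, it has rank one with $\operatorname{Im}A=\ker A$. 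The only remaining constraint is then \eqref{def:rep3}, which becomes the $\beta$-free condition
\[
a\,A[u,v]_V=0 \quad\text{for all } u,v\in V,
\]
since its right-hand side contains $\beta$. So the condition is $A([V,V]_V)=0$, i.e.\ $[V,V]_V\subseteq\ker A$.

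\textbf{Step 2: the case $[V,V]_V\subseteq\langle v_2\rangle$.} This covers $J_2^{0z}$ and $J_2^{1z}$.
\begin{itemize}
\item Admissible nonzero $A$ are the rank-one square-zero maps whose kernel contains $[V,V]_V$.
\item For $J_2^{0z}$, every square-zero $A$ is admissible. For $J_2^{1z}$, we need $\ker A=\langle v_2\rangle$, so $A=\begin{pmatrix}0&0\\ y_1&0\end{pmatrix}$ with $y_1\neq0$.
\item To reach the stated normal form, use the equivalence of Definition~\ref{def:equivalent-cocycles} via \eqref{eq:equivalent-cocycle1}: $\rho'(u_1)=A-\operatorname{ad}_{h(u_1)}$, subject to $h\in C^1_{\alpha,\beta}$, i.e.\ $\beta h=h\alpha$. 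This forces $a\,h(u_1)=0$, hence $h=0$.
\item Since $h=0$, cocycle equivalence cannot rescale $y_1$. The normalization $y_1=1$ must instead come from an automorphism of $V$ commuting with $\beta=0$ that preserves the bracket. For $J_2^{1z}$, rescale $v_1\mapsto sv_1$, $v_2\mapsto s^2v_2$. For $J_2^{0z}$, any change of basis works.
\item This is the same "up to a suitable choice of basis of $V$" already used in Lemma~\ref{prop:cocycle-J1-beta-zero}(i), so I will read "equivalent" in that sense.
\end{itemize}

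\textbf{Step 3: the case $v_1\in[V,V]_V$.} This covers $J_2^{2z}$ through $J_2^{8z}$.
\begin{itemize}
\item From each product table, $[V,V]_V$ is either all of $V$ or a line. If it is $V$, then $A=0$ immediately.
\item If $[V,V]_V$ is a line $L$ containing $v_1$, the condition forces $\ker A=L$. One must then additionally show that such an $A$ is excluded, or that it is equivalent to $0$.
\item Lemma~\ref{prop:cocycle-J1-beta-zero}(ii) asserts that the condition forces $A=0$, and I will invoke it here. The plan is to verify it case by case, computing $[V,V]_V$ from each table.
\end{itemize}

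\textbf{Main obstacle.} I expect the delicate point to be Step 3, together with the meaning of equivalence, since $C^1_{\alpha,\beta}=0$ leaves no coboundary freedom. For instance, $[V,V]_V=\langle v_1\rangle$ for $J_2^{2z}$ and $J_2^{8z}$, and there the map $A$ with $Av_2=v_1$, $Av_1=0$ appears to satisfy all the conditions. In that case the honest route is to rely on Lemma~\ref{prop:cocycle-J1-beta-zero}(ii) as stated, and to flag these algebras for an explicit check of \eqref{def:rep3}.
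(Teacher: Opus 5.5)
\textbf{Steps 1 and 2.} Your reduction in Step~1 is correct. With $\beta=0$, a pair $(\rho,\theta)$ is a $2$-cocycle if and only if $\theta=0$, $\rho(u_1)^2=0$ and $\rho(u_1)([V,V]_V)=0$. This is exactly the content of Lemma~\ref{prop:cocycle-J1-beta-zero}, and that lemma is all the paper's proof invokes. You also correctly observe that $C^1_{\alpha,\beta}(J,V)=0$, because $a\,h(u_1)=\beta h(u_1)=0$. Hence $\sim$ of Definition~\ref{def:equivalent-cocycles} is mere equality, and the first assertion can only hold up to automorphisms of $V$ (or a rescaling of $u_1$). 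Under that reading Step~2 is fine.

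\textbf{The gap: Step 3.} You propose to cite Lemma~\ref{prop:cocycle-J1-beta-zero}(ii), but the counterexample you flagged is genuine, so this step cannot be repaired. For $V=J_2^{2z}$, $J_2^{8z}$, and also $J_2^{3z}(0)$, one has $[V,V]_V=\langle v_1\rangle$. The pair $\rho(u_1)v_1=0$, $\rho(u_1)v_2=v_1$, $\theta=0$ satisfies all three conditions above. You can also check this directly in $M=J\oplus V$: $\operatorname{Im}\alpha_M=\langle u_1\rangle$, and every bracket lies in $\langle v_1\rangle$, which is killed both by $\alpha_M$ and by $[u_1,\cdot]_M$. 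So multiplicativity and the Hom-Jacobi identity hold trivially. This cocycle is not equivalent to $(0,0)$, since $C^1_{\alpha,\beta}(J,V)=0$. It is not even trivial up to isomorphism of the middle algebra. Here $[\operatorname{Im}\alpha_M,M]_M=\langle v_1\rangle$, whereas this space is $0$ for the direct sum, and $\operatorname{Im}\alpha_M$ is an isomorphism invariant by Lemma~\ref{lem:fitting-invariant}. So the second assertion is false for these $V$. It holds exactly when $[V,V]_V=V$, namely for $J_2^{3z}(\mu)$ with $\mu\neq0$, $J_2^{4z}(\lambda)$, $J_2^{5z}$, $J_2^{6z}$ and $J_2^{7z}$.

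\textbf{The paper has the same defect.} The lemma's proof first puts $\rho(u_1)$ in the form $\bigl(\begin{smallmatrix}0&0\\ y_1&0\end{smallmatrix}\bigr)$ by an arbitrary change of basis of $V$, and only afterwards uses $v_1\in[V,V]_V$. But any admissible change of basis must be an automorphism of $V$, and automorphisms preserve $[V,V]_V$. When $[V,V]_V=\langle v_1\rangle$, a nonzero admissible $\rho(u_1)$ must have kernel $\langle v_1\rangle$, not $\langle v_2\rangle$. The correct outcome of your Step~3 is therefore a corrected statement, not an appeal to the lemma.
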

\begin{proof}
	This follows directly from Lemma~\ref{prop:cocycle-J1-beta-zero}, by checking in each
	case the position of $[V,V]_V$: it equals $0$ for $J_2^{0z}$ and $\langle v_2\rangle$ for
	$J_2^{1z}$, while it contains $v_1$ for $J_2^{2z}$, $J_2^{3z}(\mu)$,
	and $J_2^{4z}(\lambda)$, and also for $J_2^{5z}$ (since $[V,V]_V = V$ in this case).
\end{proof}	

	%%%%%%%%%%%%%%%%%%%%%%%%%%%%%%%%%%%%%%%%%%%%	
	%As a direct consequence of Theorem~\ref{thm:cocycles-J1-Jz2-unified}, we obtain the following result:
	%%%%%%%%%%%%%%%%%%%%%%%%%%%%%%%%%%%%%%%%%%%%%%
	
	\begin{theorem}\label{thm:cocycle-J1-J2-nilpotent}
		Let $\{u_1\}$ be a basis of $J\cong J_1^0(a)$, $a\neq0$, with $[u_1,u_1]=0$ and $\alpha(u_1)=au_1$. Let $\{v_1,v_2\}$ be a basis of $V$, where $V$ is one of the $2$-dimensional Hom-Jacobi-Jordan algebras $J_2^{0n}$, $J_2^{1n}$, or $J_2^{2n}$. 
		
		Then every nontrivial $2$-cocycle $(\rho,\theta)$ of $J$ with coefficients in $V$ is equivalent to
		\[
		\rho(u_1) = \begin{pmatrix} 0 & 0 \\ 1 & 0 \end{pmatrix}, \qquad \theta(u_1,u_1)=0,
		\]
		i.e., $\rho(u_1)v_1=v_2$ and $\rho(u_1)v_2=0$.
	\end{theorem}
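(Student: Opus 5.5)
The plan is to compute all $2$-cocycles directly from the defining conditions \eqref{def:2cohain}--\eqref{def:rep3} and then normalize the single surviving parameter. In the basis $\{v_1,v_2\}$ the twisting map of each of $J_2^{0n}$, $J_2^{1n}$, $J_2^{2n}$ is $\beta(v_1)=v_2$, $\beta(v_2)=0$. The brackets are:
\begin{itemize}
\item zero for $J_2^{0n}$;
\item $[v_1,v_1]_V=v_2$ for $J_2^{1n}$;
\item $[v_1,v_2]_V=v_2$ for $J_2^{2n}$.
\end{itemize}
As in Theorem~\ref{thm:normal-form-spec}, write $\rho(u_1)=\begin{pmatrix}x_1&z_1\\ y_1&t_1\end{pmatrix}$ and $\theta(u_1,u_1)=xv_1+yv_2$.

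\textbf{Step 1: $\theta=0$.} Since $\beta$ is nilpotent, its only eigenvalue is $0$. Because $a\neq0$, $a^2$ is not an eigenvalue of $\beta$, so Remark~\ref{rem:kcochain-eigenvalue} gives $\theta(u_1,u_1)=0$.

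\textbf{Step 2: shape of $\rho$.} Condition \eqref{def:rep1} reads $a\,\rho(u_1)\beta=\beta\rho(u_1)$.
\begin{itemize}
\item Evaluating at $v_2$ gives $0=z_1v_2$, so $z_1=0$.
\item Evaluating at $v_1$ gives $a(z_1v_1+t_1v_2)=x_1v_2$, so $x_1=at_1$.
\end{itemize}
Next, $[u_1,u_1]=0$ and $\theta=0$, so condition \eqref{def:rep2} reduces to $-2a\,\rho(u_1)^2=0$, i.e.\ $\rho(u_1)^2=0$. The diagonal entries of this square are $a^2t_1^2$ and $t_1^2$, which forces $t_1=0$ and hence $x_1=0$. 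Therefore $\rho(u_1)=\begin{pmatrix}0&0\\ y_1&0\end{pmatrix}$, that is, $\rho(u_1)v_1=y_1v_2$ and $\rho(u_1)v_2=0$.

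\textbf{Step 3: the remaining conditions hold for every $y_1$.} Condition \eqref{def:cocycle1} is automatic because $\theta=0$ and $[u_1,u_1]=0$. For \eqref{def:rep3}, note that $\rho(u_1)$ kills $v_2$, and $\beta u$ and $\rho(u_1)v$ always lie in $\langle v_2\rangle$. The only brackets that could appear are therefore $[v_2,v_2]_V$, which vanishes in all three algebras. On the left-hand side, $[u,v]_V\in\langle v_2\rangle$ in all three cases, so $\rho(u_1)[u,v]_V=0$. Hence \eqref{def:rep3} holds with no restriction on $y_1$, and the cocycles are exactly $(\rho_{y_1},0)$ with $y_1\in\mathbb{C}$.

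\textbf{Step 4: normalizing $y_1$ (main obstacle).} This is the delicate point. A $1$-cochain $h\in C^1_{\alpha,\beta}(J,V)$ must satisfy $a\,h(u_1)=\beta h(u_1)$, so $h=0$ because $a$ is not an eigenvalue of $\beta$. Consequently the equivalence of Definition~\ref{def:equivalent-cocycles} is just equality, and it cannot move $y_1$. The normalization must instead come from the change of basis of $J$ used implicitly throughout this section, as in Proposition~\ref{prop:1dim-reps-J1}. The map $u_1\mapsto y_1^{-1}u_1$ is an automorphism of $J_1^0(a)$: it commutes with $\alpha$ and preserves the zero bracket. It sends $\rho$ to $y_1^{-1}\rho$ and leaves $\theta=0$ unchanged. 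So every nontrivial cocycle, i.e.\ one with $y_1\neq0$, takes the stated normal form $\rho(u_1)v_1=v_2$, $\rho(u_1)v_2=0$, $\theta=0$. In the write-up I will state explicitly that "equivalent" here means up to this rescaling of $u_1$, since the cochain equivalence alone is trivial in this situation.
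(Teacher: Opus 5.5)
Your proof is correct, and its skeleton matches the paper's. The paper simply applies Lemma~\ref{prop:cocycle-J1-beta-zero}(i) with $d=1$; that lemma's proof gets $\theta=0$ from Remark~\ref{rem:kcochain-eigenvalue}, gets $\rho(u_1)^2=0$ from \eqref{def:rep2}, and notes that \eqref{def:rep3} is automatic when $[V,V]_V\subseteq\langle v_2\rangle$.

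Where you depart from the paper, your version is the sounder one. The lemma never uses \eqref{def:rep1}. Instead it brings a square-zero $\rho(u_1)$ to the form $\bigl(\begin{smallmatrix}0&0\\ y_1&0\end{smallmatrix}\bigr)$ with $y_1\in\{0,1\}$ ``up to a suitable choice of basis of $V$''. However, a change of basis that preserves $\beta$ and $[\cdot,\cdot]_V$ is an automorphism of the form $xI+zN$, with $N=\bigl(\begin{smallmatrix}0&0\\ 1&0\end{smallmatrix}\bigr)$. Conjugation by such a map leaves the entry $z_1$ of $\rho(u_1)$ unchanged and commutes with $y_1N$. So it can neither force the triangular shape nor rescale $y_1$.

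Your use of \eqref{def:rep1} settles the shape properly: it gives $z_1=0$ and $x_1=at_1$, and then $t_1=x_1=0$ follows from $\rho(u_1)^2=0$. Rescaling $u_1$ by an automorphism of $J^0_1(a)$ is the right normalization mechanism.

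Your observation that $C^1_{\alpha,\beta}(J,V)=0$ is also correct. It makes your planned caveat necessary rather than cosmetic. Under Definition~\ref{def:equivalent-cocycles}, the cocycles $(y_1N,0)$ with $y_1\in\mathbb{C}^*$ are pairwise inequivalent. So the statement holds only up to this rescaling of $u_1$, that is, up to isomorphism of the algebras $J\oplus V$ rather than equivalence of extensions.
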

	\begin{proof}
For $J_2^{0n}$, $J_2^{1n}$, and $J_2^{2n}$, the twisting map is
$\beta=\begin{pmatrix}0&0\\1&0\end{pmatrix}$ and
$[V,V]_V\subseteq\langle v_2\rangle$ (Table~\ref{tab:hjj-2d-nonregular}).
Lemma~\ref{prop:cocycle-J1-beta-zero}(i) applies with $d=1$,
yielding the unique nontrivial cocycle
$\rho(u_1)=\begin{pmatrix}0&0\\1&0\end{pmatrix}$, $\theta=0$.
\end{proof}
	%%%%%%%%%%%%%%%%%%%%%%%%%%%%%%%%%%%%
	%%%%%%%%%%%%%%%%%%%%%%%%%%%%%%%%%%%%%%%%%%%%%%%%%%%%%%%%%%%%%%%%%%
	We now turn to the regular case.
	\begin{theorem}\label{thm:2dim-regular}
		Every $2$-dimensional regular Hom-Jacobi-Jordan algebra is isomorphic to exactly one of the 
		following pairwise non-isomorphic algebras:
	\begin{table}[h] \centering \renewcommand{\arraystretch}{1.6} \begin{tabular}{|c|c|l|} \hline	
		
				\textbf{Name} & \textbf{Twisting map} & \textbf{Nonzero products} \\
				\hline
				$J_{1,1}^{0}(a,b)$ & $\alpha(e_1) = a e_1,\ \alpha(e_2) = b e_2$ & None (Abelian) \\
				\hline
				$J_{1,1}^{1}(a)$ & $\alpha(e_1) = a e_1,\ \alpha(e_2) = a^2 e_2$ & $[e_1,e_1] = e_2$ \\
				\hline
				$J_{2}^{0}(a)$   & $\alpha(e_1) = ae_1+ e_2,\ \alpha(e_2) = ae_2 $ & None (Abelian) \\
				\hline
				$J_{2}^{1}$   & $\alpha(e_1) = e_1+e_2,\ \alpha(e_2) =  e_2$ & $[e_1,e_1] = e_2$ \\
				\hline

\end{tabular} \caption{2-dimensional regular Hom-Jacobi-Jordan algebras $(a,b\in\mathbb{C}^{\times})$.} \end{table}			
	\end{theorem}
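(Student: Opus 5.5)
By Theorem~\ref{thm:regular-nilpotent}, every finite-dimensional regular Hom-Jacobi-Jordan algebra is nilpotent, so I would first use this to pin down the possible brackets. Let $(J,[\cdot,\cdot],\alpha)$ be $2$-dimensional and regular. The lower central series $C^0(J)\supseteq C^1(J)\supseteq\cdots$ must reach $\{0\}$. Each $C^{k+1}(J)=[J,C^k(J)]$ is determined by $C^k(J)$, so once two consecutive terms coincide the series is constant from then on. Hence the series strictly decreases until it hits $\{0\}$. In dimension $2$ this gives $\dim C^1(J)\le 1$ and $C^2(J)=[J,[J,J]]=\{0\}$. This splits the argument into the abelian case and the case $\dim[J,J]=1$ with $[J,J]$ central.

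\emph{Abelian case.} Here the only datum is the invertible map $\alpha$, and isomorphism is conjugacy of $\alpha$. The Jordan normal form over $\mathbb{C}$ gives either $\alpha=\mathrm{diag}(a,b)$ with $a,b\neq0$, which is $J^0_{1,1}(a,b)$, or a single Jordan block. Writing the Jordan block in the basis convention of the table gives $J^0_2(a)$. I would note that $J^0_{1,1}(a,b)\cong J^0_{1,1}(b,a)$, so the parameters are to be read up to this swap.

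\emph{Non-abelian case.} Set $[J,J]=\langle e_2\rangle$, so $[e_2,J]=0$. Pick $e_1\notin\langle e_2\rangle$. Then $[e_1,e_1]\ne0$, since otherwise the bracket vanishes identically; rescaling $e_1$ by a square root gives $[e_1,e_1]=e_2$. Multiplicativity gives $\alpha([J,J])\subseteq[J,J]$, so $\alpha(e_2)=c\,e_2$. Write $\alpha(e_1)=ae_1+te_2$ with $a\neq0$. Then
\[
c\,e_2=\alpha([e_1,e_1])=[\alpha(e_1),\alpha(e_1)]=a^2e_2,
\]
so $c=a^2$, and the Hom-Jacobi identity holds automatically because all double brackets vanish.
\begin{itemize}
\item If $a\neq1$, then $a\neq a^2$, and replacing $e_1$ by $e_1+se_2$ with suitable $s$ kills $t$. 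This substitution leaves $[e_1,e_1]$ unchanged, and gives $J^1_{1,1}(a)$.
\item If $a=1$ and $t=0$, we again get $J^1_{1,1}(1)$.
\item If $a=1$ and $t\neq0$, the substitution $e_1\mapsto\lambda e_1$, $e_2\mapsto\lambda^2e_2$ preserves $[e_1,e_1]=e_2$ and changes $t$ to $t/\lambda$. Taking $\lambda=t$ gives $J^1_2$.
\end{itemize}

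\emph{Pairwise non-isomorphism.} I would separate the families by three invariants: $\dim[J,J]$, whether $\alpha$ is diagonalizable, and the spectrum of $\alpha$. In the non-abelian families, the eigenvalue of $\alpha$ on $[J,J]$ is the square of its other eigenvalue, which recovers the parameter $a$.

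The step I expect to need the most care is the non-abelian normalization. One must check that the changes of basis used keep both the bracket form and the shape of $\alpha$. One must also see that the case $a=1$, $t\neq0$ genuinely cannot be diagonalized, which is clear because $\alpha$ is then a nontrivial Jordan block and hence is not diagonalizable.
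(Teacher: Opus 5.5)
Your proof is correct, but it is organized differently from the paper's.

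\textbf{The paper's route.} It splits according to the Jordan form of $\alpha$.
\begin{itemize}
\item In the diagonalizable case, it writes $M=J^0_1(a)\oplus V$ with $V$ an eigenline. It then reads off the bracket from the one-dimensional cocycle computation of Proposition~\ref{prop:1dim-reps-J1}: a nontrivial cocycle exists only when $b=a^2$, and then $\theta(u_1,u_1)=v_1$.
\item In the Jordan-block case, it argues that a nonzero bracket forces $a=1$. It then obtains $J^1_2$ by twisting the classical algebra $[e_1,e_1]'=e_2$ via Proposition~\ref{prop:Jordan}.
\end{itemize}

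\textbf{Your route.} You split according to $\dim[J,J]$. Theorem~\ref{thm:regular-nilpotent} gives $\dim[J,J]\le 1$ and $[J,[J,J]]=0$. You then normalize $\alpha$ directly in a basis with $[e_1,e_1]=e_2$.

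\textbf{What your route buys.}
\begin{itemize}
\item It is self-contained. Nilpotency immediately makes $[J,J]$ a central, $\alpha$-invariant line. The paper uses this only tacitly when it treats an eigenline as an ideal so that the split-extension cocycle description applies.
\item It shows transparently why $a=1$ in the non-diagonalizable case: $\alpha$ acts by $a^2$ on $[J,J]$ and by $a$ on $J/[J,J]$. No twisting argument is needed.
\item You give explicit invariants for pairwise non-isomorphism, which the paper does not spell out.
\item Your remark that $J^0_{1,1}(a,b)\cong J^0_{1,1}(b,a)$ is correct. The list is irredundant only up to this swap, a point the statement's ``pairwise non-isomorphic'' glosses over.
\end{itemize}

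\textbf{What the paper's route buys.} It mainly illustrates the extension/cohomology machinery and the twisting principle, which it then reuses in the three-dimensional classification.
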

	
	\begin{proof}
		Let $(M, [\cdot,\cdot]_M, \alpha_M)$ be a $2$-dimensional regular Hom-Jacobi-Jordan algebra. 
		Since $\alpha_M$ is invertible, we consider its two possible Jordan canonical forms over $\mathbb{C}$:
		\[
		\begin{pmatrix} a & 0 \\ 0 & b \end{pmatrix}
		\quad \text{and} \quad
		\begin{pmatrix} a & 0 \\ 1 & a \end{pmatrix},
		\qquad \text{with } a, b \in \mathbb{C}^\times.
		\]
		
		\medskip
		\noindent\textbf{Case 1:} $\alpha_M$ is diagonalizable.
		
%		Let $J = \langle e_1\rangle$ and $V = \langle e_2\rangle$ be $\alpha_M$-invariant subspaces,
		with $\alpha(e_1) = a e_1$ and $\beta(e_2) = b e_2$.

	 	By the decomposition $M=J\oplus V$ and
	 Proposition~\ref{prop:1dim-reps-J1}, the bracket relations reduce to $[e_1, e_1]_M = e_2$ with
		$b = a^2$, and $[e_1, e_2]_M = 0$. The nontrivial cocycle yields the non-abelian algebra $J_{1,1}^1(a)$, 
		whereas the trivial cocycle yields the abelian algebra $J_{1,1}^0(a,b)$.

		\medskip
		\noindent\textbf{Case 2:} $\alpha_M$ has a single Jordan block.
		
		Here, $\alpha(e_1) = a e_1+e_2$ and $\alpha(e_2) = a e_2$. 
		The case of a trivial bracket yields the abelian algebra $J_2^0(a)$. 
		If the bracket is nontrivial, the multiplicativity condition  forces $a = 1$, since $a \neq 0$ by regularity.
		Since a Hom-Jacobi--Jordan algebra is classical if and only if
		$
		\alpha=\mathrm{Id},
		$
		it follows that $(M,[\cdot,\cdot]_M,\alpha)$ is a classical Jacobi--Jordan algebra if and only if $M=J^0_{1,1}(1,1), $ or $M=J^1_{1,1}(1) $. 
		By Proposition~\ref{prop:Jordan}, the Hom-bracket is derived from the classical one via $[e_1, e_1] = \alpha([e_1,e_1]')$. 
		Since the underlying classical bracket satisfies $[e_1, e_1]' = e_2$, we obtain 
		$
		[e_1, e_1] = \alpha(e_2) = e_2,
		$
		which precisely defines the algebra $J_2^1$.
	\end{proof}
	%%%%%%%%%%%%%%%%%%%%%%%%%%%%%%%%%%%%%%%%%%%%%%%%%%%%%%%%%%%%%%%%%%%%%%%%%%%	
	The following results describe the second cohomology sets associated with the 
	$1$-dimensional Hom-Jacobi-Jordan algebra $J_1^0(a)$, with coefficients in the 
	$2$-dimensional algebras appearing in Theorem~\ref{thm:2dim-regular}. These computations allow us to classify 
	the corresponding $3$-dimensional split extensions.
	
	\begin{proposition}\label{prop:dim-reps-J1}
	Let $\{u_1\}$ be a basis of $J\cong J^0_1(a)$ and $\{v_1, v_2\}$ be a basis of 
	$V \cong J^0_{1,1}(b,c)$, with $\beta = \begin{pmatrix} b & 0 \\ 0 & c \end{pmatrix}$.	
		Then any nontrivial  $2$-cocycle $(\rho,\theta)$ of $J$ with coefficients in $V$ is equivalent to one of the following:
		
		{\renewcommand{\arraystretch}{1.8}
			\begin{table}[H]
				\centering
				\begin{tabular}{|c|c|c|}
					\hline
					\textbf{Name} & $\rho(u_1)$ & $\theta(u_1,u_1)$ \\
					\hline			
					$C^1_{1,2}(a, b, ab)$ & $\rho(u_1)v_1 = v_2$ & $0$ \\    
					\hline	
					$C^2_{1,2}(a, a^2, c)$ & $\rho(u_1) = 0$ & $\theta(u_1,u_1) = v_1$ \\
					\hline	
				\end{tabular}
				\caption{Nontrivial $2$-cocycles of $J_1^0(a)$ with coefficients in $J_{1,1}^0(b,c)$.}
				\label{tab:cocycles-J1-2dim}
			\end{table}
		}
	\end{proposition}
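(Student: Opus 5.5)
This is the special case $\lambda=0$ of Theorem~\ref{thm:normal-form-spec}, where $V$ is abelian, followed by a normalization step under the equivalence of Definition~\ref{def:equivalent-cocycles}. Write $\rho(u_1)=\begin{pmatrix}x_1&z_1\\ y_1&t_1\end{pmatrix}$ and $\theta(u_1,u_1)=xv_1+yv_2$.

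\textbf{Step 1: constraints on $\rho(u_1)$.} Since $[\cdot,\cdot]_V=0$, condition~\eqref{def:rep3} is vacuous. Condition~\eqref{def:rep1} gives the equations~\eqref{eq:proof-cond1}. Because $[u_1,u_1]=0$ and $a\neq 0$, condition~\eqref{def:rep2} reduces to $\rho(u_1)^2=0$. So $\rho(u_1)$ is nilpotent of rank at most one.

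\textbf{Step 2: constraints on $\theta$.} By Remark~\ref{rem:kcochain-eigenvalue}, $\theta(u_1,u_1)$ is an eigenvector of $\beta$ for the eigenvalue $a^2$, or zero. Condition~\eqref{def:cocycle1} reduces to $\rho(u_1)\theta(u_1,u_1)=0$.

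\textbf{Step 3: normalization.} In the abelian case, Remark~\ref{rem:abelian-reduction} says $\rho$ is an invariant of the class and $\theta$ is determined modulo $d^1h$. Here $d^1h(u_1,u_1)=-2\rho(u_1)h(u_1)$, with $h(u_1)$ an eigenvector of $\beta$ for the eigenvalue $a$.

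We may also rescale $v_1,v_2$ independently, since these are automorphisms of $V$ fixing $\beta$. Strictly speaking this is a change of basis of $V$ rather than an equivalence in the sense of Definition~\ref{def:equivalent-cocycles}. I will read ``equivalent'' as allowing this, as the paper implicitly does in Proposition~\ref{prop:1dim-reps-J1}.

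\textbf{Case $\rho\neq 0$.} A nonzero nilpotent $\rho(u_1)$ that commutes appropriately with the diagonal $\beta$ must, generically, be $y_1E_{21}$ with $c=ab$, or $z_1E_{12}$ with $b=ac$. The second is the first after swapping $v_1,v_2$, as noted after Theorem~\ref{thm:normal-form-spec}. Rescaling $v_2$ makes $y_1=1$.

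Then $\rho(u_1)\theta=0$ forces $x=0$, so $\theta(u_1,u_1)=yv_2$, which requires $c=a^2$, i.e.\ $b=a$. In that case take $h(u_1)=sv_1$; this is admissible because $\beta v_1=av_1$. It changes $y$ by $-2s$, so $\theta$ can be killed. The result is $C^1_{1,2}(a,b,ab)$.

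\textbf{Case $\rho=0$.} Then $\theta$ is unconstrained except for the eigenvalue condition. If $b=a^2$, rescaling $v_1$ gives $\theta(u_1,u_1)=v_1$, i.e.\ $C^2_{1,2}(a,a^2,c)$. The case $c=a^2$ is the same form after swapping $v_1,v_2$.

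\textbf{Main obstacle.} The degenerate entries with $a=1$ and $b=c$ (and with $a=-1$, $c=-b$) in Table~\ref{tab:normal-forms-lambda-0} are the hard part. There $\rho(u_1)$ is a general rank-one nilpotent matrix, and $\theta$ may have both components.

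For these I would use conjugation by $GL_2$ elements commuting with $\beta$ (all of $GL_2$ when $b=c$) to bring $\rho(u_1)$ to $E_{21}$. I would then use $h$ to kill the $v_2$-part of $\theta$, which lies in the image of $\rho(u_1)$. For $\theta(u_1,u_1)=xv_1$ with $\rho(u_1)=E_{21}$, the condition $\rho\theta=0$ forces $x=0$.

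When $\rho=0$ and $b=c=a^2$, a basis change sends any nonzero $\theta(u_1,u_1)$ to $v_1$. This recovers exactly the two listed classes, though it again needs the broader notion of equivalence that allows automorphisms of $V$.
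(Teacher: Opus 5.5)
Your proposal is correct and follows the paper's route: specialize to $\lambda=0$, reduce to $c=ab$ by swapping $v_1,v_2$, rescale, and kill $\theta$ with a $1$-cochain. It is in fact more careful than the paper's proof in three places: your $h(u_1)=sv_1$ with $s=y/(2y_1)$ is the admissible choice, whereas the paper's $h(u_1)=\frac{y}{y_1}v_2$ is not in $C^1_{\alpha,\beta}(J,V)$ when $a\neq1$ and leaves $\theta$ unchanged because $\rho(u_1)v_2=0$; you also treat $a=1$, $b=c$, which the paper's ``since $a^2\neq1$'' silently excludes; and you rightly note that rescaling is not an equivalence in the strict sense of Definition~\ref{def:equivalent-cocycles}. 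One small correction: for $a=-1$, equation~\eqref{eq:proof-cond1} already forces $x_1=t_1=0$ and $\theta(u_1,u_1)$ has at most one component, so that case is covered by your generic argument rather than by $GL_2$-conjugation (which is unavailable there, since $\beta=\mathrm{diag}(b,-b)$ is not scalar).
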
	

	\begin{proof}
		Here $\lambda=0$, so Table~\ref{tab:normal-forms-lambda-0} applies. Since $a^2\neq1$, the relevant rows are ``$c=ab$'' and ``$b=ac$'', equivalent under $(v_1,v_2)\leftrightarrow(v_2,v_1)$. Taking $c=ab$: if $y_1\neq0$, rescaling $v_2$ gives $\rho(u_1)v_1=v_2$, with $\theta(u_1,u_1)=0$ unless $b=a$, in which case $h(u_1)=\frac{y}{y_1}v_2$ kills $\theta$ by \eqref{eq:equivalent-cocycle2}; this gives $C^1_{1,2}(a,b,ab)$. If $y_1=0$ and $b=a^2$, rescaling $v_1$ gives $\theta(u_1,u_1)=v_1$, i.e.\ $C^2_{1,2}(a,a^2,c)$. Since $\rho(u_1)=0$ is coboundary-invariant, the two families are inequivalent.
	\end{proof}	
	%%%%%%%%%%%%%%%%%%%%%%%%%%%%%%%%%%%%%%%
	\begin{proposition}\label{prop:dim-reps-J1-on-J2}
	Let $\{u_1\}$ be a basis of $J\cong J_1^0(a)$ and $\{v_1,v_2\}$ be a basis of 
	$V \cong J_{1,1}^1(b)$, with $\beta = \begin{pmatrix} b & 0 \\ 0 & b^2 \end{pmatrix}$.	
		Then any nontrivial $2$-cocycle $(\rho,\theta)$ of $J$ with coefficients in $V$ is equivalent to one of the following:
		\begin{table}[ht] \centering \renewcommand{\arraystretch}{1.6} \begin{tabular}{|c|c|c|} \hline
	
				\textbf{Name} & $\rho(u_1)$ & $\theta(u_1,u_1)$ \\
				\hline
				$C^3_{1,2}(a,b,a^2)$, $b \in \{-a,a\}$ 
				& $0$ 
				& $v_2$ \\
				\hline
		\end{tabular} \caption{$2$-cocycles of $J_1^0(a)$ with coefficients in $J_{1,1}^1(b)$.} \label{tab:cocycles-J1-on-J2} \end{table}	
	\end{proposition}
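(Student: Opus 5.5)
The plan is to specialize Theorem~\ref{thm:normal-form-spec} to the case at hand and then reduce by explicit $1$-cochains. Here $V\cong J_{1,1}^1(b)$ has $[v_1,v_1]_V=v_2$ and $\beta=\mathrm{diag}(b,b^2)$. This is exactly the setting $\lambda=1$, $c=b^2$, so Table~\ref{tab:normal-forms-lambda-1} lists the admissible shapes of $(\rho,\theta)$. I first translate its rows into conditions on $a,b$ alone; throughout, $a,b\neq 0$.
\begin{itemize}
\item The row $c=ab$ becomes $b^2=ab$, i.e.\ $b=a$. Its sub-condition $b=a$ then holds automatically, giving $\rho(u_1)v_1=y_1v_2$ and $\theta(u_1,u_1)=yv_2$.
\item The row $c\neq ab$, $c=a^2$ becomes $b^2=a^2$ with $b\neq a$, i.e.\ $b=-a$. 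It gives $\rho=0$ and $\theta(u_1,u_1)=yv_2$.
\item The row $c\neq ab$, $b=a^2$ gives $\rho=0$ and $\theta(u_1,u_1)=xv_1$.
\end{itemize}

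For the third row, I would return to equation~\eqref{eq:proof-cond3} with $\lambda=1$. Since $\rho=0$, the relation $y_1(x_1+t_1)=-\frac{bx}{2a}$ forces $x=0$. So this branch yields only the trivial cocycle and contributes nothing, unless $a^2$ also equals $b^2$, which falls into the previous rows. Condition~\eqref{def:cocycle1} reduces to $\rho(u_1)\theta(u_1,u_1)=0$, and this is automatic in the remaining rows because $\rho(u_1)v_2=0$.

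Next comes the reduction via Proposition~\ref{prop:cocycle-equivalence-formulas}, which is the main step. Take a $1$-cochain $h(u_1)=sv_1+rv_2$. The cochain condition $\beta h=h\alpha$ demands $bs=as$ and $b^2r=ar$.
\begin{itemize}
\item \emph{Case $b=a$.} Here $s$ is free. Formula~\eqref{eq:equivalent-cocycle1} gives $\rho'(u_1)v_1=(y_1-s)v_2$ and $\rho'(u_1)v_2=0$. Choosing $s=y_1$ kills $\rho$. Formula~\eqref{eq:equivalent-cocycle2} then gives
\[
\theta'(u_1,u_1)=\theta(u_1,u_1)-2\rho(u_1)h(u_1)+[h(u_1),h(u_1)]_V=(y-y_1^2)v_2 .
\]
The possible extra $r$-term plays no role, since $[v_2,\cdot]_V=0$ and $\rho(u_1)v_2=0$. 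So the cocycle is equivalent to $\rho=0$, $\theta(u_1,u_1)=y'v_2$.
\item \emph{Case $b=-a$.} We already have $\rho=0$ and $\theta(u_1,u_1)=yv_2$. Any admissible $h$ is either $0$ or a multiple of $v_2$, and it does not change $\theta$.
\end{itemize}

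Finally, in both cases a nonzero $y'$ (resp.\ $y$) is normalized to $1$. I would do this by the rescaling $v_1\mapsto kv_1$, $v_2\mapsto k^2v_2$, which preserves $[v_1,v_1]_V=v_2$ and $\beta$, with $k^2=y'$. This is the same kind of basis rescaling used in Propositions~\ref{prop:1dim-reps-J1} and~\ref{prop:dim-reps-J1}. The result is $C^3_{1,2}(a,b,a^2)$ with $b\in\{a,-a\}$; note that $b^2=a^2$ in both cases, matching the eigenvalue requirement of Remark~\ref{rem:kcochain-eigenvalue}.

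The main obstacle is bookkeeping rather than anything conceptual. The quadratic term $[h(u_1),h(u_1)]_V$ must be tracked correctly, and one has to make sure that no residual invariant survives. In particular, $\theta'=0$ may happen when $y=y_1^2$, which gives the trivial class and is excluded by ``nontrivial''. One must also check that the rescaling step is permitted in the sense the paper uses for normal forms.
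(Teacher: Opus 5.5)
Your proposal is correct and follows essentially the same route as the paper. Both arguments specialize Table~\ref{tab:normal-forms-lambda-1} with $c=b^2$, kill $\rho$ in the case $b=a$ by the $1$-cochain $h(u_1)=y_1v_1$, and normalize by rescaling. Your version is more careful than the paper's sketch in two places: you explicitly discard the $\theta(u_1,u_1)=xv_1$ row via \eqref{eq:proof-cond3}, and you track the quadratic term $[h(u_1),h(u_1)]_V$ to obtain $\theta'=(y-y_1^2)v_2$, which shows exactly when the resulting class is trivial.
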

	\begin{proof}
		By Table~\ref{tab:normal-forms-lambda-1}, $\rho(u_1)=\begin{pmatrix}0&0\\ y_1&0\end{pmatrix}$, $y_1\in\mathbb{C}$, and $\theta(u_1,u_1)=yv_2\neq0$ exactly when $a^2=c$. Here $c=b^2$, so this becomes $a^2=b^2$, i.e.\ $b\in\{-a,a\}$; otherwise $\theta(u_1,u_1)=0$. Rescaling $v_1,v_2$ normalizes $y_1,y\in\{0,1\}$, and the case $y_1\neq0$ (which forces $b=a$, a coboundary via $h(u_1)=y_1v_1$) merges with the case $y_1=0$ into the single class $C^3_{1,2}(a,b,a^2)$, $b\in\{-a,a\}$.
	\end{proof}
	%%%%%%%%%%%%%%%%%%%%%%%%%%%%%%%%%%%%%%%%%%%%%%%%%%%%%%%%%%%%%%%%%%%%
	The remaining cases are treated analogously. Their resulting normal forms are stated in the following propositions, each proof reducing to a direct specialization of the relevant table established above.	
\begin{proposition}\label{prop:dim-reps-J1-on-J3}
	Let $\{u_1\}$ be a basis of $J\cong J^0_1(a)$ and $\{v_1, v_2\}$ 
	be a basis of $V \cong J^0_2(b)$, $\beta = \bigl(\begin{smallmatrix} b & 0 \\ 
		1 & b \end{smallmatrix}\bigr)$.
	Then any  nontrivial $2$-cocycle $(\rho,\theta)$ of $J$ 
	with coefficients in $V$ is equivalent to one 
	of the following:
		{\renewcommand{\arraystretch}{1.8}
			\begin{table}[H]
				\centering
				\begin{tabular}{|c|c|c|}
					\hline
					\textbf{Name} & $\rho(u_1)$ & $\theta(u_1,u_1)$ \\
					\hline
					
					$C^1_{1,2}(a, a^2)$, 
					& $\rho(u_1) = 0$
					& $\theta(u_1,u_1) = v_2,$ \\
					\hline
					
					$C^2_{1,2}(1, b)$, 
					& $\rho(u_1)v_1 = v_2,$
					& $\theta(u_1,u_1) = 0$ \\
					\hline
						$C^2_{1,2}(1, 1)$, 
					& $\rho(u_1)v_1 = v_2,$
					& $\theta(u_1,u_1) = v_2$ \\
					\hline
					
				\end{tabular}
				\caption{Nontrivial $2$-cocycles of $J_1^0(a)$ with coefficients in $J_2^0$ ($\dim V = 2$).}
				\label{tab:cocycles-J1-on-J3}
			\end{table}
		}
	\end{proposition}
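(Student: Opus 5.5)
The plan is to specialize Theorem~\ref{thm:normal-form-jordan} to $\lambda=0$. Here $V\cong J^0_2(b)$ is abelian and $\beta$ is the Jordan block $\bigl(\begin{smallmatrix} b & 0 \\ 1 & b \end{smallmatrix}\bigr)$, so Table~\ref{tab:normal-forms-jordan} already gives all cocycles:
\begin{itemize}
\item $\rho(u_1)=0$ with $\theta(u_1,u_1)=yv_2$ when $a\neq 1$ and $b=a^2$;
\item $\rho(u_1)v_1=y_1v_2$ (and $\rho(u_1)v_2=0$) with $\theta(u_1,u_1)=yv_2$, $y=0$ unless $b=1$, when $a=1$;
\item the trivial cocycle in every other case.
\end{itemize}
What remains is to reduce these to normal forms up to equivalence and normalization.

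\textbf{Step 1: the coboundaries.} Since $[\cdot,\cdot]_V=0$, Remark~\ref{rem:abelian-reduction} says equivalence keeps $\rho$ fixed and changes $\theta$ by $d^1(h)$. A $1$-cochain $h$ must satisfy $\beta(h(u_1))=h(\alpha(u_1))=a\,h(u_1)$. So $h(u_1)$ is an eigenvector of $\beta$ for the eigenvalue $a$, which forces $b=a$ and $h(u_1)=s v_2$. Then
\[
d^1(h)(u_1,u_1)=h([u_1,u_1])-2\rho(u_1)h(u_1)=-2s\,\rho(u_1)v_2=0 .
\]
Hence $B^2=0$ in every case, and inequivalent representatives stay inequivalent. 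In particular, classes with $\rho=0$ are never equivalent to classes with $\rho\neq 0$.

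\textbf{Step 2: normalization.} This follows the convention used in Propositions~\ref{prop:dim-reps-J1} and~\ref{prop:dim-reps-J1-on-J2}. I will rescale by basis changes that preserve the structure maps: $u_1\mapsto t u_1$, which is allowed since $\alpha$ is scalar, and $(v_1,v_2)\mapsto(kv_1,kv_2)$, which keeps $\beta$ in Jordan form with subdiagonal entry $1$. Under these, $\rho(u_1)$ scales by $t$ and the coefficient $y$ of $\theta(u_1,u_1)$ scales by $t^2/k$. The cases then go as follows.
\begin{itemize}
\item $a\neq1$, $b=a^2$, $y\neq0$: scaling gives $\theta(u_1,u_1)=v_2$, which is $C^1_{1,2}(a,a^2)$. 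The same form covers $a=1$, $b=1$, $y_1=0$.
\item $a=1$, $y_1\neq0$, $y=0$: choosing $t$ sets $y_1=1$, giving $C^2_{1,2}(1,b)$.
\item $a=b=1$, $y_1\neq0$, $y\neq0$: choose $t$ to make $y_1=1$, then $k$ to make $y=1$. This gives $C^2_{1,2}(1,1)$ with $\theta(u_1,u_1)=v_2$.
\end{itemize}

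The main obstacle is justifying Step 2, since Definition~\ref{def:equivalent-cocycles} only permits the maps $\Phi_h$. I will treat the rescalings as isomorphisms of the resulting split extensions, which is the convention already implicit in the earlier propositions. I will also check that these rescalings do not identify the listed forms with one another. The invariants for this are whether $\rho=0$, and, when $\rho\neq0$, whether $\theta=0$; both survive rescaling because it multiplies $\rho$ and $\theta$ by nonzero scalars.
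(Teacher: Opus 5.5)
Your proposal is correct and follows essentially the same route as the paper: specialize Table~\ref{tab:normal-forms-jordan} at $\lambda=0$, then normalize the coefficients $y_1,y$ by rescaling. Your explicit check that $B^2_{\alpha,\beta}(J,V)=0$ here (any $1$-cochain has $h(u_1)\in\langle v_2\rangle$ and $\rho(u_1)v_2=0$) is a useful addition, because it makes precise what the paper leaves implicit: the normalization relies on rescaling isomorphisms, not on the maps $\Phi_h$ of Definition~\ref{def:equivalent-cocycles}.
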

	\begin{proof}
		Specializing Table~\ref{tab:normal-forms-jordan} at $\lambda=0$, we obtain $C^1_{1,2}(a,a^2)$ with $\rho=0$ and $\theta=v_2$ if $a\neq1$, and $C^2_{1,2}(1,b)$ with $\rho(u_1)v_1=v_2$ and $\theta=0$ if $a=1$. When $a=b=1$, we have $\rho(u_1)v_1=y_1v_2$ and $\theta(u_1,u_1)=yv_2$; rescaling $(y_1,y)=(0,1)$, $(1,0)$, and $(1,1)$ yields $C^1_{1,2}(a,a^2)$ with $a=1$, $C^1_{1,2}(1,b)$ with $b=1$, and $C^1_{1,2}(1,1)$, respectively.
	\end{proof}
	%%%%%%%%%%%%%%%%%%%%%%%%
\begin{proposition}\label{prop:dim-reps-J1-on-J4}
	Let $\{u_1\}$ be a basis of $J\cong J_1^0(a)$ and $\{v_1,v_2\}$ be a basis of 
	$V \cong J_2^1$, $\beta = \begin{pmatrix} 1 & 0 \\ 1& 1 \end{pmatrix}$.
	Then any  nontrivial $2$-cocycle $(\rho, \theta)$ of $J$ 
	with coefficients in $V$ is equivalent to one 
	of the following:
		
		{\renewcommand{\arraystretch}{1.8}
			\begin{table}[H]
				\centering
				\begin{tabular}{|c|c|c|}
					\hline
					\textbf{Cocycle Class} & $\rho(u_1)$ & 
					$\theta(u_1,u_1)$ \\
					\hline
					$C^1_{1,2} (1)$
					& $\rho(u_1)v_1 = v_2,$
					& $\theta(u_1,u_1) =yv_2$,\quad $y\in \{0,1\}$  \\
					\hline
					$C^2_{1,2} (a)$, $a\in\{-1,1\}$
					& $\rho(u_1) = 0$
					& $\theta(u_1,u_1) = v_2,$ \\
					\hline
				\end{tabular}
				\caption{$2$-cocycles of $J_1^0(a)$ with 
					coefficients in $J_2^1$ ($\dim V = 2$).}
				\label{tab:cocycles-J1-on-J4}
		\end{table}}
	\end{proposition}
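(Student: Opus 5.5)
The plan is to specialize Theorem~\ref{thm:normal-form-jordan} to $b=1$ and $\lambda=1$. Here $V\cong J_2^1$ has $[v_1,v_1]_V=v_2$, all other products zero, and $\beta(v_1)=v_1+v_2$, $\beta(v_2)=v_2$. That theorem already handles conditions \eqref{def:2cohain}, \eqref{def:rep1} and \eqref{def:cocycle1}. It gives $\rho(u_1)=\begin{pmatrix}0&0\\ y_1&0\end{pmatrix}$ with $y_1=0$ unless $a=1$, and $\theta(u_1,u_1)=yv_2$ with $y=0$ unless $a^2=1$.

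First I check the conditions that involve $[\cdot,\cdot]_V$. For \eqref{def:rep2}, the left side vanishes because $[u_1,u_1]=0$. On the right side, $\rho(u_1)^2=0$, and $[\theta(u_1,u_1),\beta(v)]_V=y[v_2,\beta(v)]_V=0$ because $v_2$ annihilates $V$. For \eqref{def:rep3}, the left side is $a\rho(u_1)[v_i,v_j]_V$, which lies in $\rho(u_1)\langle v_2\rangle=0$. The right side is a sum of brackets with $\rho(u_1)v\in\langle v_2\rangle$, hence also $0$. So every pair from Table~\ref{tab:normal-forms-jordan} with $b=1$ survives. This leaves three cases:
\begin{itemize}
\item $a\notin\{\pm1\}$: only the trivial cocycle;
\item $a=-1$: $\rho=0$ and $\theta(u_1,u_1)=yv_2$;
\item $a=1$: $\rho(u_1)v_1=y_1v_2$ and $\theta(u_1,u_1)=yv_2$.
\end{itemize}

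Next I compute the equivalences of Proposition~\ref{prop:cocycle-equivalence-formulas}. A $1$-cochain $h$ must satisfy $\beta h(u_1)=ah(u_1)$. This forces $h(u_1)\in\langle v_2\rangle$ when $a=1$, and $h=0$ otherwise. Since $v_2$ annihilates $V$ and $\rho(u_1)v_2=0$, formulas \eqref{eq:equivalent-cocycle1}--\eqref{eq:equivalent-cocycle2} give $\rho'=\rho$ and $\theta'=\theta$. So cochains identify nothing, and normalization must come from rescaling the basis.

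The main obstacle is this normalization. Automorphisms of $V$ commuting with $\beta$ and preserving $[v_1,v_1]_V=v_2$ are very rigid. A scaling $v_1\mapsto kv_1$ forces $k=1$, so I would instead rescale $u_1\mapsto tu_1$, which preserves $J_1^0(a)$. This multiplies $y_1$ by $t$ and $y$ by $t^2$.
\begin{itemize}
\item For $a=-1$, a nonzero $y$ becomes $1$. This gives $C^2_{1,2}(-1)$.
\item For $a=1$ with $y_1=0$, the same scaling gives $C^2_{1,2}(1)$.
\item For $a=1$ with $y_1\neq0$, scaling gives $y_1=1$. The remaining datum is the invariant $y/y_1^2$.
\end{itemize}
I would first try to absorb this invariant using automorphisms $v_1\mapsto v_1+sv_2$, $v_2\mapsto v_2$. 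These commute with $\beta$ and preserve the bracket, but they leave $\rho(u_1)$ and $\theta$ unchanged. If no further freedom exists, the honest conclusion is that the value $y\in\{0,1\}$ in the first row should be read up to this scaling, so that the row is a family $C^1_{1,2}(1)$ parametrized by $y/y_1^2$. The cases $y=0$ and $y\neq0$ are then the two genuinely distinct subcases recorded in the table.
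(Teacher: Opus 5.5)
Your route is the paper's: specialize Table~\ref{tab:normal-forms-jordan} at $\lambda=1$, $b=1$. Your individual computations are all correct:
\begin{itemize}
\item the checks of \eqref{def:rep2} and \eqref{def:rep3};
\item the observation that a $1$-cochain has $h(u_1)\in\langle v_2\rangle$ and acts trivially through \eqref{eq:equivalent-cocycle1}--\eqref{eq:equivalent-cocycle2};
\item the description $v_1\mapsto v_1+sv_2$, $v_2\mapsto v_2$ of $\operatorname{Aut}(V)$.
\end{itemize}
The two arguments part ways only at the last step. The paper simply writes ``normalizing $y_1,y\in\{0,1\}$'', and your computation shows that this step cannot be carried out. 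Every isomorphism of $J\oplus V$ mapping $V$ onto $V$ is a composite of some $h$, some element of $\operatorname{Aut}(V)$ and some $u_1\mapsto tu_1$. Each of these preserves $y/y_1^2$. (Taken literally, Definition~\ref{def:equivalent-cocycles} allows only $h$, and then not even $y$ can be rescaled.)

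Hence for $a=1$ the cocycle $\rho(u_1)v_1=v_2$, $\theta(u_1,u_1)=2v_2$ is equivalent to none of the listed ones. So the statement is false as a claim about equivalence of cocycles, even after adjoining basis changes of $J$ and $V$. Your proposal does not prove it, and no argument can. You should say so outright with this counterexample rather than hedge with ``if no further freedom exists'', since you have already used up all the freedom available. Your last sentence also needs correcting: the first row is a one-parameter family of pairwise inequivalent classes indexed by $c=y/y_1^2\in\mathbb{C}$, not two subcases.

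The table's list can only be recovered through a coarser relation. That relation is isomorphism of the total algebra $M=J\oplus V$ by maps that need not preserve $V$. Such maps exist because for $a=1$ the map $\alpha_M$ has the single eigenvalue $1$, so the complement $J$ is not canonical. For $(y_1,y)=(1,2)$, take the basis $u_1'=u_1$, $v_1'=2v_1-u_1$, $v_2'=2v_2$. It satisfies the same $\alpha_M$-relations, and in it $[u_1',v_1']=0$ and $[u_1',u_1']=[v_1',v_1']=v_2'$. This is $C^2_{1,2}(1)$.

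In general the only invariants are:
\begin{itemize}
\item whether $[\ker(\alpha_M-\mathrm{id}),\ker(\alpha_M-\mathrm{id})]=0$, i.e.\ whether $y=0$;
\item whether $\dim\operatorname{Ann}(M)=2$, i.e.\ whether $y=y_1^2$.
\end{itemize}
This gives exactly the three classes $(y_1,y)\in\{(1,0),(1,1),(0,1)\}$, which is what Proposition~\ref{prop:3dim-regular} actually uses. But under this relation, generic $(1,c)$ with $c\notin\{0,1\}$ lands in $C^2_{1,2}(1)$, not in the first row as the paper's proof asserts. This relation also classifies algebras, not elements of $H^2_{\mathrm{na}}(J,V)$. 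So neither the paper's one-line normalization nor your proposal establishes the statement in its cocycle form; your analysis correctly diagnoses why.
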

	\begin{proof}
		Specializing Table~\ref{tab:normal-forms-jordan} at $\lambda=1$, $b=1$: if $a\neq1$, we need $a^2=1$, giving $C^2_{1,2}(-1)$; if $a=1$, normalizing $y_1,y\in\{0,1\}$ yields $C^1_{1,2}(1)$ (when $y_1\neq0$) and $C^2_{1,2}(1)$ (when $y_1=0$, $y=1$).
	\end{proof}
	
	%%%%%%%%%%%%%%%%%%%%%%%%%%%%%%%%%%%%%%%%%%%%%%%%%%%%%%%
	We now turn to the classification of $2$-cocycles for the case where $J$ is a 
	$2$-dimensional \textbf{regular} Hom-Jacobi-Jordan algebra and $V$ is a $1$-dimensional 
	module with a trivial twisting map.
	
	\begin{proposition}\label{prop:2dim-regular-reps-1dim}
		Let $J$ be a two-dimensional regular Hom-Jacobi-Jordan 
		algebra and let $(V, [\cdot,\cdot]_V, 0)$ be a 
		one-dimensional module with a trivial twisting map.
		Then any $2$-cocycle $(\rho, \theta)$ of $J$ 
		with coefficients in $V$ is equivalent to the 
		trivial cocycle, that is,
		$
		\rho = 0 \quad \text{and} \quad \theta = 0.
		$
	\end{proposition}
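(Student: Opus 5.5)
Let $(J,[\cdot,\cdot],\alpha)$ be one of the algebras of Theorem~\ref{thm:2dim-regular}, with basis $\{e_1,e_2\}$, and let $V=\langle v\rangle$ with $\beta=0$. Write $[v,v]_V=\mu v$. The plan is to show directly that both $\theta$ and $\rho$ vanish. We will not need any coboundary $h$; in fact $C^1_{\alpha,\beta}(J,V)=0$ here, since $h\circ\alpha=\beta\circ h=0$ and $\alpha$ is bijective.

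\textbf{Step 1: $\theta=0$.} Condition~\eqref{def:2cohain} gives $\theta(\alpha(x),\alpha(y))=\beta(\theta(x,y))=0$ for all $x,y\in J$. Since $\alpha$ is surjective, $\theta\equiv 0$. This is the same mechanism as in Proposition~\ref{prop:1dim-regular-trivial-beta}, and equivalently Remark~\ref{rem:kcochain-eigenvalue}, because $0$ is not an eigenvalue of $\alpha^{\otimes 2}$.

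\textbf{Step 2: $\rho=0$.} Condition~\eqref{def:rep1} reads $\rho(\alpha(x))\circ\beta=\beta\circ\rho(x)$. With $\beta=0$ both sides vanish, so this gives no information. I therefore plan to use~\eqref{def:rep2} instead. With $\beta=0$ and $\theta=0$ it becomes
\[
\rho(\alpha(x))\rho(y)+\rho(\alpha(y))\rho(x)=0 .
\]
Since $\dim V=1$, each $\rho(x)$ is a scalar $r(x)$, with $r\colon J\to\mathbb{C}$ linear. The identity is then $r(\alpha(x))r(y)+r(\alpha(y))r(x)=0$. Taking $y=x$ gives $2r(\alpha(x))r(x)=0$, so $r(\alpha(x))r(x)=0$ for all $x$.

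\textbf{Step 3: this forces $r=0$.} This is the main point. Suppose $r\neq0$. Then $\ker r$ is a line and $U=\{x: r(x)\neq0\}$ is dense, and on $U$ we have $r\circ\alpha=0$. A polynomial identity holding on a dense set holds everywhere, so $r\circ\alpha\equiv0$. Since $\alpha$ is bijective, this gives $r\equiv0$, a contradiction. Hence $r=0$, i.e.\ $\rho=0$.

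Concretely, the argument only needs that the quadratic form $x\mapsto r(\alpha x)r(x)$ is identically zero. This means the product of two linear forms $r\circ\alpha$ and $r$ vanishes, so one of them is zero, and both cases give $r=0$ because $\alpha$ is invertible.

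Conditions~\eqref{def:cocycle1} and~\eqref{def:rep3} then hold trivially. So every $2$-cocycle equals $(0,0)$, and in particular it is equivalent to the trivial cocycle.
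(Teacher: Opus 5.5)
Your proof is correct. It uses the same two conditions as the paper: \eqref{def:2cohain} with $\beta=0$ to kill $\theta$, and \eqref{def:rep2} on the diagonal $x=y$ to kill $\rho$. The difference lies in how you conclude.

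The paper puts $\alpha$ in Jordan normal form and splits into two cases. In the diagonal case $\mathrm{diag}(a,b)$, evaluating on basis vectors gives $a^2\theta(u_1,u_1)=b^2\theta(u_2,u_2)=ab\,\theta(u_1,u_2)=0$ and $ax_1^2=bx_2^2=0$. In the Jordan-block case it only asserts that one gets an ``analogous triangular system''.

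You argue without coordinates. Surjectivity of $\alpha$ gives $\theta=0$ at once. The identity $r(\alpha(x))\,r(x)\equiv 0$ then forces $r\circ\alpha=0$ or $r=0$, because a product of two linear forms that vanishes identically has a zero factor. Equivalently, $J$ cannot be the union of the subspaces $\ker r$ and $\ker(r\circ\alpha)$ unless one of them is all of $J$; this phrasing avoids the appeal to density.

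What your route buys is uniformity. There is no case split, and the Jordan-block case that the paper leaves as a sketch is handled automatically. The argument also applies verbatim to a regular $J$ of any dimension, since it uses only $\dim V=1$, $\beta=0$, $\mathrm{char}\,\mathbb{K}\neq 2$ and invertibility of $\alpha$.

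Your remark that $C^1_{\alpha,\beta}(J,V)=0$ makes explicit that ``equivalent to the trivial cocycle'' here means literally equal to $(0,0)$, which the paper leaves implicit. The paper's coordinate computation, for its part, matches the explicit style of the other low-dimensional cases in that section.
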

	\begin{proof}
		Let $\{u_1, u_2\}$ and $\{v_1\}$ be bases for $J$ and $V$, respectively, and write $\rho(u_i) = x_i v_1$. Up to a change of basis, the automorphism $\alpha$ takes either a diagonal form $\mathrm{diag}(a,b)$ with $a,b \neq 0$, or a non-diagonal Jordan canonical form $\left(\begin{smallmatrix} a & 0 \\ 1 & a \end{smallmatrix}\right)$ with $a \neq 0$.
		
		In the diagonal case, condition~\eqref{def:2cohain} with $\beta = 0$ yields
		\[
		a^2\theta(u_1,u_1) = b^2\theta(u_2,u_2) = ab\theta(u_1,u_2) = ab\theta(u_2,u_1) = 0,
		\]
		which directly gives $\theta = 0$ since $a,b \neq 0$. Substituting $\theta = 0$ into~\eqref{def:rep2} reduces to $ax_1^2 = bx_2^2 = 0$, forcing $x_1 = x_2 = 0$, so $\rho = 0$.
		
		Similarly, when $\alpha$ is a Jordan block, evaluating~\eqref{def:2cohain} and~\eqref{def:rep2} leads to an analogous triangular system whose sequential resolution yields $\theta = 0$ and $\rho = 0$.
	\end{proof}
	%\begin{rem}
	%	This result shows that when $\beta = 0$, 
	%	there are no nontrivial extensions of a 
	%	\textbf{regular} two-dimensional Hom-Jacobi-Jordan algebra 
	%	by a one-dimensional module. 
	%\end{rem}	
	%%%%%%%%%%%%%%%%%%%%%%%%%%%%%%%%%%%%%%%%%%%%
	%%%%%%%%%%%%%%%%%%%%%%%%%%%%%%%%%
	%%%%%%%%%%%%%%%%%%%%%%%%%%%%%%%%%
	\subsection{$3$-dimensional split extensions}
	\label{subsec:dim3}
	%%%%%%%%%%%%%%%%%%%%%%%
We now apply the cohomological framework of the preceding sections to classify $3$-dimensional Hom-Jacobi-Jordan algebras. By evaluating the explicit $2$-cocycles computed in Subsections~\ref{subsec:dim1} and~\ref{subsec:dim2}, we classify all $3$-dimensional split extensions. In the nonregular setting, the Fitting decomposition ensures that every algebra with a nonzero regular part arises as such an extension (the purely nilpotent cases are left for future work). In the regular setting, decomposable algebras are obtained via split extensions, while indecomposable algebras are constructed by twisting classical Jacobi--Jordan algebras.

Pairwise non-isomorphism in the resulting classification tables is guaranteed by the bijection between $H^2_{\mathrm{na}}(J,V)$ and $\mathrm{Ext}(J,V)$ (Theorem~\ref{thm:ext-cohomology}), the isomorphism invariance of the Fitting decomposition (Lemma~\ref{lem:fitting-invariant}), and standard algebraic invariants such as $\dim M^2$ and continuous parameters.

\begin{lemma}\label{lem:fitting-invariant}
	Let $(M,[\cdot,\cdot]_M,\alpha_M)$ and $(M',[\cdot,\cdot]_{M'},\alpha_{M'})$ be isomorphic finite-dimensional Hom-Jacobi-Jordan algebras, via $\varphi:M\to M'$. Then $\varphi(\operatorname{Im}(\alpha_M^k))=\operatorname{Im}(\alpha_{M'}^k)$ and $\varphi(\ker(\alpha_M^k))=\ker(\alpha_{M'}^k)$ for all $k\ge 0$. In particular, $\dim\operatorname{Im}(\alpha_M^k)$ is an isomorphism invariant.
\end{lemma}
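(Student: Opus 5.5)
The plan is to use only the two defining properties of an isomorphism $\varphi$ of Hom-Jacobi-Jordan algebras: $\varphi$ is a linear bijection, and $\varphi\circ\alpha_M=\alpha_{M'}\circ\varphi$ (Definition~\ref{def:homo}). The bracket plays no role, so the statement is really a fact about conjugate linear maps.

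First, an induction on $k$ gives $\varphi\circ\alpha_M^k=\alpha_{M'}^k\circ\varphi$ for all $k\ge0$. The case $k=0$ is trivial. For the inductive step,
\[
\varphi\alpha_M^{k+1}=(\varphi\alpha_M^k)\alpha_M=\alpha_{M'}^k\varphi\alpha_M=\alpha_{M'}^{k+1}\varphi .
\]

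Next I treat images. For $x\in M$ we have $\varphi(\alpha_M^k(x))=\alpha_{M'}^k(\varphi(x))$, which gives $\varphi(\operatorname{Im}\alpha_M^k)\subseteq\operatorname{Im}\alpha_{M'}^k$. For the reverse inclusion, take $y=\alpha_{M'}^k(x')$. Since $\varphi$ is surjective, write $x'=\varphi(x)$. Then $y=\varphi(\alpha_M^k(x))$.

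For kernels, $\alpha_M^k(x)=0$ if and only if $\varphi(\alpha_M^k(x))=0$, because $\varphi$ is injective. By the relation above this holds if and only if $\alpha_{M'}^k(\varphi(x))=0$. Hence $x\in\ker\alpha_M^k$ exactly when $\varphi(x)\in\ker\alpha_{M'}^k$. Combined with the bijectivity of $\varphi$, this gives $\varphi(\ker\alpha_M^k)=\ker\alpha_{M'}^k$.

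Finally, $\varphi$ is injective, so $\dim\varphi(W)=\dim W$ for every subspace $W$. This yields the invariance of $\dim\operatorname{Im}(\alpha_M^k)$, and likewise of $\dim\ker(\alpha_M^k)$. Taking $k=\dim M$ shows that $\varphi$ carries the Fitting decomposition of Theorem~1.2 onto that of $M'$. There is no real obstacle here; the only care needed is to use surjectivity for the image equality and injectivity for the kernel equality.
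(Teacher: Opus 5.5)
Your proof is correct and follows the same route as the paper. Both arguments derive $\varphi\circ\alpha_M^k=\alpha_{M'}^k\circ\varphi$ from the intertwining relation and then read off the image and kernel equalities from the bijectivity of $\varphi$; you simply spell out the induction and the separate roles of surjectivity (for images) and injectivity (for kernels), which the paper leaves implicit.
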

\begin{proof}
	Since $\varphi\circ\alpha_M=\alpha_{M'}\circ\varphi$ and $\varphi$ is a linear bijection, $\varphi\circ\alpha_M^k=\alpha_{M'}^k\circ\varphi$ for all $k$, which gives both identities directly.
\end{proof}

\begin{theorem}\label{thm:main-classification}
	Every non-abelian $3$-dimensional non-regular Hom-Jacobi-Jordan algebra
	over $\mathbb{C}$ whose regular part is nonzero, i.e.\
	$\operatorname{Im}(\alpha_M^3)\neq 0$, is isomorphic to exactly one
	of the following pairwise non-isomorphic algebras.
	\begin{longtable}{|c|p{10cm}|}
		\caption{$3$-dimensional non-regular non-abelian Hom-Jacobi-Jordan algebras over $\mathbb{C}$ with nonzero regular part.}\label{tab:hjj-3d-nonregular} \\
		\hline
		
		\multicolumn{2}{|c|}{\small\itshape Decomposable case ($1$D regular $\oplus$ $2$D zero-twist) with $\alpha = \mathrm{diag}(a,0,0)$} \\
		\hline
		$J^{n1}_{1,2}=J^0_1(a)\oplus J^{0z}_2$ & $[e_1,e_2]=e_3$ \\ \hline
		$J^{n2}_{1,2}=J^0_1(a)\oplus J^{1z}_2$ & $[e_2,e_2]=e_3,\,[e_1,e_2]=t\,e_3,\ t\in\{0,1\} $ \\ \hline
		$J^{n3}_{1,2}=J^0_1(a)\oplus J^{2z}_2$ & $ [e_2,e_2]=e_2,\ [e_3,e_3]=e_2$ \\ \hline
		%$J^{n4}_{1,2}=J^0_1(a)\oplus J^{3z}_2(0)$ & $[e_1,e_2]=z\,e_3,\ z\in\{0,1\},\ [e_2,e_2]=e_2$ \\ \hline
		$J^{n4}_{1,2}(\mu)=J^0_1(a)\oplus J^{3z}_2(\mu)$ & $[e_2,e_2]=e_2,\ [e_2,e_3]=\mu e_3$ \\ \hline
		$J^{n5}_{1,2}(\lambda)=J^0_1(a)\oplus J^{4z}_2(\lambda)$ & $[e_2,e_2]=e_2,\ [e_2,e_3]=\lambda e_3,\ [e_3,e_3]=e_3$ \\ \hline
		$J^{n6}_{1,2}=J^0_1(a)\oplus J^{5z}_2$ & $[e_2,e_2]=e_2+e_3,\ [e_2,e_3]=\tfrac12 e_3$ 
	\\ \hline
		$J^{n7}_{1,2}=J^0_1(a)\oplus J^{6z}_2$ & $[e_2,e_3]=e_2,\ [e_3,e_3]=-e_3$ \\ \hline	
			$J^{n8}_{1,2}=J^0_1(a)\oplus J^{7z}_2$ & $[e_2,e_2]=e_3,\ [e_2,e_3]=e_2,\ [e_3,e_3]=-e_3$ \\ \hline	
			$J^{n9}_{1,2}=J^0_1(a)\oplus J^{8z}_2$ & $ [e_2,e_3]=e_2$ \\ \hline	
		\multicolumn{2}{|c|}{\small\itshape Decomposable case ($1$D regular $\oplus$ $2$D nilpotent) with $\alpha = \left(\begin{smallmatrix}a&0&0\\0&0&0\\0&1&0\end{smallmatrix}\right)$} \\
		\hline
		$J^{m1}_{1,2}=J^0_1(a)\oplus J_2^{0n}$ & $[e_1,e_2]=e_3$ \\ \hline
		$J^{m2}_{1,2}=J^0_1(a)\oplus J_2^{1n}$ & $[e_1,e_2]=te_3,\ t\in\{0,1\},\ [e_2,e_2]=e_3$ \\ \hline
	%	$J^{m3}_{1,2}=J^0_1(a)\oplus J_2^{1n}$ & $[e_1,e_2]=e_3,\ [e_2,e_2]=e_3$ \\ \hline
		%$J^{m4}_{1,2}=J^0_1(a)\oplus J_2^{2n}$ & $[e_2,e_3]=e_3$ \\ \hline
		$J^{m3}_{1,2}=J^0_1(a)\oplus J_2^{2n}$ & $[e_1,e_2]=te_3,\ t\in\{0,1\},\,\ [e_2,e_3]=e_3$ \\ \hline \hline
		
		\multicolumn{2}{|c|}{\small\itshape Decomposable case ($2$D regular $\oplus$ $1$D zero-twist)} \\
		\hline
		\multicolumn{2}{|c|}{\small\itshape Case 1: $\alpha =\mathrm{diag}(a,a^2,0)\ (a\neq0)$} \\
		\hline
		$J^{n1}_{2,1}=J^1_{1,1}(a)\oplus J_1^{0z}$ & $[e_1,e_1]=e_2,\ [e_3,e_3]=te_3,\ \ t\in\{0,1\}$ \\ \hline
		\multicolumn{2}{|c|}{\small\itshape Case 2: $\alpha = \left(\begin{smallmatrix}1&0&0\\1&1&0\\0&0&0\end{smallmatrix}\right)$} \\
		\hline
		$J^{n2}_{2,1}=J^1_{2}\oplus J_1^{0z}$ & $[e_1,e_1]=e_2,\ [e_3,e_3]=te_3,\ \ t\in\{0,1\}$ \\ \hline
	\end{longtable}
	\normalsize
	\medskip
	\noindent For the families $J^{n4}_{1,2}(\mu)$ and $J^{n5}_{1,2}(\lambda)$ indexed by a parameter, non-isomorphism follows directly from the value of the parameter: distinct values of $\mu$ always give non-isomorphic algebras, while for $J^{n5}_{1,2}(\lambda)$ the values $\lambda$ and $1-\lambda$ give isomorphic algebras.

\end{theorem}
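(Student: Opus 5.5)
The plan is to use the Fitting decomposition of Theorem 1.2 to reduce the classification to split extensions, and then read off the cocycles from Section~\ref{subsec:dim2}. Let $M$ be a $3$-dimensional non-regular non-abelian HJJ algebra with $\operatorname{Im}(\alpha_M^3)\neq0$. Set $J=\operatorname{Im}(\alpha_M^3)$ and $V=\ker(\alpha_M^3)$. By Theorem 1.2, $M=J\oplus V$ with $V$ an ideal, and both summands are $\alpha_M$-invariant. Since $M$ is non-regular, $V\neq0$, so $(\dim J,\dim V)\in\{(1,2),(2,1)\}$.

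By Remark~\ref{rem:split-invariant} the sequence $0\to V\to M\to M/V\cong J\to 0$ is a split extension. Theorem~\ref{thm:every-extension-standard} then identifies $M$ with $(J\oplus V,[\cdot,\cdot]_{(\rho,\theta)},\alpha+\beta)$ for a $2$-cocycle $(\rho,\theta)$, where $J$ is regular and $\beta$ is nilpotent. Lemma~\ref{lem:fitting-invariant} shows that the isomorphism type of the pair $(J,V)$, and in particular $\beta$ up to conjugacy, is an invariant of $M$. Hence it suffices, for each admissible pair, to list $H^2_{\mathrm{na}}(J,V)$ via Theorem~\ref{thm:ext-cohomology}, and then to quotient by the residual automorphisms of $J$ and $V$. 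Those automorphisms are exactly what absorbs scalars such as $t$.

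\textbf{Case $(1,2)$.} Here $J\cong J^0_1(a)$ by Proposition~\ref{prop:1dim-class}. The algebra $V$ is a $2$-dimensional HJJ algebra with nilpotent twist, so either $\beta=0$ and $V$ is one of $J^{0z}_2,\dots,J^{8z}_2$ (Theorem~\ref{thm:2dim-nonregular-unified} together with the abelian one), or $\beta$ is a nonzero nilpotent Jordan block and $V\in\{J^{0n}_2,J^{1n}_2,J^{2n}_2\}$.
\begin{itemize}
\item For $\beta=0$, Theorem~\ref{thm:cocycles-J1-Jz2-unified} gives the following. If $[V,V]_V\subseteq\langle v_2\rangle$, the cocycles are the trivial one and $\rho(u_1)v_1=v_2$, which yields the entries $J^{n1}_{1,2}$ and $J^{n2}_{1,2}$ with $t\in\{0,1\}$. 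The trivial cocycle on abelian $V$ gives an abelian algebra and is discarded. In all remaining cases only the trivial cocycle occurs, which yields the direct sums $J^{n3}_{1,2},\dots,J^{n9}_{1,2}$.
\item For nilpotent $\beta\neq0$, Theorem~\ref{thm:cocycle-J1-J2-nilpotent} similarly produces $J^{m1}_{1,2},J^{m2}_{1,2},J^{m3}_{1,2}$.
\end{itemize}
After relabeling bases I will need to check that the bracket $[e_1,e_2]=e_3$ matches $\rho(u_1)v_1=v_2$ under the paper's indexing.

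\textbf{Case $(2,1)$.} Here $V$ is $1$-dimensional with $\beta=0$, so $V$ is abelian or equal to $J^{1z}_1$; this is the source of $t\in\{0,1\}$. The algebra $J$ is one of the regular algebras of Theorem~\ref{thm:2dim-regular}. Proposition~\ref{prop:2dim-regular-reps-1dim} forces the cocycle to be trivial, so $M=J\oplus V$ is a direct sum. Non-abelianness requires $J$ non-abelian or $t=1$. The case of abelian $J$ with $t=1$ seems to be missing from the table, and I would check that carefully. This leaves $J^1_{1,1}(a)$ and $J^1_2$, giving $J^{n1}_{2,1}$ and $J^{n2}_{2,1}$.

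\textbf{Non-isomorphism.} Pairwise non-isomorphism follows from several invariants. Lemma~\ref{lem:fitting-invariant} shows that any isomorphism preserves $J$ and $V$ and the Jordan type of $\alpha_M$, which separates the blocks. Within a block, the isomorphism class of $V$ and the class in $H^2_{\mathrm{na}}$ (Theorem~\ref{thm:ext-cohomology}) distinguish the algebras. Parameter identifications are inherited from the remark following Theorem~\ref{thm:2dim-nonregular-unified}.

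The main obstacle is that Theorem~\ref{thm:ext-cohomology} classifies extensions only up to equivalence with identity maps on $J$ and $V$, whereas isomorphism of the algebras allows arbitrary automorphisms of $J$ and $V$. I would handle this by computing $\operatorname{Aut}$ of each $V$ commuting with $\beta$ and showing that it acts on the normalized cocycles only by rescaling. This is straightforward when $\dim V\le 2$, but it needs care for $J^{1z}_2$ and the $J^{n}_2$ cases, where the parameter $t$ must be shown not to merge with $t=0$; for that step I would use $\dim M^2$ and the annihilator of $e_1$ as invariants.
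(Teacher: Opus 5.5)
Your route is the paper's own: the Fitting splitting $M=\operatorname{Im}(\alpha_M^3)\oplus\ker(\alpha_M^3)$, then cocycles read off from Theorems~\ref{thm:cocycles-J1-Jz2-unified} and~\ref{thm:cocycle-J1-J2-nilpotent} and Proposition~\ref{prop:2dim-regular-reps-1dim}, with Lemma~\ref{lem:fitting-invariant} for invariance.

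Your worry about equivalence versus isomorphism is justified, and it is easy to settle here. The Fitting summands are canonical, so isomorphism classes are the orbits of $\operatorname{Aut}(J)\times\operatorname{Aut}(V)$ on $Z^2_{\mathrm{na}}(J,V)$. Moreover $C^1_{\alpha,\beta}(J,V)=0$: in case $(1,2)$ a nonzero $h(u_1)$ would be an eigenvector of the nilpotent $\beta$ for the eigenvalue $a\neq0$, and in case $(2,1)$ we have $h\circ\alpha=0$ with $\alpha$ bijective.

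The real problem is that this argument, carried out correctly, does not yield the stated table, and you already found one reason. In case $(2,1)$, the algebras $J^0_{1,1}(a,b)\oplus J^{1z}_1$ and $J^0_2(a)\oplus J^{1z}_1$ (abelian regular part, $[e_3,e_3]=e_3$) are non-abelian and non-regular, with $\dim\operatorname{Im}(\alpha_M^3)=2$. By Lemma~\ref{lem:fitting-invariant} the regular part $\operatorname{Im}(\alpha_M^3)$, with its bracket, is an isomorphism invariant. So these algebras are not isomorphic to $J^{n1}_{2,1}$ or $J^{n2}_{2,1}$, whose regular part is non-abelian. After flagging them you cannot simply ``leave'' $J^1_{1,1}(a)$ and $J^1_2$: these are counterexamples to the statement. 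The paper's proof makes the same moves and has the same defect.

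There is a second, hidden gap in the subcase $\beta=0$, where you take Theorem~\ref{thm:cocycles-J1-Jz2-unified} on trust. Its source, Lemma~\ref{prop:cocycle-J1-beta-zero}(ii), is wrong. It first moves $\rho(u_1)$ to $\left(\begin{smallmatrix}0&0\\1&0\end{smallmatrix}\right)$ by a change of basis of $V$, and then tests the basis-dependent condition $v_1\in[V,V]_V$ in the old basis. Intrinsically, with $[u_1,u_1]=0$, $\theta=0$ and $\beta=0$, conditions~\eqref{def:rep2} and~\eqref{def:rep3} say exactly $\rho(u_1)^2=0$ and $\rho(u_1)([V,V]_V)=0$. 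Hence a nonzero $\rho$ exists if and only if $\dim[V,V]_V\le 1$, which also holds for $J^{2z}_2$, $J^{3z}_2(0)$ and $J^{8z}_2$.

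For instance, take $V=J^{8z}_2$ and set $\rho(u_1)v_2=v_1$, $\rho(u_1)v_1=0$. This gives $[e_1,e_3]=e_2$, $[e_2,e_3]=e_2$, $\alpha=\mathrm{diag}(a,0,0)$. Multiplicativity and the Hom-Jacobi identity are immediate: $[M,M]=\langle e_2\rangle\subseteq\ker\alpha$, and $[e_1,e_2]=0$. This algebra is not isomorphic to $J^{n9}_{1,2}$, because $[\operatorname{Im}(\alpha_M^3),\ker(\alpha_M^3)]$ is an invariant.

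Finally, pairwise non-isomorphism cannot be inherited from Theorem~\ref{thm:2dim-nonregular-unified}. In $J^{6z}_2$, the elements $f_1=-e_2$ and $f_2=e_1$ satisfy $[f_1,f_1]=f_1$, $[f_1,f_2]=-f_2$ and $[f_2,f_2]=0$. So $J^{6z}_2\cong J^{3z}_2(-1)$, and therefore $J^{n7}_{1,2}\cong J^{n4}_{1,2}(-1)$.

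Your orbit computation is the right tool, but what it proves is a corrected classification, not the one stated.
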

	\begin{proof}
		Let $(M,[\cdot,\cdot]_M,\alpha_M)$ be a non-abelian $3$-dimensional non-regular HJJ algebra over $\mathbb{C}$ with $\operatorname{Im}(\alpha_M^3)\neq 0$. By the Fitting decomposition (Theorem~\ref{thm:decomposition_HJJ}), $M=J\oplus K$ where $J=\operatorname{Im}(\alpha_M^3)$ is regular and $K=\operatorname{Ker}(\alpha_M^3)$ is an ideal. Since $\operatorname{Im}(\alpha_M^3)\neq 0$ and $M$ is non-regular, $\dim J\in\{1,2\}$.
		
		\textbf{Case 1: $\dim J=1$.} Then $J\cong J_1^0(a)$ for some $a\in\mathbb C^*$, and $K$ is a $2$-dimensional ideal. The restriction $\beta=\alpha_M|_K$ is non-invertible. Hence, by the $2$-dimensional classification, $\beta$ is either zero (zero-twist case) or nilpotent nonzero (nilpotent case).

	%	=================================

	\textit{Subcase 1.1: $1$D regular $\oplus$ $2$D zero-twist.} Here $\beta=0$. By Theorem~\ref{thm:cocycles-J1-Jz2-unified}, the $2$-cocycles are either trivial or equivalent to the nontrivial cocycle $\rho(u_1)v_1=v_2$, $\theta=0$, according to the position of $[K,K]$. Combining these cocycles with the $2$-dimensional zero-twist algebras yields the families $J^{n1}_{1,2}$, $J^{n2}_{1,2}$, $J^{n3}_{1,2}$, $J^{n4}_{1,2}(\mu)$, $J^{n5}_{1,2}(\lambda)$, $J^{n6}_{1,2}$, $J^{n7}_{1,2}$, $J^{n8}_{1,2}$, and $J^{n9}_{1,2}$ displayed in the table.

	\textit{Subcase 1.2: $1$D regular $\oplus$ $2$D nilpotent.} Here $\beta$ is nilpotent nonzero. By Theorem~\ref{thm:cocycle-J1-J2-nilpotent}, $\theta=0$ and $\rho(u_1)$ is either $0$ or equivalent to $\rho(u_1)v_1=v_2$, $\rho(u_1)v_2=0$. The trivial cocycle gives $M\cong J\oplus K$, while the nontrivial one adds $[e_1,e_2]=e_3$. Combining these cocycles with $K\cong J_2^{0n},J_2^{1n},J_2^{2n}$ yields the algebras $J^{m1}_{1,2}$, $J^{m2}_{1,2}$,   and $J^{m3}_{1,2}$.

	%========================
	
	\textbf{Case 2: $\dim J=2$.} Then $K$ is $1$-dimensional and $\beta=\alpha_M|_K=0$. By Proposition~\ref{prop:2dim-regular-reps-1dim}, every $2$-cocycle is trivial, so $M\cong J\oplus K$. The non-abelian $2$-dimensional regular algebras $J_{1,1}^1$ and $J_2^1$ yield $J^{n1}_{2,1}$ and $J^{n2}_{2,1}$.

	%=================
%	\textbf{Non-isomorphism.} The cases are distinguished by $\alpha_M$: in Case~1, Subcase~1.1 has $\beta=0$, while Subcase~1.2 has $\beta$ nilpotent nonzero; Case~2 is characterized by $\dim(\operatorname{Im}\alpha_M^3)=2$. Within each subcase, bracket invariants ($\dim M^2$, parameters $\mu,\lambda$) ensure pairwise non-isomorphism.
	
	%========================================	
	\textbf{Non-isomorphism.} By Lemma~\ref{lem:fitting-invariant}, the cases are distinguished by $\alpha_M$: in Case~1, Subcase~1.1 has $\beta=0$, while Subcase~1.2 has $\beta$ nilpotent nonzero; Case~2 is characterized by $\dim(\operatorname{Im}\alpha_M^3)=2$. Within each subcase, bracket invariants ($\dim M^2$, parameters $\mu,\lambda$) ensure pairwise non-isomorphism.

	\end{proof}
	%%%%%%%%%%%%%%%%%%%%%%%%%%%%%%%%%%%%%%%%%%%%%%%%%%%%%%%%%%%%%
	
	%=================================
	
	We now present the classification of non-abelian regular
	$3$-dimensional Hom-Jacobi-Jordan algebras.
%	============================

	\begin{proposition}\label{prop:3dim-regular}
		Every non-abelian $3$-dimensional regular Hom-Jacobi-Jordan 
		algebra is isomorphic to exactly one of the 
		following pairwise non-isomorphic algebras.	
		%============================
		The decomposable algebras are denoted by $J^k_{i,j}$, where
		$i = \dim J$, $j = \dim V$, and $k$ is the index; the indecomposable algebras are denoted by $J^k_3$.	Unless otherwise specified, $a,b,c\in\mathbb{C}^*$.	
	%	================	
		\begin{enumerate}[label=$\bullet$]

			\item $J^1_{1,2}$: $J^0_1(a) \oplus J^0_{1,1}(a,b)$,\quad
			$[e_1,e_2] = e_3$,\quad
			$\alpha = \mathrm{diag}\!\left(a,b,ab\right)$.
			
			\item $J^2_{1,2}$: $J^0_1(a) \oplus J^0_{1,1}(a,b)$,\quad
			$[e_1,e_1] = e_2$,\quad
			$\alpha = \mathrm{diag}(a,a^2,c)$.
			
			\item $J^3_{1,2}$: $J^0_1(a) \oplus J^1_{1,1}(b)$,\quad
			$[e_1,e_1] = e_3$,\ $[e_2,e_2] = e_3$,\quad
			$\alpha = \mathrm{diag}(a,-a,a^2)$,\quad
			.

			\item $J^4_{1,2}$: $J^0_1(a) \oplus J^0_2(a)$,\quad
			$[e_1,e_1] = e_3$,\quad
			$\alpha = \begin{pmatrix} a&0&0\\0&a^2&0\\
				0&1&a^2 \end{pmatrix}$.

			\item $J^{5}_{1,2}$: $J^0_1(1) \oplus J^0_2(b)$,\quad
			$[e_1,e_2] = e_3$,\quad
			$\alpha = \begin{pmatrix} 1&0&0\\0&b&0\\
				0&1&b \end{pmatrix}$.
			\item $J^{6}_{1,2}$: $J^0_1(1) \oplus J^0_2(1)$,\quad
			$[e_1,e_1] = e_3$,\quad $[e_1,e_2] = e_3$,\quad
			$\alpha = \begin{pmatrix} 1&0&0\\0&1&0\\
				0&1&1 \end{pmatrix}$.
			%%%%%%%%%%%%%%%%%%%%%%%%%%%%%%%%%%%%%%%%%%%%%%%%%
			\item $J^{7}_{1,2}$: $J^0_1(a)\oplus J^1_2$,\quad$[e_2,e_2] = e_3$,\quad
			$\alpha = \begin{pmatrix} a&0&0\\0&1&1\\
				0&0&1 \end{pmatrix}$ .
			%%%%%%%%%%%%%%%%%%%%%%%%%%%%%%%%%%%%%%%%%%%%%%%%%	
			\item $J^{8}_{1,2}$: $J^0_1(1) \oplus J^1_2$,\quad$[e_1,e_2] = e_3$,\quad
			$[e_2,e_2] = e_3$,\quad
			$\alpha = \begin{pmatrix} 1&0&0\\0&1&1\\
				0&0&1 \end{pmatrix}$.
			%%%%%%%%%%%%%%%%%%%%%%%%%%%%%%%%%%%%%%%%%%%%%%%%%	
			\item $J^{9}_{1,2}$: $J^0_1(1) \oplus J^1_2$, $\quad[e_1,e_1] = e_3$, $\quad[e_1,e_2] = e_3$,\quad
			$[e_2,e_2] = e_3$,\quad
			$\alpha = \begin{pmatrix} 1&0&0\\0&1&1\\
				0&0&1 \end{pmatrix}$.
			%%%%%%%%%%%%%%%%%%%%%%%%%%%%%%%%%%%%%%%%%%%%%%%%%	
			\item $J^{10}_{1,2}$: $J^0_1(a) \oplus J^1_2$,\quad$[e_1,e_1] = e_3$,\quad
			$[e_2,e_2] = e_3$,\quad
			$\alpha = \begin{pmatrix} a&0&0\\0&1&1\\
				0&0&1 \end{pmatrix}$ \quad $(a^2=1)$.
			%%%%%%%%%%%%%%%%%%%%%%%%%%%%%%%%%%%%%%%%%%%%%%%%%		
			
			\item $J^{1}_{3}$: $[e_1,e_1] = e_3$,\quad
			$\alpha = \begin{pmatrix} 1&0&0\\1&1&0\\
				0&1&1 \end{pmatrix}$.			
			%\item $J^{2}_{3}$: $[e_1,e_1] =e_2+ e_3$,\ 
		%	\quad
		%	$\alpha = \begin{pmatrix} 1&0&0\\1&1&0\\
			%	0&1&1 \end{pmatrix}$.
			
		\end{enumerate}
	\end{proposition}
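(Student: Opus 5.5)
The classification splits according to whether $M$ is decomposable or indecomposable, and by the Jordan type of $\alpha_M$. Since $M$ is regular, $\alpha_M$ is an automorphism, and I would bring it to its Jordan canonical form over $\mathbb{C}$. There are three possibilities: three blocks ($\alpha_M$ diagonalizable), a $1\oplus 2$ block structure, or a single $3\times 3$ Jordan block. Whenever $\alpha_M$ has at least two blocks, I choose an $\alpha_M$-invariant complement decomposition $M=J\oplus V$ with $\dim J=1$, $\dim V=2$. Theorem~\ref{thm:regular-nilpotent} makes $M$ nilpotent, so $[M,M]_M\neq M$. I would then try to choose $V$ to be an $\alpha_M$-invariant ideal containing $[M,M]_M$, or at least satisfying $[M,V]_M\subseteq V$; the candidate is the generalized eigenspace sum containing the derived algebra. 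This places us in the setting of the Notation on page~\pageref{not:JV-decomposition}. Theorem~\ref{thm:every-extension-standard} then shows that $M$ is a split extension of $J\cong J^0_1(a)$ by a $2$-dimensional regular algebra $V$. By Theorem~\ref{thm:2dim-regular}, $V$ is one of $J^0_{1,1}(b,c)$, $J^1_{1,1}(b)$, $J^0_2(b)$, or $J^1_2$.

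For these four choices of $V$, Propositions~\ref{prop:dim-reps-J1}, \ref{prop:dim-reps-J1-on-J2}, \ref{prop:dim-reps-J1-on-J3} and~\ref{prop:dim-reps-J1-on-J4} list all nontrivial cocycle classes. Theorem~\ref{thm:ext-cohomology} turns each class into an algebra, via $[x+u,y+v]=[x,y]+\theta(x,y)+\rho(x)v+\rho(y)u+[u,v]_V$.
\begin{itemize}
\item $C^1_{1,2}(a,b,ab)$ gives $J^1_{1,2}$, and $C^2_{1,2}(a,a^2,c)$ gives $J^2_{1,2}$.
\item $C^3_{1,2}$ gives $J^3_{1,2}$ with $b=-a$; the case $b=a$ has to be recognized as already in the list after a change of basis.
\item The three classes of Proposition~\ref{prop:dim-reps-J1-on-J3} give $J^4_{1,2}$, $J^5_{1,2}$ and $J^6_{1,2}$.
\item For $V=J^1_2$, the trivial cocycle gives $J^7_{1,2}$, and the classes of Proposition~\ref{prop:dim-reps-J1-on-J4} give $J^8_{1,2}$, $J^9_{1,2}$ and $J^{10}_{1,2}$.
\end{itemize}
Abelian outcomes are discarded. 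Different choices of $J$ inside $M$ can yield the same algebra, so the main bookkeeping is to identify coincidences. For example, $J^0_1(a)\oplus J^1_{1,1}(b)$ with trivial cocycle is the same algebra as $J^2_{1,2}$ with the roles of the summands swapped.

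In the single-block case, $M$ is indecomposable as a $\mathbb{C}[\alpha_M]$-module, so no splitting is available. Here I would use Proposition~\ref{prop:Jordan}. The bracket $\alpha_M^{-1}[\cdot,\cdot]$ is a $3$-dimensional nilpotent Jacobi--Jordan algebra $(M,[\cdot,\cdot]')$, and $\alpha_M$ is an automorphism of it. The nonabelian $3$-dimensional Jacobi--Jordan algebras (from \cite{Burde-Fialowski2014}) are $[e_1,e_1]'=e_2$ with trivial extra summand, and $[e_1,e_1]'=e_2$, $[e_2,e_2]'=\ldots$; the latter is not possible by nilpotency in low dimension, so essentially only $[e_1,e_1]'=e_3$ up to basis survives. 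I then need an automorphism conjugate to a single Jordan block with eigenvalue $a$. Applying multiplicativity to $[e_1,e_1]$ forces $a^2=a$, hence $a=1$, in the same way as in Case~2 of Theorem~\ref{thm:2dim-regular}. Twisting back gives $[e_1,e_1]=\alpha_M(e_3)=e_3$, which is $J^1_3$. I would then check that no other placement of the unipotent block is compatible, by solving $\varphi[x,y]'=[\varphi x,\varphi y]'$ for a lower-triangular unipotent $\varphi$.

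Pairwise non-isomorphism would follow from three sources. Lemma~\ref{lem:fitting-invariant}, applied to generalized eigenspaces, gives the Jordan type of $\alpha_M$ as an invariant. The bijection in Theorem~\ref{thm:ext-cohomology} separates distinct cocycle classes. Finally, $\dim M^2$ and the position of $M^2$ relative to the eigenspaces distinguish the rest. I expect the main obstacle to be the reduction step. Showing that an $\alpha_M$-invariant ideal $V$ of dimension $2$ always exists with a $1$-dimensional invariant complement requires care, especially when eigenvalues coincide so that the invariant decomposition is not unique. It also interacts with deciding when two entries of the list coincide. I would handle it by taking $V\supseteq [M,M]_M$, which is $\alpha_M$-invariant by Proposition~\ref{prop:subalgebra-ideal}, and extending it $\alpha_M$-invariantly via the generalized eigenspace decomposition.
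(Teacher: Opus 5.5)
There is a genuine gap at exactly the step you flag as the main obstacle, and it cannot be repaired, because the list it is meant to justify is incomplete. Apart from that, your route is the paper's: put $\alpha_M$ in Jordan form; for two or more blocks, realize $M$ as a split extension of $J^0_1(a)$ by a $2$-dimensional regular ideal and read it off the cocycle tables; for one block, untwist via Proposition~\ref{prop:Jordan}.

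Here is the counterexample. Take $M=\langle e_1,e_2,e_3\rangle$ with $\alpha_M(e_1)=\nu e_1+e_2$, $\alpha_M(e_2)=\nu e_2$, $\alpha_M(e_3)=\nu^2e_3$, where $\nu\in\mathbb{C}^*$, and with $[e_1,e_1]_M=e_3$ as the only nonzero product. Multiplicativity holds because $[\alpha_M e_1,\alpha_M e_1]_M=\nu^2e_3=\alpha_M(e_3)$ and all other products vanish. The Hom-Jacobi identity holds because every double product is zero. So $M$ is a non-abelian regular Hom-Jacobi-Jordan algebra.

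Every $\alpha_M$-invariant line lies in $\langle e_2,e_3\rangle$. Moreover, $\langle e_2,e_3\rangle$ is the only invariant plane containing $e_3$, since $\alpha_M$ induces a single Jordan block on $M/\langle e_3\rangle$. Consequences:
\begin{itemize}
\item Your candidate $V\supseteq[M,M]_M=\langle e_3\rangle$ has no invariant complement.
\item Any invariant plane $P$ that does have an invariant complementary line contains some $x=e_1+\beta e_2+\gamma e_3$. Hence $P\neq\langle e_2,e_3\rangle$, so $e_3\notin P$, and $P$ is not an ideal because $[x,x]_M=e_3$.
\end{itemize}
By contrast, every $J^k_{1,2}$ in the list has such a splitting, namely $\langle e_1\rangle\oplus\langle e_2,e_3\rangle$, and $J^1_3$ has a single Jordan block. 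Both properties are preserved by isomorphisms, which intertwine the twisting maps. So $M$ is isomorphic to nothing in the list.

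The paper's proof has the same hole. In Subcase~2.2 it ``reduces'' the case $\dim J=2$, $\dim V=1$ to Subcase~2.1 by relabeling $u_1=g_1$, $v_1=f_2$, $v_2=f_1$, but the relabeled $V$ need not be an ideal. Your $M$ is exactly the split extension of $J^0_2(\nu)$ by $J^0_1(\nu^2)$ with $\rho=0$ and $\theta(e_1,e_1)=e_3$. No computation in the paper covers it, since Proposition~\ref{prop:2dim-regular-reps-1dim} only allows $\beta=0$. A correct classification must treat this $2+1$ splitting separately and add this family.

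Two further points.
\begin{itemize}
\item Theorem~\ref{thm:ext-cohomology} classifies extensions up to equivalence with $\mathrm{id}_J$ and $\mathrm{id}_V$ fixed. It does not classify the middle algebras up to isomorphism, so it cannot certify pairwise non-isomorphism. In fact the list contains coincidences. Read $J^7_{1,2}$--$J^{10}_{1,2}$ with $\alpha(e_2)=e_2+e_3$, $\alpha(e_3)=e_3$, which is the only multiplicative reading. Then:
\begin{itemize}
\item replacing $e_2$ by $e_2-\tfrac12 e_1$ gives $J^8_{1,2}\cong J^5_{1,2}$ at $b=1$;
\item replacing $e_2$ by $e_2-e_1$ gives $J^9_{1,2}\cong J^4_{1,2}$ at $a=1$;
\item the basis $-ie_1,\,-ie_1-e_2,\,-e_3$ identifies $J^{10}_{1,2}$ at $a=1$ with $J^6_{1,2}$;
\item swapping $e_1$ and $e_2$ gives $J^3_{1,2}(a)\cong J^3_{1,2}(-a)$.
\end{itemize}
\item Your list of $3$-dimensional Jacobi--Jordan algebras omits $[e_1,e_2]'=e_3$, which is the untwisting of $J^1_{1,2}$ and $J^3_{1,2}$. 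In the single-block case you must still rule it out. This works: its automorphisms act on $M/\langle e_3\rangle$ by conformal-orthogonal, hence semisimple, maps. Therefore $(\alpha_M-a\,\mathrm{id})^2=0$ whenever $\alpha_M$ has the single eigenvalue $a$, so no single $3\times3$ block occurs.
\end{itemize}
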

\begin{proof}
	    Let $(M,[\cdot,\cdot]_M,\alpha_M)$ be a non-abelian $3$-dimensional regular HJJ algebra. Write $\alpha=\alpha_M$. Since $\alpha$ is invertible, after a change of basis its Jordan canonical form over $\mathbb{C}$ is one of
	    \[
	    \mathrm{diag}(a,b,c),\qquad \begin{pmatrix} a&0&0\\0&b&0\\0&1&b\end{pmatrix},\qquad \begin{pmatrix} a&0&0\\1&a&0\\0&1&a\end{pmatrix},
	    \]
	    with $a,b,c\in\mathbb{C}^\times$. In the first two cases, the projection of $[\cdot,\cdot]_M$ onto $J$ is zero, so $J$ is abelian, and we write $M=J\oplus V$ with $J$ abelian regular and $V$ an ideal.
	    
\textit{Subcase 1.1: $\dim J=1$, $\dim V=2$.} Here, $J$ is equivalent 
to $J^0_1(a)$ by Proposition~\ref{prop:dim-reps-J1}. Furthermore, 
by Propositions~\ref{prop:dim-reps-J1} and 
\ref{prop:dim-reps-J1-on-J2}, the non-abelian extensions with 
diagonal $\alpha$ are, up to isomorphism, of type $J^k_{1,2}$ for 
$k=1,2,3$. However, when $b=a$, the algebra $J^3_{1,2}$, arising 
from the cocycle $C^3(a,b,a^2)$ with $b^2=a^2$, reduces to a lower 
type. Indeed, writing $c=a^2$, we have $\rho=0$ in $C^3(a,b,a^2)$, 
so that
\[
[J^0_1(a),J^0_1(b)]=[J^0_1(a),J^0_1(c)]=[J^0_1(a),J^0_1(a)]=0,
\qquad
[J^0_1(b),J^0_1(c)]=J^0_1(c).
\]
Hence $J^0_1(a)\oplus J^0_1(c)$ is abelian, and consequently
\[
J^3_{1,2}=J^0_1(a)\oplus\left(J^0_1(b)\oplus J^0_1(c)\right)
=J^0_1(b)\oplus\left(J^0_1(a)\oplus J^0_1(c)\right)
\cong J^0_1(b)\oplus J^0_{1,1}
=J^1_{1,2},
\]
where we used $J^0_1(a)\oplus J^0_1(c)\cong J^0_{1,1}$. Therefore, 
the case $b=a$ must be excluded from the parameter range of 
$J^3_{1,2}$, and the non-abelian extensions with diagonal $\alpha$ 
are, up to isomorphism, precisely those of type $J^k_{1,2}$ for 
$k=1,2,3$, with the additional restriction $b\neq a$ for $J^3_{1,2}$.
	    
	    \textit{Subcase 1.2: $\dim J=2$, $\dim V=1$.} Write $J=\langle f_1,f_2\rangle$ and $V=\langle g_1\rangle\cong J^0_1(c)$, with $\alpha(f_1)=a f_1$, $\alpha(f_2)=b f_2$, and $\beta(g_1)=c g_1$. Since the projection of the bracket onto $J$ is zero, the bracket relations are of the form
	    \[
	    [f_i,f_j]=x_{i,j}g_1,\qquad [f_i,g_1]=y_i g_1,\qquad [g_1,g_1]=0.
	    \]
	    Setting $u_1=f_1$, $v_1=g_1$, and $v_2=f_2$ reduces this case to Subcase~1.1.
	    
	    \noindent\textbf{Case 2:} $\alpha = \begin{pmatrix} a&0&0\\0&b&0\\0&1&b\end{pmatrix}$.
	    
	    \textit{Subcase 2.1: $\dim J=1$, $\dim V=2$.} Here $J\cong J^0_1(a)$ and $V$ is a $2$-dimensional regular algebra whose twisting map is a Jordan block with eigenvalue $b$. If $V$ is abelian ($V\cong J^0_2$), Proposition~\ref{prop:dim-reps-J1-on-J3} yields $J^4_{1,2}$ when $b=a^2$, $J^5_{1,2}$ when $a=1$, and $J^6_{1,2}$ when $a=b=1$. If $V$ is non-abelian ($V\cong J^1_2$, hence $b=1$), the trivial cocycle gives the direct sum $J^7_{1,2}$, and Proposition~\ref{prop:dim-reps-J1-on-J4} yields $J^8_{1,2}$ and $J^9_{1,2}$ when $a=1$, and $J^{10}_{1,2}$ when $a^2=1$.
	    
	    \textit{Subcase 2.2: $\dim J=2$, $\dim V=1$.} Here $V=\langle g_1\rangle$ corresponds to the one-dimensional block, with $\beta(g_1)=a g_1$, and $J=\langle f_1,f_2\rangle$ carries the Jordan block with eigenvalue $b$. Setting $u_1=g_1$, $v_1=f_2$, and $v_2=f_1$ reduces this case to Subcase~2.1, so no new algebras arise.
	    
	   \textbf{Case 3:} $\alpha$ is a single $3\times3$ Jordan block with 
	   eigenvalue $a\neq0$. By Proposition~\ref{prop:Jordan}, 
	   $[x,y]':=\alpha^{-1}([x,y]_M)$ defines a classical Jacobi-Jordan 
	   algebra, so by Case~1 its structure is $J^1_{1,2}$, $J^2_{1,2}$, or 
	   $J^3_{1,2}$. The algebra $J^3_{1,2}$ is excluded, since 
	   $\mathrm{diag}(a,-a,a^2)$ is not proportional to the identity. 
	   Twisting $J^1_{1,2}$ by this Jordan block gives $[e_1,e_2]=e_3$, but 
	   then $\alpha([e_1,e_1])\neq[\alpha(e_1),\alpha(e_1)]$, so the result 
	   fails multiplicativity and is not an HJJ algebra; the only 
	   $\alpha$-invariant line is $\langle e_3\rangle$, hence $[J,J]=\langle e_3\rangle$. 
	   For $J^2_{1,2}$, the classical Jacobi-Jordan algebra with 
	   $D(J')=\langle e_3\rangle$ is the representative $J'^2_{1,2}$, given 
	   by $[e_1,e_1]=e_3$. Twisting $J'^2_{1,2}$ by this Jordan block yields 
	   $[e_1,e_1]=e_3$, which one checks defines an HJJ algebra, namely $J^1_3$.

\end{proof}

These results suggest several directions for future research, including the study of higher-dimensional split extensions, the treatment of nonregular algebras with nonzero nilpotent twisting maps (i.e., $\operatorname{Im}(\alpha_M^3)=0$), deformation theory, central extensions, and the description of derivations and automorphism groups in the Hom-Jacobi-Jordan setting. The cohomological framework developed in this section provides an algorithmic method that can be systematically extended to higher dimensions.

%======================

\end{document}